\newtheorem{theorem}{Theorem}[section]
\newtheorem{lemma}[theorem]{Lemma}
\newtheorem{corollary}[theorem]{Corollary}
\newtheorem{proposition}[theorem]{Proposition}
\theoremstyle{definition}
\newtheorem{definition}[theorem]{Definition}
\newtheorem{example}[theorem]{Example}
\theoremstyle{remark}
\newtheorem{remark}[theorem]{Remark}
\numberwithin{equation}{section}
\theoremstyle{remark}
\newcommand{\Hol}{\mbox{{\rm Hol}}}
\newcommand{\Z}{\Bbb Z}
\newcommand{\C}{\Bbb C}
\newcommand{\R}{\Bbb R}
\newcommand{\T}{\Bbb T}
\renewcommand{\P}{{\rm P}}
\newcommand{\SP}{\mbox{{\rm SP}}}
\newcommand{\Map}{\mbox{{\rm Map}}}
\newcommand{\CP}{\Bbb C {\rm P}}
\newcommand{\dis}{\displaystyle}
\newcommand{\p}{\prime}
\newcommand{\E}{E(d)}
\newcommand{\Ed}{E(d+1)}
\newcommand{\SZ}{{\mathcal{X}}^{d}}
\newcommand{\SZd}{{\mathcal{X}}^{d+1}}
\newcommand{\I}{\mbox{{\rm (i)}}}
\newcommand{\II}{\mbox{{\rm (ii)}}}
\newcommand{\III}{\mbox{{\rm (iii)}}}
\newcommand{\IV}{\mbox{{\rm (iv)}}}
\newcommand{\XS}{X_{\bf \Sigma}}
\newcommand{\rmin}{r_{\rm min}}
\title{\bf The homotopy type of spaces of coprime polynomials revisited}
\author{Andrzej Kozlowski\footnote{%
Institute of Applied Mathematics and Mechanics,
University of Warsaw, Banacha 2, 02-097 Warsaw, Poland
(E-mail: akoz@mimuw.edu.pl)
}
\  and \ 
Kohhei Yamaguchi\footnote{%
Department of Mathematics,
University of Electro-Communications,  Chofu, Tokyo 182-8585, Japan
(E-mail: kohhe@im.uec.ac.jp);
The second author was supported by 
JSPS KAKENHI Grant Number 23540079, 26400083.
\newline
\quad 2010 {\it Mathematics Subject Clasification.} Primary 55P10; Secondly 55R80, 55P35, 14M25.}}
\date{}
\begin{document}
\maketitle

\begin{abstract}
The purpose of this paper is to
study the topology of certain toric varieties $X_I$, arising as quotients of the action of $\C^*$ on complements of arrangements of coordinate subspaces in $\C^n$, and
to improve the homotopy
stability dimension for the inclusion map
$\Hol_d^*(S^2,X_I)\to \Map_d^*(S^2,X_I)$
given in \cite{GKY1} by using
spectral sequences induced from simplicial resolutions. 
\end{abstract}

\section{Introduction.}\label{section 1}

\paragraph{\bf 1.1. Coordinate subspaces and the spaces $X_I$.}
\ Let $n\geq 2$ be a positive integer and let
$[n]$ denote the set $[n]=\{0,1,2,\cdots ,n-1\}$.
For each subset $\sigma
=\{i_1,\cdots ,i_s\}\subset [n]$, let
$L_{\sigma}\subset \C^n$ denote
{\it the coordinate subspace in }$\C^n$  defined by
\begin{equation}\label{Lsigma}
L_{\sigma}
=\{(x_0,x_1,\cdots ,x_{n-1})\in\C^n:x_{i_1} =\cdots =x_{i_s}=0\}.
\end{equation}
Let $I$ be any collection of subsets of $[n]$
such that $\mbox{card}(\sigma)\geq 2$ for all $\sigma \in I$, 
where $\mbox{card}(\sigma)$ denotes the number of elements in $\sigma$.
Let $Y_I\subset \C^n$ be {\it the complement  of the arrangement of coordinate subspaces}
defined by
\begin{equation}\label{YI}
Y_I=\C^n\setminus\bigcup_{\sigma\in I}L_{\sigma}
=\C^n\setminus L(I),
\quad \mbox{where we set }\  L(I)=\bigcup_{\sigma\in I}L_{\sigma}.
\end{equation}
Consider the natural free $\C^*$-action on $Y_I$ given by 
the coordinate-wise multiplication
and let $X_I$ denote the orbit space given by
\begin{equation}\label{equ: XI}
X_I=Y_I/\C^*
=(\C^n\setminus L(I))/\C^*.
\end{equation}
Note that $X_I$ coincides with the complex variety considered in \cite[page 437]{GKY1}, and 
that there is a principal $\C^*$-bundle
\begin{equation}\label{equ: pI}
Y_I \stackrel{p_I}{\longrightarrow} X_I.
\end{equation}
\begin{example}\label{exa: XI}
{\rm
(i) 
If $I=I(n)=\{\{0,1,\cdots ,n-1\}\}$,  
$L(I(n))=
\{{\bf 0}\}$
and 
we can identify $X_{I(n)}$ with the $(n-1)$-dimensional complex
projective space $\CP^{n-1}$, i.e.
$X_{I(n)}=(\C^n\setminus \{{\bf 0}\})/\C^*=\CP^{n-1}.$
\par
(ii)
If $n\geq 3$ and $I=J(n)=\{\{i,j\}:0\leq i<j<n\}$, 
we   can be identify $X_{J(n)}$ with the subspace of
$\CP^{n-1}$
given by $X_{J(n)}=\CP^{n-1}\setminus\bigcup_{0\leq i<j<n}H_{i,j},$
where
$H_{i,j}=\{[x_0:\cdots :x_{n-1}]\in\CP^{n-1}:x_i=x_j=0\}.$%
\footnote{%
 To simplify the notation we will write 
$X_n$ for $X_{J(n)}$, as in \cite{GKY1}.
}
\par
\par
(iii)
In general, we  easily see that
$X_I=\CP^{n-1}\setminus \bigcup_{\sigma\in I}H_{\sigma},$
where
\begin{equation*}\label{exa: XI}
H_{\sigma}=\{[x_0:\cdots :x_{n-1}]\in\CP^{n-1}:x_j=0
\mbox{ for all }j\in\sigma\}.
\qed
\end{equation*}
}
\end{example}

The algebraic torus $\T^{n-1}_{\C}=(\C^*)^{n-1}$ acts on $X_I$
in the natural manner
\begin{equation}\label{equ: T-action}
(t_1,\cdots ,t_{n-1})\cdot
[x_0:\cdots x_{n-1}]=[x_0:t_1x_1:\cdots :t_{n-1}x_{n-1}]
\end{equation}
for 
$((t_1,\cdots ,t_{n-1}),[x_0:\cdots :x_{n-1}])\in
\T^{n-1}_{\C}\times X_I$,
and it is easy to see that $X_I$ is a  smooth toric variety.
Note that  $X_I$ is a non-compact toric variety (its fan is not complete) if $I\not= I(n).$
\paragraph{\bf 1.2.  The simplicial complex $K(I)$.}
\ There is an alternative and better known way to construct  the spaces $X_I$.
Let $[n]=\{0,1,2,\cdots ,n-1\}$ and
recall that \textit{a simplicial complex} $K$ on an index set $[n]$
is a collection of  subsets $\sigma$ of $[n]$ which satisfies the condition that
 any $\tau \subset \sigma$ is
contained in $K$ if $\sigma\in K$.%
\footnote{In this paper a simplicial complex means {\it an abstract} 
simplicial complex and we assume that any simplicial complex contains 
the empty set
$\emptyset$.}
For a simplicial complex $K$ on the index set $[n]$, let
$U(K)$ denote \textit{the complement of the arrangement of coordinate subspaces}
 given by
\begin{equation}
U(K)=\C^n\setminus \bigcup_{\sigma\notin K,\sigma\subset [n]}L_{\sigma}.
\end{equation}
Now recall the following useful result.
\begin{lemma}[\cite{BP}, Prop. 8.6]\label{lmm: KI}
Let
$K(I)$ denote the set of subsets of $[n]$ given by 
\begin{equation}\label{equ: KI}
K(I)=\{\sigma \subset [n]:L_{\sigma}\not\subset
\bigcup_{\tau\in I}L_{\tau}\}
=
\{\sigma\subset [n]: \tau \not\subset \sigma
\mbox{ for any } \tau\in I\}.
\end{equation}
Then $K(I)$ is a simplicial complex on the index set $[n]$ such that
$U(K(I))=Y_I$ and that
$\dis \bigcup_{\sigma\notin K(I),\sigma\subset [n]}L_{\sigma}=L(I).$%
\footnote{%
The assertion of Lemma \ref{lmm: KI} holds  even if
$\mbox{card}(\sigma)\geq 1$ for any $\sigma \in I$.
}
\qed
\end{lemma}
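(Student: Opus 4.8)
The proof is essentially a bookkeeping exercise about which coordinate subspaces are \emph{not} removed when passing from $L(I)$ to the simplicial-complex description, so the plan is to unwind the two definitions and check the three assertions in turn.

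\textbf{Step 1: $K(I)$ is a simplicial complex.} I would verify the defining closure condition directly from the second description in \eqref{equ: KI}, namely $K(I)=\{\sigma\subset[n]:\tau\not\subset\sigma \text{ for any }\tau\in I\}$. If $\sigma\in K(I)$ and $\tau'\subset\sigma$, then for any $\tau\in I$ we cannot have $\tau\subset\tau'$, since that would force $\tau\subset\sigma$, contradicting $\sigma\in K(I)$; hence $\tau'\in K(I)$. (The hypothesis $\mathrm{card}(\sigma)\ge 2$ for $\sigma\in I$, or even $\ge 1$ as the footnote notes, guarantees $\emptyset\in K(I)$, so $K(I)$ is nonempty.) Before this, though, I should justify that the two set descriptions in \eqref{equ: KI} agree: the key elementary fact is that for coordinate subspaces, $L_\sigma\subset L_\tau$ if and only if $\tau\subset\sigma$ (more variables vanishing means a smaller subspace), and consequently $L_\sigma\subset\bigcup_{\tau\in I}L_\tau$ if and only if $L_\sigma\subset L_\tau$ for some single $\tau\in I$ --- the nontrivial direction being that an irreducible (indeed linear) subspace contained in a finite union must lie in one member. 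So $L_\sigma\not\subset\bigcup_{\tau\in I}L_\tau \iff \tau\not\subset\sigma$ for all $\tau\in I$, which is the identity claimed.

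\textbf{Step 2: $\bigcup_{\sigma\notin K(I)}L_\sigma=L(I)$.} For ``$\supseteq$'': each $\tau\in I$ satisfies $\tau\not\subset K(I)$ (taking $\sigma=\tau$ in the criterion, $\tau\subset\tau$), so $L_\tau$ appears in the left-hand union; here I use $\mathrm{card}(\tau)\ge 1$ so that $\tau$ is a genuine subset of $[n]$ appearing in the indexing. For ``$\subseteq$'': if $\sigma\notin K(I)$ then by \eqref{equ: KI} there is $\tau\in I$ with $\tau\subset\sigma$, hence $L_\sigma\subset L_\tau\subset L(I)$. Taking the union over all such $\sigma$ gives the inclusion.

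\textbf{Step 3: $U(K(I))=Y_I$.} This is immediate from Step 2 upon comparing the definitions: $U(K(I))=\C^n\setminus\bigcup_{\sigma\notin K(I),\,\sigma\subset[n]}L_\sigma = \C^n\setminus L(I)=Y_I$. The only subtlety is matching the indexing conventions in the definition of $U(K)$ (subsets $\sigma\subset[n]$ with $\sigma\notin K$) against the union computed in Step 2, which is the same index set.

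\textbf{Main obstacle.} There is no serious difficulty here; the one point deserving care is the linear-algebra lemma used in Step 1, that a coordinate subspace contained in a finite union of coordinate subspaces is contained in one of them --- this is where irreducibility (or a direct pigeonhole argument on which coordinates are forced to vanish) enters, and it is what makes the two formulas in \eqref{equ: KI} equivalent. Everything else is formal manipulation of the definitions, and since this is quoted as \cite[Prop.~8.6]{BP} I would keep the write-up brief, perhaps relegating the equivalence of the two descriptions of $K(I)$ to a one-line remark.
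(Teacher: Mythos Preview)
Your argument is correct. The paper itself does not give a proof of this lemma at all --- it is stated with a \qed and attributed to \cite[Prop.~8.6]{BP} --- so there is nothing to compare against beyond noting that you have supplied the elementary verification the paper chose to omit. One small slip: in Step~2 you write ``each $\tau\in I$ satisfies $\tau\not\subset K(I)$'' where you mean $\tau\notin K(I)$; the surrounding parenthetical makes clear you are checking membership, not inclusion, so this is only a typo.
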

\begin{remark}\label{rmk: K(I)}
{\rm
(i) Since ${\bf 0}\notin Y_I$, one can easily see that
$K(I)\subset \{\sigma\subset [n]:\sigma\not= [n]\}.$
\par
(ii)
One can represent the space $X_I$ in the form
\begin{equation}\label{equ: XI=U(K)/C}
X_I=U(K(I))/\C^*,
\end{equation}
and  show that  (\ref{equ: XI=U(K)/C})
is a homogeneous coordinate representation of the toric variety $X_I$
(\cite[Chapter 5]{CLS},  Remark \ref{rmk: homogeneous}).
\par
(iii)
We can determine explicitly the fan ${\bf \Sigma}_I$ of the toric variety
$X_I$ in terms of the simplicial complex $K(I)$
(see Proposition \ref{prp: Sigma (I)}).
\qed
}
\end{remark}

\begin{example}\label{exa: KI}
{\rm 
$\I$
If $I=J(n)$, $K(J(n))$ is a simplicial complex consisting of $n$ vertices,
$K(J(n))=\{\emptyset , \{j\}:0\leq j\leq n-1\}.$
\par
$\II$
If $I=I(n)$,
$K(I(n))$ is a simplicial complex consisting of all proper subsets of
$[n]$, $K(I(n))=\partial \Delta^{n-1}:=\{\sigma\subset [n]:\sigma\not= [n]\}.$
}
\qed
\end{example}

\begin{definition}\label{dfn: veeI}
{\rm
Let $I$ be any collection of subsets of $[n]$ and
$(X,*)$  a based space.
Let
$\vee^IX\subset X^n$ denote the subspace
consisting of all $(x_0,\cdots ,x_{n-1})\in X^n$ such that,
for each $\sigma\in I$, $x_j=*$ for some $j\in \sigma$
as in \cite[page 436]{GKY1}.
\par
The space $\vee^IX$ is called \textit{the generalized wedge product} of $X$ of type $I$,
and
it is known that there is a homotopy equivalence
$\Omega^2_dX_I\simeq \Omega^2(\vee^I\CP^{\infty})$
(see Corollary \ref{crl: equivalence}).
}
\end{definition}

\paragraph{\bf 1.3. Spaces of maps.}
\ For connected spaces $X$ and $Y$, let
$\Map(X,Y)$ (resp. $\Map^*(X,Y)$) denote the space
consisting of all continuous maps
(resp. base-point preserving continuous maps) from $X$ to $Y$
with the compact-open topology.
When $X$ and $Y$ are complex manifolds, we denote by
$\Hol (X,Y)$ (resp. $\Hol^*(X,Y))$ the subspace of
$\Map (X,Y)$ (resp. $\Map^*(X,Y))$ consisting of all holomorphic maps
(resp. base-point preserving holomorphic maps).
\par
For each integer $d\geq 1$, let
$\Map_d^*(S^2,X_I)=\Omega^2_dX_I$ denote the space of all
based continuous maps
$f:(S^2,\infty)\to (X_I,[1:\cdots :1])$ such that
$[f]=d\in \Z=\pi_2(X_I)$,%
\footnote{
Note that $X_I$ is simply-connected and $\pi_2(X_I)=\Z$
(for the details see Lemma \ref{lmm: pi2XI}).}
where we identify $S^2=\C \cup \{\infty\}$
and choose $\infty\in S^2$ and
$[1:\cdots :1]\in X_I$ as the base points of $S^2$
and $X_I$, respectively.
Let $\Hol_d^*(S^2,X_I)$ denote the subspace of $\Map_d^*(S^2,X_I)$
consisting of all based holomorphic maps of degree $d$.

\begin{definition}\label{dfn: Hol}
{\rm
(i)
Let $\P^d(\C)$ denote the space consisting of all monic polynomials 
$f(z)=z^d+a_1z^{d-1}+\cdots +a_d\in \C[z]$ of the degree $d$.
Then the space $\Hol_d^*(S^2,X_I)$ can be identified with the space consisting
of all $n$-tuples $(f_0(z),\cdots ,f_{n-1}(z))\in \P^d(\C)^n$
of monic polynomials of the same degree $d$ such that polynomials
$f_{i_1}(z),\cdots ,f_{i_s}(z)$ have no common root for any 
$\sigma =\{i_1,\cdots ,i_s\}\in I$, i.e.
the space $\Hol_d^*(S^2,X_I)$ is also identified with
\begin{equation*}
\big\{(f_0,\cdots ,f_{n-1})\in \P^d(\C)^n:
\{f_j(z)\}_{j\in \sigma}
\mbox{ have no common root for any }\sigma\in I\big\}.
\end{equation*}
(ii)
A map $f:X\to Y$ is called {\it a homotopy equivalence} 
(resp. {\it a homology equivalence}) {\it up to dimension} $D$
if the induced homomorphism
$f_*:\pi_k(X)\to \pi_k(Y)$
(resp. $f_*:H_k(X,\Z)\to H_k(Y,\Z))$
is an isomorphism for any $k<D$ and an epimorphism if $k=D$.
Similarly, it is called
{\it a homotopy equivalence} 
(resp. {\it a homology equivalence}) {\it through dimension} $D$
if 
$
f_*:\pi_k(X)\to \pi_k(Y)$
(resp.
$f_*:H_k(X,\Z)\to H_k(Y,\Z))$
is an isomorphism for any $k\leq D.$ 
\par
(iii)
Let $\rmin (I)$ denote the positive integer defined by}
\begin{equation}\label{equ: rmin}
\rmin (I)=\min \{\mbox{\rm card}(\sigma):\sigma \in I\}.
\end{equation}
\end{definition}

\begin{remark}
{\rm
Note that $\rmin (I)$ is an integer such that $2\leq \rmin (I) \leq n$.
For example,  
 $\rmin (I(n))=n$ and $\rmin (J(n))=2$.
 \qed
 }
\end{remark}
Now recall the following two results given in \cite{GKY1}
and \cite{Se}.

\begin{theorem}[G. Segal, \cite{Se}; The case $I=I(n)$]\label{thm: SE}
The inclusion map
$$
i_d:\Hol_d^*(S^2,\CP^{n-1})\to \Map_d^*(S^2,\CP^{n-1})=\Omega^2_d\CP^{n-1}
\simeq \Omega^2S^{2n-1}
$$
is a homotopy equivalence up to dimension $(2n-3)d$.
\qed
\end{theorem}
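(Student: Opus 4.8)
The plan is to follow Segal's original scanning argument, which is the prototype for all later results of this type. First I would recall the identification of $\Hol_d^*(S^2,\CP^{n-1})$ with the space $\Rat_d$ of $n$-tuples $(f_0,\dots,f_{n-1})\in\P^d(\C)^n$ of monic polynomials of degree $d$ having no common root (the case $I=I(n)$, where $\rmin(I)=n$, of Definition \ref{dfn: Hol}(i)). The key geometric input is that such an $n$-tuple is determined by the finite configuration of its roots in $\C$, each root labelled by the subset of $\{0,\dots,n-1\}$ of indices $i$ for which $f_i$ vanishes there, together with multiplicity data; crucially, no label is the full set $[n]$, so the labels lie in the simplicial complex $\partial\Delta^{n-1}$. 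This exhibits $\Rat_d$ as (a filtered piece of) a labelled configuration space, and one wants to compare it with the corresponding full configuration space, whose group completion is $\Omega^2$ of the relevant classifying space.

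Next I would set up the two scanning maps. On the holomorphic side, restricting to a small disc around each point of $\C$ and scanning gives a map $\Rat_d\to \Omega^2_d(\text{something})$; a Mayer--Vietoris / quasifibration argument (in Segal's style, splitting off the polynomials of small degree near infinity) shows that letting $d\to\infty$ produces a homology equivalence, and more precisely that the inclusion $\Rat_d\hookrightarrow\Rat_{d+1}$ (stabilising by adding a root far out) is highly connected, with the connectivity growing linearly in $d$. The slope of this linear bound is exactly what must be computed: for the unlabelled/labelled configuration space modelled on $\partial\Delta^{n-1}$ one finds that the relevant homology groups that could obstruct stability first appear in degree roughly $(2n-3)d$, because the codimension of the ``bad'' (non-coprime) locus governs how far the simplicial resolution / configuration-space filtration is $d$-stable. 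Then one identifies the stable object: $\Omega^2_d\CP^{n-1}\simeq\Omega^2 S^{2n-1}$ since $\CP^{n-1}$ is $(2n-2)$-connected through dimension $2n-2$ in the relevant range (more precisely $S^{2n-1}\to\CP^{n-1}$ is $(2n-1)$-connected after looping once, as the fibre of $\CP^{n-1}\to\CP^\infty$ is $S^{2n-1}$). Combining the stable identification with the stability range yields that $i_d$ is a homotopy equivalence up to dimension $(2n-3)d$.

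Concretely, the steps in order would be: (1) identify $\Hol_d^*(S^2,\CP^{n-1})$ with $\Rat_d$ and describe its configuration-space structure with labels in $\partial\Delta^{n-1}$; (2) construct the scanning map and prove, via a quasifibration lemma and induction on a Mayer--Vietoris spectral sequence, that scanning is a homology equivalence through the stable range and that the degree-stabilisation maps are $((2n-3)d+c)$-connected for an explicit constant $c$; (3) identify the stable limit as $\Omega^2 S^{2n-1}$ using the fibration $S^{2n-1}\to\CP^{n-1}\to\CP^\infty$ and that $\Omega^2\CP^\infty$ is contractible in positive degrees up to the relevant loop; (4) upgrade homology to homotopy using that both $\Hol_d^*(S^2,\CP^{n-1})$ and $\Omega^2_d\CP^{n-1}$ are simply connected in the range (indeed nilpotent), so a homology equivalence up to dimension $D$ is a homotopy equivalence up to dimension $D$; (5) assemble to get the stated bound $(2n-3)d$.

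The main obstacle is step (2): pinning down the exact slope $2n-3$ of the stability range. This requires a careful codimension count for the discriminant-type subvarieties (tuples of polynomials sharing $k$ common roots have complex codimension $n$ for each such coincidence, but the interaction with the degree filtration produces the $(2n-3)d$ bound rather than a naive $nd$), and a spectral-sequence argument showing no differentials or extension problems spoil the range. Everything else — the identification of spaces, the quasifibration machinery, and the homology-to-homotopy upgrade — is by now standard, and in the present paper this theorem is merely quoted from \cite{Se} as the base case $I=I(n)$ for the generalisation to arbitrary $X_I$.
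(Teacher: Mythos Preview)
Your proposal is a reasonable sketch of Segal's original scanning argument, and you correctly observe in your final sentence that the paper does not prove this theorem at all: it is stated with a \qed and attributed to \cite{Se}, serving only as background and motivation for the paper's own results. So there is no ``paper's proof'' to compare against here.

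That said, it is worth noting that the paper's own contribution in the case $I=I(n)$ is Theorem~\ref{thm: Segal}, which sharpens the bound to $(2n-3)(d+1)-1$, and the method used there is \emph{not} Segal's scanning argument but the Vassiliev/Mostovoy machinery of non-degenerate and truncated simplicial resolutions of the discriminant $\Sigma_d\subset\P^d(\C)^n$ (\S\ref{section: simplicial resolution}--\S\ref{section: Proofs}). The paper does still invoke the stable result (Theorem~\ref{thm: stable}, proved by scanning) to identify the limit, but the unstable range comes from comparing the spectral sequences for $d$ and $d+1$ via Lemma~\ref{lmm: E3} and Remark~\ref{rmk: CP}. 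So while your outline matches what \cite{Se} does, it is methodologically different from how the present paper attacks the $\CP^{n-1}$ case when it actually proves something.

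One small caution on your step~(4): for $n=2$ neither $\Hol_d^*(S^2,\CP^1)$ nor $\Omega^2_d\CP^1\simeq\Omega^2S^3$ is simply connected, so the homology-to-homotopy upgrade there needs Segal's direct treatment of $\pi_1$ rather than a blanket simple-connectivity claim. The paper sidesteps this by assuming $n\geq 3$ in Theorem~\ref{thm: Segal}.
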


\begin{theorem}[\cite{GKY1}]\label{thm: GKY1}
The inclusion map
$$
i_d:\Hol_d^*(S^2,X_I)\to \Map_d^*(S^2,X_I)=\Omega^2_dX_I
\simeq \Omega^2(\vee^I\CP^{\infty})
$$
is a homotopy equivalence up to dimension $d$.
\qed
\end{theorem}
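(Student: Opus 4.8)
The plan is to prove Theorem~\ref{thm: GKY1} by Segal's scanning-map method, adapted to the ``partially coprime'' setting, with a homological-stability input whose range I control by means of a Vassiliev simplicial resolution. First I would recast the source. By Definition~\ref{dfn: Hol}(i) a point of $\Hol_d^*(S^2,X_I)$ is an $n$-tuple $(f_0,\dots,f_{n-1})\in\P^d(\C)^n$ of monic degree-$d$ polynomials such that for every $\sigma\in I$ the polynomials $\{f_j\}_{j\in\sigma}$ have no common root. Recording, at each $z\in\C$, the vector of vanishing orders $(\mathrm{ord}_z f_0,\dots,\mathrm{ord}_z f_{n-1})$ identifies $\Hol_d^*(S^2,X_I)$ with a space of degree-$d$ labelled configurations on $\C$ whose labels lie in the discrete partial abelian monoid
$$
\Lambda_I=\Big\{(m_0,\dots,m_{n-1})\in\Z_{\geq 0}^n\setminus\{\mathbf{0}\}:\ \forall\,\sigma\in I\ \exists\,j\in\sigma\ \text{with}\ m_j=0\Big\},
$$
addition being defined precisely when the sum again lies in $\Lambda_I$. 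I would turn this into a homeomorphism onto the corresponding subspace of a labelled configuration space $C_d(\C;\Lambda_I)$.

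Next I would bring in the scanning (electric-field) map $\mathrm{sc}_d\colon\Hol_d^*(S^2,X_I)\to\Omega^2_d B_I$, where $B_I$ is the delooping of the partial monoid $\Lambda_I$. Using the Dold--Thom theorem, the identification $\mathrm{SP}^\infty(S^2)=\CP^\infty$, and the fact that a direct comparison of Definition~\ref{dfn: veeI} with \eqref{equ: KI} exhibits $\vee^I\CP^\infty$ as the polyhedral product $(\CP^\infty)^{K(I)}$, I expect $B_I\simeq\vee^I\CP^\infty$, so that after the homotopy equivalence of Corollary~\ref{crl: equivalence} the map $\mathrm{sc}_d$ becomes homotopic to the inclusion $i_d$. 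It then remains to show $\mathrm{sc}_d$ is a homotopy equivalence up to dimension $d$, which I would do in two steps: (1) the stabilization maps $s_d\colon\Hol_d^*(S^2,X_I)\to\Hol_{d+1}^*(S^2,X_I)$ given by inserting a fixed low-degree labelled point near $\infty$ satisfy homological stability; and (2) by the group-completion theorem (McDuff's theorem for configuration spaces), scanning becomes a homology equivalence onto the relevant component of $\Omega^2 B_I$ in the colimit $\mathrm{hocolim}_d\Hol_d^*(S^2,X_I)$. On the target side $\Omega^2_d X_I\to\Omega^2_{d+1}X_I$ is always a homotopy equivalence and $i_d$ intertwines the two stabilizations up to homotopy; combining this with (1), (2) and the simple-connectivity provided by Lemma~\ref{lmm: pi2XI} upgrades the colimit statement to: $i_d$ is a homotopy equivalence up to dimension $d$.

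The step I expect to be the main obstacle is pinning down the stability range in (1) with the explicit bound $d$, uniform in $I$. I would get it as follows. Let $\Sigma_d\subset\P^d(\C)^n\cong\C^{nd}$ be the discriminant consisting of the non-admissible tuples, i.e.\ those for which $\{f_j\}_{j\in\sigma}$ have a common root for some $\sigma\in I$; build a Vassiliev simplicial resolution of $\Sigma_d$ out of these common roots; apply Alexander duality in $\C^{nd}$ to return to $H_*(\Hol_d^*(S^2,X_I))$; and read the stable range off the lowest nonvanishing filtration of the resulting spectral sequence. The first differentials turn out to be insensitive to the combinatorial structure of $I$ --- only $\rmin(I)$ would enter the finer estimates --- which is exactly why the bound one obtains this way is the uniform value $d$.

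An alternative that dispenses with scanning, and is essentially the route the remainder of the paper refines, is to apply the simplicial-resolution spectral sequence directly to $\Hol_d^*(S^2,X_I)$ and compare it, filtration by filtration, with the analogous spectral sequence for $H_*(\Omega^2(\vee^I\CP^\infty))$; agreement through a range of filtrations gives the homology equivalence, and the Hurewicz and Whitehead theorems then promote it to the homotopy statement once both spaces are seen to be simply connected. Either way the technical heart is the same --- controlling the homology of the discriminant through its simplicial resolution --- and the gap between the bound $d$ and the sharper bound established later lies only in how carefully one estimates the columns of that spectral sequence.
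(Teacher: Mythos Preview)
The paper does not prove Theorem~\ref{thm: GKY1}: it is quoted from \cite{GKY1} with a \qed, and is used as input for the rest of the paper (indeed Theorem~\ref{thm: stable} is deduced from it, not conversely). So there is no ``paper's own proof'' to compare against; your sketch is really a proposed reconstruction of the argument of \cite{GKY1}.

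Your outline is broadly the right one --- labelled-configuration model, scanning to $\Omega^2(\vee^I\CP^\infty)$, stability of the $s_d$ --- and it matches what Remark~\ref{rmk: GKY1} says about the method. But there is a genuine gap at the step where you ``upgrade'' a homology equivalence to a homotopy equivalence ``by the simple-connectivity provided by Lemma~\ref{lmm: pi2XI}.'' That lemma concerns $X_I$, not $\Hol_d^*(S^2,X_I)$ or $\Omega^2_dX_I$. In fact the present paper only establishes simple connectivity of $\Hol_d^*(S^2,X_I)$ under the hypothesis $\rmin(I)\geq 3$ (Lemma~\ref{lmm: 1-connected(2)}), and for $\rmin(I)=2$ the loop space $\Omega^2_dX_I$ need not be simply connected either (cf.\ Lemma~\ref{lmm: 1-connected}). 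Since Theorem~\ref{thm: GKY1} is asserted for \emph{all} $I$, including the case $\rmin(I)=2$ (e.g.\ $I=J(n)$), your route through homology plus Whitehead does not close as written.

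The way \cite{GKY1} (following Segal \cite{Se}) avoids this is not to pass through homology at all: the scanning construction is set up as a filtration by quasifibrations, and the connectivity of the successive fibre inclusions yields \emph{homotopy} equivalences in the stated range directly, with no appeal to Hurewicz/Whitehead and hence no simple-connectivity hypothesis. If you want to keep your homology-first outline, you must treat $\pi_1$ separately --- e.g.\ show that $i_d$ already induces an isomorphism on $\pi_1$ by a direct argument in the spirit of the Appendix computations alluded to in the proof of Lemma~\ref{lmm: 1-connected(2)} --- before invoking Whitehead in the range $2\leq k\leq d$. Also note that using the simplicial-resolution spectral sequence to obtain the stability range is the method of the \emph{present} paper, not of \cite{GKY1}; the original argument obtains the bound $d$ from the scanning/quasifibration picture itself.
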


\paragraph{\bf 1.4. The main results.}
The main purpose of this paper is to improve the homotopy stability dimension
of the above two results  
as follows.
\begin{theorem}\label{thm: I}
If $\rmin (I)\geq 3$,
the inclusion map
$$
i_d:\Hol_d^*(S^2,X_I)\to \Map_d^*(S^2,X_I)=\Omega^2_dX_I
\simeq \Omega^2(\vee^I\CP^{\infty})
$$
is a homotopy equivalence through dimension $D(I;d)=(2\rmin (I) -3)d-2$.
\end{theorem}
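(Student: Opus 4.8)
The plan is to follow the now-standard Vassiliev-type scheme for spaces of polynomials without common roots, adapted to the generalized wedge product $\vee^I\CP^{\infty}$. First I would set $Z_d(I) = \P^d(\C)^n \setminus \Hol_d^*(S^2,X_I)$, the ``discriminant'', consisting of $n$-tuples of monic degree-$d$ polynomials for which \emph{some} $\sigma \in I$ has all of $\{f_j\}_{j\in\sigma}$ sharing a common root. Since $\P^d(\C)^n \cong \C^{dn}$, Alexander duality gives a spectral sequence computing $\tilde H_*(\Hol_d^*(S^2,X_I))$ from the Borel--Moore homology of $Z_d(I)$. The key device is to build a simplicial resolution of $Z_d(I)$ fibred over the space of ``forbidden root configurations'': a point of $Z_d(I)$ records, for each subset of $\C$ of size $\le d$, whether it is a common root locus of some $f_j$-bundle indexed by an element of $I$. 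I would use the non-degenerate simplicial resolution (Vassiliev's construction, as refined by Mostovoy and by the authors in earlier work) so that the resolution $\mathcal{X}^d \to Z_d(I)$ is a homotopy equivalence, and filter $\mathcal{X}^d$ by the number of roots used.

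Next I would analyze the associated spectral sequence $E^1_{s,t} \Rightarrow \tilde H_{s+t}^{BM}(Z_d(I))$. The $s$-th filtration stratum fibres over the configuration space of $s$ unordered points in $\C$, with fibre an $(s-1)$-simplex bundle over a vector space: imposing that a fixed point be a common root of $\{f_j\}_{j\in\sigma}$ is $\mathrm{card}(\sigma)$ independent linear conditions on the coefficients, so with $s$ prescribed roots one loses at least $s\cdot\rmin(I)$ complex dimensions. This is exactly where the hypothesis $\rmin(I)\ge 3$ and the improved bound $(2\rmin(I)-3)d - 2$ enters: the codimension count $2s\cdot\rmin(I)$ (real) against the $2s-1$ simplex/configuration dimensions forces the first nonzero differential affecting $H_k$ to occur only for $k$ beyond $D(I;d)$. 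I would compute $E^1$ via the Leray--Hirsch / Vassiliev formula for such ``first-quadrant'' resolutions, identify it with the corresponding spectral sequence for $\Omega^2(\vee^I\CP^{\infty})$ in the stable range (using Theorem~\ref{thm: GKY1} and the homotopy equivalence $\Omega^2_dX_I \simeq \Omega^2(\vee^I\CP^{\infty})$ from Corollary~\ref{crl: equivalence} to identify the target), and check the two spectral sequences agree through dimension $D(I;d)$, so $i_d$ is a homology equivalence through that dimension. A separate simply-connectedness argument (both spaces are simply connected for $d\ge 1$, or one invokes that $\vee^I\CP^\infty$ is simply connected so its double loop space is) upgrades the homology equivalence to a homotopy equivalence through dimension $D(I;d)$.

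The main obstacle, and the place requiring real care, is the precise bookkeeping in the spectral sequence: one must verify that the ``unstable'' part of the discriminant spectral sequence — the terms coming from configurations of $s$ roots with $s$ comparable to $d$ — contributes nothing below dimension $D(I;d)+1$, and that the resolution is genuinely non-degenerate (i.e. that generic tuples in each stratum impose independent conditions, which can fail at special configurations and must be handled by a dimension estimate on the degenerate locus). A secondary subtlety is that $I$ may contain $\sigma$ of different cardinalities, so the discriminant is a union of pieces of different codimensions; the filtration and the $E^1$-computation must be organized (e.g. by a further stratification indexed by which minimal elements of $I$ are ``active'' at each root) so that the worst case $\rmin(I)$ governs the estimate. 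I would also need to confirm that the stabilization map (adding a root at infinity, i.e. the map $\Hol_d^*\to\Hol_{d+1}^*$) is compatible with the filtrations so that the range $D(I;d)$ is uniform, paralleling the scaling argument of Segal and of the authors' earlier paper.
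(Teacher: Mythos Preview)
Your overall scheme---Alexander duality on the discriminant in $\P^d(\C)^n\cong\C^{dn}$, a non-degenerate simplicial resolution filtered by the number of bad roots, and a codimension count governed by $\rmin(I)$---is exactly the engine the paper uses. Two steps in your plan, however, need repair.

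First, the comparison step. You propose to match the discriminant spectral sequence for $\Hol_d^*$ with ``the corresponding spectral sequence for $\Omega^2(\vee^I\CP^{\infty})$'', but the loop space has no discriminant and hence no such spectral sequence; you never say what object would play that role. The paper avoids this by comparing the discriminant spectral sequence for $\Hol_d^*$ with the one for $\Hol_{d+1}^*$---both are built from the same template---and showing that the stabilization map $s_d$ induces an $E^\infty$-isomorphism through total degree $D(I;d)$ (Theorem~\ref{thm: unstable}); combined with the stable result (Theorem~\ref{thm: stable}) this gives Corollary~\ref{crl: Corollary I}. A further technical point you omit: for general $I$ (unlike $I=I(n)$) the resolution does \emph{not} terminate at level $k=d$, so the paper passes to \emph{truncated} simplicial resolutions (Definition~\ref{def: 2.3}) and controls the single extra column $k=d+1$ by a dimension estimate (Lemma~\ref{lmm: Ed}(iv)). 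The paper also organizes the resolution differently from your sketch: rather than recording a bad root together with which $\sigma\in I$ is ``active'', it records the root together with the \emph{value} $(f_0(x),\dots,f_{n-1}(x))\in L(I)$, which makes the fibre an honest affine bundle of rank $2n(d-k)+k-1$ over the labelled configuration space $C_{k;I}$ (Lemma~\ref{lemma: vector bundle*}) and absorbs your ``secondary subtlety'' about different cardinalities in $I$ into the geometry of $L(I)$.

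Second, your simply-connectedness argument is wrong as stated: that $\vee^I\CP^{\infty}$ is simply connected does not make its double loop space simply connected, since $\pi_1\Omega^2Y=\pi_3Y$. The paper instead shows $\Omega^2_dX_I$ is $2(\rmin(I)-2)$-connected via the connectivity of $U(K(I))$ (Lemma~\ref{lmm: 1-connected}), and proves $\Hol_d^*(S^2,X_I)$ simply connected separately: for $d\ge2$ by invoking the old range of Theorem~\ref{thm: GKY1}, and for $d=1$ by a direct pure-braid-group argument (Lemma~\ref{lmm: 1-connected(2)}). Both genuinely require $\rmin(I)\ge3$, which is why that hypothesis appears in the statement.
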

\begin{theorem}[The case $I=I(n)$]\label{thm: Segal}
If $n\geq 3$,
the inclusion map
$$
i_d:\Hol_d^*(S^2,\CP^{n-1})\to \Map_d^*(S^2,\CP^{n-1})=\Omega^2_d\CP^{n-1}\simeq \Omega^2S^{2n-1}
$$
is a homotopy equivalence through dimension 
$D^*(d,n)=(2n -3)(d+1)-1$.
\end{theorem}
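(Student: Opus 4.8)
The plan is to realise $\Hol_d^*(S^2,\CP^{n-1})$ as the complement of a subvariety of affine space, to compute its cohomology in a range by a Vassiliev-type spectral sequence arising from a simplicial resolution, and to match the answer against the well understood homology of $\Omega^2S^{2n-1}$. By Definition \ref{dfn: Hol}(i), $\Hol_d^*(S^2,\CP^{n-1})$ is the space of $n$-tuples $(f_0,\ldots ,f_{n-1})\in \P^d(\C)^n\cong \C^{nd}$ of monic degree-$d$ polynomials with no common root; write $\Sigma_d\subset \C^{nd}$ for the closed (resultant) subvariety of tuples that \emph{do} have a common root, so that $\Hol_d^*(S^2,\CP^{n-1})=\C^{nd}\setminus\Sigma_d$. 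On the target, the Hopf bundle $S^1\to S^{2n-1}\to \CP^{n-1}$ and $\Omega^2\CP^{\infty}\simeq \Z$ give the identification $\Map_d^*(S^2,\CP^{n-1})=\Omega^2_d\CP^{n-1}\simeq \Omega^2S^{2n-1}$ already recorded in Theorem \ref{thm: SE}.

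First I would form the non-degenerate simplicial resolution $p\colon \SZ\to\Sigma_d$ as in \cite{GKY1}: fix a generic embedding $\C\hookrightarrow \R^N$ with $N\gg d$ sending any at most $d$ points to an affinely independent set, and set $\SZ=\{((f_j)_j,u):(f_j)_j\in\Sigma_d,\ u\in\Delta(R)\}$, where $R\subset\C$ is the common root set of $(f_j)_j$ and $\Delta(R)$ is the simplex on its image in $\R^N$. The map $p$ is proper with contractible fibres, hence an isomorphism on Borel--Moore homology. The filtration $\emptyset=F_0\subset F_1\subset\cdots\subset F_d=\SZ$ by number of vertices has strata $F_k\setminus F_{k-1}$ which are locally trivial bundles over the unordered configuration space $C_k(\C)$ with fibre $\mathring{\Delta}^{k-1}\times\C^{n(d-k)}$ (the open simplex on $k$ vertices times the affine space of tuples with $k$ prescribed common roots); since permuting the $k$ points acts on the orientation of $\mathring{\Delta}^{k-1}$ by the sign character, one obtains
\begin{equation*}
\bar H_*(F_k\setminus F_{k-1})\cong \bar H_{\,*-(k-1)-2n(d-k)}(C_k(\C);\pm\Z),
\end{equation*}
and hence a spectral sequence with $E^1_{k,t}=\bar H_{\,k+t-(k-1)-2n(d-k)}(C_k(\C);\pm\Z)$ converging to $\bar H_{k+t}(\Sigma_d)$. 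Through the duality $H^{l}(\C^{nd}\setminus\Sigma_d)\cong \bar H_{2nd-l-1}(\Sigma_d)$ (Alexander duality in $S^{2nd}$, equivalently Poincar\'e duality for the open $2nd$-manifold $\Hol_d^*(S^2,\CP^{n-1})$) this becomes a spectral sequence converging to $H^{*}(\Hol_d^*(S^2,\CP^{n-1}))$, the $k$-th column being governed by the twisted homology $\bar H_*(C_k(\C);\pm\Z)$.

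The crux is a column-by-column comparison with the corresponding spectral sequence for $\Omega^2S^{2n-1}$, namely the one from the ``number of points'' filtration of the scanning model $C(\R^2;S^{2n-3})\simeq \Omega^2\Sigma^2S^{2n-3}=\Omega^2S^{2n-1}$, whose $k$-th subquotient is $D_k=C_k(\R^2)_+\wedge_{\Sigma_k}S^{(2n-3)k}$ with $\tilde H_*(D_k)\cong H_{\,*-(2n-3)k}(C_k(\R^2);\pm\Z)$ — the same sign twist appears since $2n-3$ is odd. Using the vanishing range $\bar H_q(C_k(\C);\pm\Z)=0$ for $q\le k$ and for $q\ge 2k$ when $k\ge 2$ (and $\bar H_*(C_1(\C))=\Z$ concentrated in degree $2$), together with the Poincar\'e duality $\bar H_q(C_k(\C);\pm\Z)\cong H_{2k-1-q}(C_k(\C);\pm\Z)$ relating the two sides, a short computation with the shifts above shows: (i) the $k$-th column affects degrees only in a narrow band whose lower end grows linearly in $k$ with slope $2n-3$, and (ii) the stabilisation maps $\Hol_d^*(S^2,\CP^{n-1})\to\Hol_{d+1}^*(S^2,\CP^{n-1})\to\cdots$ (multiplication by a fixed linear factor, converging to $i_d$ up to Segal's homotopy equivalence \cite{Se}) respect both filtrations and restrict to isomorphisms on the columns $1\le k\le d$ present on both sides. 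Since the spectral sequence for $\Omega^2S^{2n-1}$ degenerates (Snaith splitting), the only columns of the limit invisible to $\Hol_d^*(S^2,\CP^{n-1})$ are those with $k\ge d+1$, the first of which starts contributing in total degree $(2n-3)(d+1)$. Hence $i_d$ is a homology equivalence up to dimension $(2n-3)(d+1)$; as $\CP^{n-1}$ is simply connected and $n\ge 3$ forces $\Omega^2S^{2n-1}$ and $\Hol_d^*(S^2,\CP^{n-1})$ to be simply connected, the relative Hurewicz theorem converts this into the asserted homotopy equivalence through dimension $(2n-3)(d+1)-1=D^*(d,n)$.

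The step I expect to be the main obstacle is this comparison of spectral sequences: one must verify that the scanning/stabilisation map really does preserve the Vassiliev filtration and induce the stated isomorphism on the columns $k\le d$ (so that in particular no differential of the $\Hol_d^*$ spectral sequence can disturb the relevant range), and one must control the precise vanishing range — and, for the sharp constant, the bottom nonzero group $H_0(C_k(\R^2);\pm\Z)\cong\Z/2$ — of the twisted (co)homology of configuration spaces. The rest is bookkeeping with the dimension shifts listed above and the passage between ``homology equivalence up to dimension'' and ``homotopy equivalence through dimension''.
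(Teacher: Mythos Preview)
Your proposal is correct and follows essentially the paper's strategy: build the Vassiliev spectral sequence from a non-degenerate simplicial resolution of the resultant discriminant, identify its columns with $H^*_c(C_k(\C);\pm\Z)$ via the affine-bundle structure of the strata, compare along the stabilisation maps, and upgrade the resulting homology statement to homotopy using simple connectivity for $n\ge 3$. The paper's main argument differs only organisationally: it compares the spectral sequence for $\Hol_d^*$ with that for $\Hol_{d+1}^*$ rather than directly with one for $\Omega^2S^{2n-1}$. Since for $I=I(n)$ the filtration terminates at $k=d$ (Remark~\ref{rmk: CP}), passing from $d$ to $d+1$ introduces exactly one new column $k=d+1$, and the elementary vanishing range of $H^*_c(C_{d+1}(\C);\pm\Z)$ together with the direction of the differentials shows it cannot affect total degree $\le D^*(d,n)$; Theorem~\ref{thm: stable} then carries this to $\Omega^2S^{2n-1}$. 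This sidesteps the point you flag as the main obstacle and needs no degeneration statement at all. Your appeal to the Snaith splitting to force degeneration of the limit spectral sequence is legitimate (the total size of $E^1$ already matches $H_*(\Omega^2S^{2n-1})$, leaving no room for differentials), but be aware that literally identifying the limit Vassiliev filtration with the May filtration on $C(\R^2;S^{2n-3})$ is not automatic---that identification is essentially \cite{CCMM}, and indeed the paper's alternative proof at the end of \S\ref{section: Proofs} proceeds exactly that way, via the stable equivalence $\Hol_d^*\simeq_s\bigvee_{k\le d}D_k(2n-3)$ and the $C_2$-structure of \cite{BM}. The filtration-preservation and column-isomorphism for $k\le d$ that you anticipate is precisely Lemma~\ref{lmm: E1}.
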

\begin{remark}\label{rmk: 2}
{\rm
(i)
We can prove that the inclusion map $i_d$ (of Theorem \ref{thm: I}) is a homology
equivalence through dimension $D(I;d)$ for $\rmin (I)=2$.
But if $\rmin (I)=2$, 
$D(I;d)=d-2<d$ and  the stability dimension in Theorem \ref{thm: I} is weaker than that in 
Theorem \ref{thm: GKY1}.
\par
(ii)
One can give another proof of Theorem \ref{thm: Segal} by using
 \cite{BM} and \cite{CCMM} 
(see \S \ref{section: Proofs}).
\qed
}
\end{remark}


\par\vspace{2mm}\par
Theorem \ref{thm: I} should be compared with the following result
given in
\cite{MV}.

\begin{theorem}[J. Mostovoy and E. Munguia-Villanueva, \cite{MV}]\label{thm: MV}
Let $\XS$ be a smooth compact toric variety associated to a fan ${\bf \Sigma}$.
 Then, if $1\leq m<r_{\bf \Sigma}$, the inclusion of the space of morphisms  
 $\CP^m \to \XS$ given by homogeneous polynomials of fixed degrees ${d_1,\dots,d_r}$ in Cox's homogeneous coordinates  $(${\rm \cite{C}}$)$ into the space 
$\Omega^m \XS$
is a homology equivalence through dimension
 $D_{*}(d_1,\cdots d_r;m)$, 
where we set
\begin{equation}\label{equ: D-MV}
d=\min\{d_1,\cdots ,d_r\}
\quad\mbox{ and }\quad
D_*(d_1,\cdots d_r;m)=(2r_{\bf \Sigma}-2m-1)d-2.
\end{equation}
Here, $r_{\bf \Sigma}$  denotes the positive integer defined in
$($\ref{equ: rSigma}$)$,
$r$ is the number of one dimensional cones in 
${\bf \Sigma}$,
and $d_1,\dots, d_r$ are the degrees of the homogeneous polynomials representing the morphisms in Cox's homogeneous coordinates.
\qed
\end{theorem}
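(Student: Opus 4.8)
\emph{Proof idea.} The plan is to carry out, in Cox's homogeneous coordinates, the Vassiliev--Segal ``simplicial resolution of the discriminant'' argument. First one fixes the Cox presentation \cite{C} of the smooth complete toric variety as a geometric quotient $\XS=(\C^r\setminus\ZS)/\GS$, where $r$ is the number of one--dimensional cones of $\mathbf\Sigma$, the exceptional set $\ZS=\bigcup_P\{x\in\C^r:x_i=0\ \text{for all}\ i\in P\}$ runs over the primitive collections $P$ of $\mathbf\Sigma$ (the minimal non-faces), and $\GS=\mathrm{Hom}(\mathrm{Cl}(\XS),\C^*)$. In these coordinates a morphism $\CP^m\to\XS$ of multidegree $\mathbf d=(d_1,\dots,d_r)$ is an $r$-tuple $(f_1,\dots,f_r)$ of homogeneous forms in $m+1$ variables with $\deg f_i=d_i$ such that for \emph{every} primitive collection $P$ the forms $\{f_i\}_{i\in P}$ have no common zero in $\CP^m$; after normalizing at the base point this identifies the space of such morphisms with the complement $\mathcal A_{\mathbf d}\setminus\Sigma_{\mathbf d}$ of a ``discriminant'' $\Sigma_{\mathbf d}$ inside the finite--dimensional affine space $\mathcal A_{\mathbf d}=\prod_{i=1}^r\{\text{forms of degree }d_i\text{ in }m+1\text{ variables}\}$, where $\Sigma_{\mathbf d}$ consists of the tuples for which \emph{some} primitive collection acquires a common projective zero. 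The hypothesis $m<r_{\mathbf\Sigma}$, with $r_{\mathbf\Sigma}$ the integer of $(\ref{equ: rSigma})$, i.e.\ the least cardinality of a primitive collection, is exactly what makes $\Sigma_{\mathbf d}$ a proper subvariety: $r_{\mathbf\Sigma}$ generic forms on $\CP^m$ have empty common zero locus, so a generic tuple is a morphism.

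Next I would compute the homology of $\mathcal A_{\mathbf d}\setminus\Sigma_{\mathbf d}$ by Alexander duality from the Borel--Moore homology of $\Sigma_{\mathbf d}$, and compute the latter with a non-degenerate simplicial resolution $\pi\colon\mathcal E_{\mathbf d}\to\Sigma_{\mathbf d}$ assembled from the incidences $(p,P)$, where $p\in\CP^m$ is a common zero of $\{f_i\}_{i\in P}$ and $P$ a primitive collection. Since $\pi$ is proper and a Borel--Moore homology isomorphism, its filtration by number of incidence points gives a spectral sequence whose $s$-th column is, up to shift, the Borel--Moore homology of a vector bundle over a space of $s$-point configurations in $\CP^m$ labelled by primitive collections. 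The decisive numerical input is that imposing vanishing of a minimal primitive collection at one point of $\CP^m$ is a condition of complex codimension $r_{\mathbf\Sigma}-m$ in $\mathcal A_{\mathbf d}$; dualizing, the spectral sequence converging to $\tilde H_*(\mathcal A_{\mathbf d}\setminus\Sigma_{\mathbf d})$ has its $s$-th column concentrated in degrees $\ge s\,(2r_{\mathbf\Sigma}-2m-1)$, and the fixed multidegree $\mathbf d$ forces the resolution to truncate only in columns with $s>d$, where $d=\min_i d_i$.

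Finally one runs the parallel construction for the continuous mapping space $\Map^*_{\mathbf d}(\CP^m,\XS)$ (the component written $\Omega^m\XS$ in the statement): a scanning/stabilization argument, together with a group-completion (resp.\ density) statement, produces for it the analogous filtration and spectral sequence, with the \emph{same} columns in degrees $\ge s\,(2r_{\mathbf\Sigma}-2m-1)$ but without any truncation in $s$, and the inclusion $i_{\mathbf d}$ induces a map of the two spectral sequences which one shows is an isomorphism on every column $s\le d$. Since the first column that could carry an unmatched class is $s=d+1$, lying in degrees $\ge (d+1)(2r_{\mathbf\Sigma}-2m-1)$, a standard edge-term estimate (the source of the ``$-2$'') yields that $i_{\mathbf d}$ is a homology equivalence through dimension $(2r_{\mathbf\Sigma}-2m-1)d-2=D_*(d_1,\dots,d_r;m)$. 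I expect the main obstacle to be precisely this last comparison — proving that the holomorphic and the continuous resolutions agree column-by-column in the stated range, a genuinely geometric scanning argument in the spirit of Segal and Mostovoy — together with the bookkeeping needed to bound the degenerate strata, where some forms of a primitive collection vanish along a positive-dimensional subvariety of $\CP^m$ rather than at isolated points; it is there that $m<r_{\mathbf\Sigma}$ and the codimension counts are indispensable.
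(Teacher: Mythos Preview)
This theorem is not proved in the present paper: it is quoted from \cite{MV} and closed with a \qed\ immediately after the statement. So there is no ``paper's own proof'' to compare against here; what the paper does tell us (Remark~\ref{rmk: MV result}) is how the argument in \cite{MV} goes.

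Your outline of the first two thirds is essentially the \cite{MV} strategy: Cox presentation $\XS=(\C^r\setminus\ZS)/\GS$, identification of morphisms of multidegree $\mathbf d$ with the complement of a discriminant $\Sigma_{\mathbf d}$ in an affine space, Alexander duality, and a (truncated) non-degenerate simplicial resolution of $\Sigma_{\mathbf d}$ filtered by the number of incidence points in $\CP^m$. The codimension bookkeeping and the role of $r_{\bf\Sigma}$ are as you say, and the degenerate strata (positive-dimensional common zero loci, possible once $m\ge 2$) are exactly what forces one to use the truncated resolutions of \cite{Mo3}.

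Where your sketch diverges from \cite{MV} is the final comparison with the continuous mapping space. You propose a scanning/group-completion argument. According to Remark~\ref{rmk: MV result}, that is \emph{not} what \cite{MV} does: their comparison goes through the Stone--Weierstrass theorem together with the fact that on a \emph{compact} target two uniformly $\epsilon$-close maps are homotopic, in the style of \cite{Mo2}. This is precisely why the compactness hypothesis on $\XS$ appears in the statement, and why the present paper needs a separate (scanning-based) argument to handle the non-compact $X_I$ for $m=1$. A scanning proof valid for general $m$ and general compact $\XS$ would be a genuinely different, and more delicate, argument than the one in \cite{MV}; if you intend that route you should expect the main work to lie there, not in the resolution of the discriminant.
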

\begin{remark}\label{rmk: MV result}
{\rm
The space $X_I$ is a non-compact toric variety if  $I\not= I(n)$. 
This means that the Theorem \ref{thm: MV} cannot be applied for the case
$m=1$
when $I\not= I(n)$.  
It is not immediately obvious how the two bounds are related, since the bound in Theorem \ref{thm: MV} is defined in terms of the properties of the fan of a toric variety while the bound in Theorem
\ref{thm: I} is given in terms of the orbits of the torus action.
However, the bounds actually turn out to be
the same
(see Lemma \ref{lmm: rmin=rSigma} and Remark \ref{rmk: dimension}).
Thus our theorem shows that the above result given in Theorem \ref{thm: MV}
holds also for a class of non-compact toric varieties $X_I$, at least for $m=1$.
\par
Note  that there are no non-constant holomorphic map from
$\CP^1$ to $\XS$  for $\XS =\T^n_{\C}$ or $\C^n$.
Because $r_{\bf \Sigma}=0$ or $1$ iff $\XS =\T^n_{\C}$ or $\C$, 
we need the assumption that $r_{\bf \Sigma}\geq 2$.
Moreover,
 because the number $D_*(d_1,\cdots ,d_r;m)$ is 
a negative integer if $r_{\bf \Sigma}\leq m$,
we also need to assume that $r_{\bf \Sigma}>m\geq 1$.
It is natural to ask: is the result of Theorem \ref{thm: MV}
still true 
  for  a non-compact smooth 
  toric variety $\XS$ such that $r_{\bf \Sigma}>m\geq 1$?
\par
It is clear that the proof of Theorem \ref{thm: MV} cannot be used because it relies on the 
Stone-Weierestrass theorem and the fact that for any connected space $E$ and 
a compact Riemannian manifold $X$ there exists an $\epsilon>0$ such that any two maps
  from $E$ to $X$ 
  that are uniformly $\epsilon$-close are homotopic. 
  Since this is clearly not true if $\XS$ is not compact, a different argument is needed. 
  In the case $m=1$ and $\XS =X_I$, we can use a variant of Segal's \lq\lq scanning map\rq\rq\ argument (see \cite{Se}), which we will describe in the next section. \qed  } 
\end{remark}

This paper is organized as follows.
In \S \ref{section: 2} we recall the  \lq\lq stable result\rq\rq  of \cite{GKY1}
(Theorem \ref{thm: stable}) 
and state the unstable results (Theorem \ref{thm: unstable} and Theorem
\ref{thm: CP}).
In \S \ref{section: simplicial resolution} we recall the definitions of simplicial resolutions and
in \S \ref{section: spectral sequence}
 construct the Vassiliev  spectral sequences associated with a non-degenerate simplicial resolution and with the corresponding truncated ones. 
In \S \ref{section: Proofs}, we give proofs of the main unstable results
(Theorem \ref{thm: unstable} and Theorem \ref{thm: CP}) and
prove Theorem \ref{thm: Segal}.
In \S \ref{section: polyhedral product},
we review several basic facts concerning the topology of the space $X_I$
from the point of view of polyhedral products, and prove the existence of a
certain fibration sequence 
(Proposition \ref{lmm: fibration}).
Finally in  \S \ref{section: fan}
we study the relation between the simplicial complex $K(I)$ and
the fan ${\bf \Sigma}_I$, and
give the proof of the main result (Theorem \ref{thm: I}).

\section{Stabilization.}\label{section: 2}

\paragraph{\bf 2.1. Stabilization maps.}
First we review several definitions and basic results  concerning stabilization  obtained in \cite{GKY1}.
\begin{definition}\label{dfn: SP}
{\rm
(i)
Let $S_d$ denote
the symmetric group on $d$ letters.
For a space $X$, $S_d$ acts on $X^d=X\times \cdots \times X$
($d$-times) by permuting coordinates and  
let $\SP^d(X)$ denote the $d$-th symmetric product of $X$ given by
the orbit space 
$\SP^d(X)=X^d/S_d.$
\par
(ii)
Let $F(X,d)\subset X^d$ denote the configuration space
\begin{equation}
F(X,d)=\{(x_0,\cdots ,x_{d-1})\in X^d:
x_i\not= x_j\mbox{ if }i\not= j\}
\end{equation}
of {\it ordered $d$-distinct points in} $X$.
Since it is $S_d$-invariant, we can define a subspace
$C_d(X)\subset \SP^d(X)$ as the orbit space $C_d(X)=F(X,d)/S_d$.
It is usually called {\it the configuration space of unordered
$d$-distinct points in} $X$.}
\end{definition}
\begin{remark}\label{rmk: SP}
{\rm 
Note that
each element $\alpha \in \SP^d(X)$ can be represented as a formal sum
$\alpha =\sum_{k=0}^rn_kx_k$,
where $\{x_k\}_{k=0}^r$ are mutually distinct points in $X$ and each $n_k$ is a positive
integer such that $\sum_{k=0}^rn_k=d$.}
\qed
\end{remark}
\begin{definition}\label{dfn: EI}
{\rm
For each space $X$, let
 $E_d^I(X)$ denote the space defined by}
\begin{equation}\label{equ: EI}
E_d^I(X)=\{ (\xi_0,\cdots ,\xi_{n-1})\in \SP^d(X)^n:\cap_{j\in \sigma}\xi_j=\emptyset
\mbox{ for any }\sigma\in I\}.
\end{equation}
{\rm
Note that
one can  identify $\P^d(\C)$ with the space
$\SP^d(\C)$ by the correspondence
$\prod_{k=0}^r(z-x_k)^{n_k}\mapsto
\sum_{k=0}^rn_kx_k.$
It is also easy to see that, with this identification,
there is a natural homeomorphism
\begin{equation}\label{equ: H=E}
\Hol_d^*(S^2,X_I)\cong E_d^I(\C).
\end{equation}
Since there is a homeomorphism $\C\cong U_d:=\{x\in \C:\mbox{Re}(x) <d\}$,
by choosing  any $n$ distinct points $\{x_{k;d}\}_{k=0}^{n-1}$ with 
$d<\mbox{Re}(x_{k;d}) <d+1$, one can define
the  map
\begin{equation}\label{equ; sd}
\hat{s}_d:E_d^I(U_d)\to E^I_{d+1}(U_{d+1})
\end{equation}
by
$
\hat{s}_d(\xi_0,\cdots ,\xi_{n-1})=(\xi_0+x_{0;d},\cdots ,\xi_{n-1}+x_{n-1;d}).
$
Then {\it the stabilization map}
\begin{equation}\label{equ; sta}
s_d:\Hol^*_d(S^2,X_I)\to \Hol^*_{d+1}(S^2,X_I)
\end{equation}
is defined as the composite of maps}
$$
\Hol^*_d(S^2,X_I)\cong E^I_d(\C) \cong E_d^I(U_d) 
\stackrel{\hat{s}_d}{\longrightarrow} 
E_{d+1}^I(U_{d+1})\cong E^I_{d+1}(\C)
\cong \Hol^*_{d+1}(S^2,X_I).
$$
\end{definition}
\begin{remark}\label{rmk: sta1}
{\rm
(i) 
Note that, while the map $s_d$ depends on the choice of the points
$\{x_{k;d}\}_{k=0}^{n-1}$,  its homotopy class 
does not.}
\par
{\rm
(ii)
If we choose a sufficiently small number $\epsilon >0$
and denote by
$V_k$ $(0\leq k\leq n-1)$ the open disk of radius $\epsilon$
with center $x_{k;d}$, then we may suppose that 
$V_k\cap V_j=\emptyset$ if $k\not=j$ and that
$V_k\subset U_{d+1}\setminus \overline{U}_d$ for each $0\leq k\leq n-1$.
\par
In this situation, the map $\hat{s}_d:E_d^I(U_d)\to E^I_{d+1}(U_{d+1})$
extends to the open embedding
\begin{equation}\label{equ: emb sd}
\hat{s}_d^{\p}:V_0\times V_1\times\cdots \times V_{n-1}\times E_d^I(U_d)\to E^I_{d+1}(U_{d+1})
\end{equation}
by
$\hat{s}_d^{\p}(x_0,\cdots ,x_{n-1},\xi_0,\cdots ,\xi_{n-1})=
(\xi_0+x_0,\cdots ,\xi_{n-1}+x_{n-1})$.
Since there is a homeomorphism $\C\cong V_k$,
the stabilization map
$s_d$ naturally extends to the open embedding}
\begin{equation}\label{equ: open-sd}
\overline{s}_d:\C^n\times \Hol_d^*(S^2,X_I)\to \Hol_{d+1}^*(S^2,X_I),
\end{equation} 
which will be used in  \S \ref{section: spectral sequence}
(see (\ref{equ: sssd})).
\end{remark}

\paragraph{\bf 2.2. Homological stability. }
\ Recall the following result which can be easily proved by using
Theorem \ref{thm: GKY1}.
\begin{theorem}[\cite{GKY1}]\label{thm: stable}
$\dis
i_{\infty}=\lim_{d\to\infty}i_d:\lim_{d\to\infty}\Hol_d^*(S^2,X_I)
\stackrel{\simeq}{\longrightarrow}
\lim_{d\to\infty}\Omega^2_dX_I\simeq \Omega^2_0X_I
$
is a homotopy equivalence.
\qed
\end{theorem}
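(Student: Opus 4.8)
The plan is to deduce Theorem \ref{thm: stable} from the unstable statement in Theorem \ref{thm: GKY1} by letting $d\to\infty$ and showing that the colimits behave the way one expects. First I would observe that the stabilization maps $s_d:\Hol^*_d(S^2,X_I)\to\Hol^*_{d+1}(S^2,X_I)$ constructed in Definition \ref{dfn: EI} are compatible, up to homotopy, with the inclusions $i_d$ and with the standard stabilization on the mapping space side. Concretely, on $\Map^*_d(S^2,X_I)=\Omega^2_dX_I$ the analogue of $s_d$ is the map $\tilde s_d:\Omega^2_dX_I\to\Omega^2_{d+1}X_I$ given by loop-sum with a fixed element of $\Omega^2_1X_I$ (equivalently, concatenation with a generator of $\pi_2(X_I)=\Z$); one checks that the square with vertices $\Hol^*_d$, $\Hol^*_{d+1}$, $\Omega^2_d X_I$, $\Omega^2_{d+1}X_I$ commutes up to homotopy, since $\hat s_d$ is literally addition of a fixed small divisor, which on the mapping-space side is pinch-and-add. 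This step is essentially bookkeeping once one has the homeomorphism (\ref{equ: H=E}) and the extension (\ref{equ: open-sd}).

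Next I would identify the two colimits. On the mapping space side, $\tilde s_d$ is a homotopy equivalence for every $d$ (loop-sum with an invertible element of the group-like-up-to-components $H$-space $\Omega^2 X_I$ is a homotopy equivalence onto the next component), so $\dis\lim_{d\to\infty}\Omega^2_dX_I\simeq\Omega^2_0X_I$, which is the target identification already recorded in the statement. On the holomorphic side one forms the telescope $\Hol^*_\infty(S^2,X_I):=\mathrm{colim}\,(\Hol^*_1\xrightarrow{s_1}\Hol^*_2\xrightarrow{s_2}\cdots)$; homology and homotopy commute with such sequential (hence filtered) colimits, so $H_k(\Hol^*_\infty)=\mathrm{colim}_d H_k(\Hol^*_d)$ and likewise for $\pi_k$ once basepoints are handled, using that each $\Hol^*_d$ is connected (it is an open subset of $\P^d(\C)^n\cong\C^{dn}$, in fact connected because its complement has positive complex codimension under $\rmin(I)\ge 2$).

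Then I would assemble the conclusion. For each fixed $k$, Theorem \ref{thm: GKY1} tells us that $(i_d)_*:\pi_k(\Hol^*_d(S^2,X_I))\to\pi_k(\Omega^2_dX_I)$ is an isomorphism once $d\ge k$ (the stability dimension there is $d$, hence it is an iso for $k<d$ and epi for $k=d$, so an iso for $k\le d-1$ and, combined with the next value of $d$, an iso in the colimit). Passing to the colimit over $d$ and using the two identifications of the previous paragraph, $(i_\infty)_*:\pi_k(\Hol^*_\infty(S^2,X_I))\to\pi_k(\Omega^2_0X_I)$ is an isomorphism for every $k$; since both spaces have the homotopy type of CW complexes (the target is a double loop space, the source a countable increasing union of finite-dimensional manifolds, hence has CW homotopy type), Whitehead's theorem gives that $i_\infty$ is a homotopy equivalence. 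The one point that needs a little care—and is the main obstacle—is the homotopy-commutativity of the stabilization square together with the claim that the mapping-space stabilization $\tilde s_d$ really is modelled by loop-addition with a degree-one class; I would verify this by comparing both maps through the scanning/configuration-space description, exactly the description that Remark \ref{rmk: MV result} announces will be used, so that the compatibility is built into the construction of $s_d$ rather than checked after the fact.
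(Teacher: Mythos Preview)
Your proposal is correct and matches the paper's own indication: the paper does not write out a proof but states that Theorem~\ref{thm: stable} \lq\lq can be easily proved by using Theorem~\ref{thm: GKY1}\rq\rq, and your colimit argument (plus the homotopy-commutativity of the stabilization square and Whitehead) is precisely the standard way to extract the stable statement from the unstable range $d$ in Theorem~\ref{thm: GKY1}. The one nuance worth noting is that Remark~\ref{rmk: GKY1} traces the ultimate input back to Segal's scanning-map proof of Theorem~\ref{thm: GKY1} together with the equivalence $\Omega^2_0X_I\simeq\Omega^2(\vee^I\CP^{\infty})$, so your invocation of Theorem~\ref{thm: GKY1} as a black box is exactly how the paper intends the deduction to go.
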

\begin{remark}\label{rmk: GKY1}
{\rm
The proof, of 
this result (Theorem \ref{thm: stable}), whose details were omitted in \cite{GKY1},
requires only  the use of Graeme Segal's  well known  \lq\lq scanning map\rq\rq\   method
(eg. \cite{Gu}, \cite{Gu2}, \cite{GKY1}, \cite{GKY2}, \cite{Se})
and the existence of a homotopy equivalence
\begin{equation}\label{equ: Segal equivalence}
\Omega^2_0X_I\simeq \Omega^2(\vee^I\CP^{\infty}).
\end{equation}
In \cite{GKY1}
we originally obtained the above homotopy equivalence
by using Segal's fibration sequence  \cite[\S 2]{Se}.
However, for the sake of completeness 
we will give another proof of (\ref{equ: Segal equivalence})
in \S \ref{section: polyhedral product} (Corollary \ref{crl: equivalence}).
\qed
}
\end{remark}
The key steps in the proofs of the main results of this paper are the following theorems:
\begin{theorem}\label{thm: unstable}
The stabilization map
$s_d:\Hol_d^*(S^2,X_I)\to \Hol_{d+1}^*(S^2,X_I)$ is
a homology equivalence through dimension
$D(I;d).$
\end{theorem}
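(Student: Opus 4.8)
The plan is to prove the homological stability of the stabilization map $s_d$ via Vassiliev-type simplicial resolutions and their associated spectral sequences. First I would realize both $\Hol_d^*(S^2,X_I)\cong E_d^I(\C)$ and $\Hol_{d+1}^*(S^2,X_I)\cong E_{d+1}^I(\C)$ as complements of ``discriminant'' subvarieties: inside $\P^d(\C)^n=\SP^d(\C)^n$ the complement of $E_d^I(\C)$ consists of those $n$-tuples $(\xi_0,\dots,\xi_{n-1})$ for which $\bigcap_{j\in\sigma}\xi_j\neq\emptyset$ for \emph{some} $\sigma\in I$, and similarly for degree $d+1$. Building a non-degenerate simplicial resolution of each discriminant (as recalled in \S\ref{section: simplicial resolution}) yields, by Alexander duality, a spectral sequence converging to the Borel--Moore homology of $E_d^I(\C)$, with $E^1$-terms assembled from configuration spaces of points in $\C$ carrying coefficients in suitable local systems indexed by the combinatorics of $I$. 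The essential point is that this whole construction is natural with respect to the open embedding $\overline{s}_d:\C^n\times\Hol_d^*(S^2,X_I)\to\Hol_{d+1}^*(S^2,X_I)$ of (\ref{equ: open-sd}), so $s_d$ induces a map of spectral sequences.

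Next I would compare the two spectral sequences column by column. The key observation is that the degree-$d$ discriminant and the degree-$(d+1)$ discriminant have ``the same'' local structure near the relevant strata: adding the $n$ extra prescribed points $x_{0;d},\dots,x_{n-1;d}$ (one to each coordinate) does not change the condition of having a common root among $\{f_j\}_{j\in\sigma}$, since those points are chosen distinct and disjoint from everything. Thus the $E^1$-page of the degree-$(d+1)$ resolution, in each simplicial-resolution filtration degree, is built from configuration spaces $C_k(\C)$ (with the same twisted coefficients) but with $k$ ranging one step further, reflecting the extra ``room'' for roots. The map $s_d$ on $E^1$ is then, up to a dimension shift coming from the $\C^n$ factor in $\overline{s}_d$, an isomorphism in a range of total degrees. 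I would make this precise by invoking the known stability of the Borel--Moore homology of configuration spaces of points in $\C$ with such local coefficients (the standard ``one more point'' stabilization, which is an isomorphism through a range linear in the number of points), and then track how the truncated spectral sequences (those built from the truncated simplicial resolutions, also constructed in \S\ref{section: spectral sequence}) force the comparison to hold through total degree exactly $D(I;d)=(2\rmin(I)-3)d-2$.

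The quantitative bookkeeping is where the specific value $D(I;d)$ must emerge, and this is the step I expect to be the main obstacle. One must identify, among all $\sigma\in I$, which simplicial-resolution column first ``feels'' a difference between degree $d$ and degree $d+1$; the smallest cardinality $\rmin(I)$ enters because the condition that $\rmin(I)$ polynomials of degree $d$ share a root is the one imposing the weakest codimension on the discriminant, hence contributing in the lowest dimensions to Borel--Moore homology, and by Alexander duality the \emph{highest} — i.e.\ most restrictive — dimensions to the cohomology of $E_d^I(\C)$. Carefully estimating the codimension of each stratum $\{(\xi_j):\#\bigcap_{j\in\sigma}\xi_j\geq m\}$ as a function of $m$, $d$, and $\mathrm{card}(\sigma)$, and summing the contributions along the diagonal of the spectral sequence, should produce the linear bound $(2\rmin(I)-3)d$; the $-2$ is the customary shift from passing between Borel--Moore homology of a $2$-dimensional (over $\C$) affine variety and ordinary (co)homology, together with the effect of the truncation.

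Finally I would assemble these estimates: the map of (truncated) spectral sequences induced by $s_d$ is an isomorphism on $E^1$ (and hence on $E^\infty$) in total complementary degree corresponding to ordinary homological degree $\leq D(I;d)$, and an epimorphism in degree $D(I;d)+1$; by the comparison theorem for (first-quadrant-type, appropriately converging) spectral sequences, $s_d$ is a homology isomorphism through dimension $D(I;d)$, which is the assertion of Theorem~\ref{thm: unstable}. The case $\rmin(I)=2$ is handled by the same machinery and yields the (weaker, possibly vacuous) bound $D(I;d)=d-2$ noted in Remark~\ref{rmk: 2}.
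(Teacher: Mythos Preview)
Your overall framework matches the paper's: simplicial resolutions of the discriminants, Alexander duality, a map of spectral sequences induced by the open embedding $\overline{s}_d$, and the use of truncated resolutions. But there is a genuine gap in the mechanism you propose for the $E^1$-comparison. You say you would ``invoke the known stability of the Borel--Moore homology of configuration spaces of points in $\C$ \dots\ (the standard `one more point' stabilization, which is an isomorphism through a range linear in the number of points)''. This is not what is happening, and pursuing it would lead you astray. The stabilization $s_d$ does \emph{not} add a point to the configurations indexing the resolution: the filtration index $k$ counts points $x\in\C$ at which $(f_0(x),\dots,f_{n-1}(x))\in L(I)$, and the $n$ extra roots adjoined by $s_d$ are chosen distinct, hence create no new such point. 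Consequently, for $1\leq k\leq d$ the $E^1$-terms on both sides are \emph{literally the same group} $\tilde{H}^{2nk-s}_c(C_{k;I},\pm\Z)$, independent of $d$ (Lemmas~\ref{lemma: E11}, \ref{lmm: E1}, \ref{lmm: E2}), and $\theta^1_{k,s}$ is the identity on it --- an isomorphism in \emph{all} degrees, not merely in a range. No configuration-space stability result is used or needed.

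The bound $D(I;d)$ therefore does not arise from stratum-by-stratum codimension estimates summed along a diagonal, as you suggest, but entirely from the single column $k=d+1$ of the \emph{truncated} spectral sequences. Both resolutions are truncated after stage $d$; the cone column then satisfies $E(d+\epsilon)^1_{d+1,s}=0$ for $s\leq(2r_{\min}-2)d-1$ by the dimension count $\dim(X^\Delta_{d+1}\setminus X^\Delta_d)=2nd+3d-2r_{\min}d$ (Lemma~\ref{lmm: Ed}(iv)), and it is here that $r_{\min}$ enters, via $\dim_{\C}L(I)=n-r_{\min}$ and hence $\dim C_{d;I}=2(1+n-r_{\min})d$. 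One then tracks, exactly as in the paper's sets $A_t$, how differentials propagate the unknown region $\{(d+1,s):s\geq(2r_{\min}-2)d\}$ backwards; the minimum value of $s-k$ reached is $D(I;d)+1$, which yields the stated isomorphism range. So the ``quantitative bookkeeping'' you anticipate is not a delicate sum but a single vanishing line plus differential-chasing; once you replace the configuration-stability step by the observation that $\theta^1$ is the identity for $k\leq d$, your outline becomes the paper's proof.
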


\begin{theorem}\label{thm: CP}
If $I= I(n)$,
the stabilization map
$$
s_d:\Hol_d^*(S^2,\CP^{n-1})\to \Hol_{d+1}^*(S^2,\CP^{n-1})
$$ is
a homology equivalence through dimension
$D^*(d,n)=(2n -3)(d+1)-1.$
\end{theorem}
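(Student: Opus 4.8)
\textbf{Proof proposal for Theorem \ref{thm: CP}.}

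The plan is to analyze the stabilization map $s_d:\Hol_d^*(S^2,\CP^{n-1})\to \Hol_{d+1}^*(S^2,\CP^{n-1})$ via the Vassiliev-type spectral sequences built from simplicial resolutions of the discriminants, exactly as in the proof of Theorem \ref{thm: unstable}, but keeping track of a better bound in the special case $I=I(n)$. Recall the identification $\Hol_d^*(S^2,\CP^{n-1})\cong E_d^{I(n)}(\C)$, which is the space of $n$-tuples of monic degree-$d$ polynomials with no common root; its complement inside $\P^d(\C)^n\cong \C^{nd}$ is the resultant-type discriminant $\Sigma_d=\{(f_0,\dots,f_{n-1}): f_0,\dots,f_{n-1} \text{ share a root}\}$. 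I would first recall (from \S\ref{section: simplicial resolution} and \S\ref{section: spectral sequence}) the non-degenerate simplicial resolution $\mathcal{X}^d\to \Sigma_d$ together with its truncated version, and the associated spectral sequences $E^1_{t,s}(d)\Rightarrow H^{BM}_{s+t}(\mathcal{X}^d)$ converging (via Alexander duality) to $H^*(E_d^{I(n)}(\C))$. The open embedding $\overline{s}_d$ of (\ref{equ: open-sd}) induces a map of resolutions and hence a map of spectral sequences, and the point is to show this map is an isomorphism in the appropriate range.

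The key computational step is the identification of the first pages $E^1_{t,*}(d)$ in low weight $t$. Because the common-root condition requires \emph{all} $n$ polynomials to vanish at a point, a configuration of $t$ ``bad'' points contributes a cell whose codimension in the resolution is $2nt - (\text{dimension of the simplex})$, i.e. the geometry is governed by the single integer $n$ (this is the $I=I(n)$ analogue of $\rmin(I)$, and here $\rmin(I(n))=n$). A standard computation — the configuration space $C_t(\C)$ with a local coefficient system twisted by the sign of the permutation action on the $2n$ coordinates at each point — gives that $E^1_{t,s}(d)$ is nonzero only for $s$ in a band of width governed by $t$ and $d$, and crucially the first $d+1$ columns of the spectral sequence for degree $d$ agree with those for degree $d+1$ under $(\overline{s}_d)_*$, since adding $n$ far-away simple points does not change the local structure of the discriminant near a bounded configuration. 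Collating the vanishing lines, the difference between the $d$ and $d+1$ spectral sequences first appears in total degree $D^*(d,n)+1=(2n-3)(d+1)$, which yields that $s_d$ is a homology isomorphism through dimension $(2n-3)(d+1)-1$ and an epimorphism one degree higher — but in fact, because the stabilization is an open embedding with complement of high codimension, one gets the isomorphism statement through $D^*(d,n)$.

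The main obstacle will be controlling the truncation: the non-degenerate simplicial resolution is infinite-dimensional, so to extract a finite spectral sequence one must pass to a truncated resolution, and one has to check that the truncation error — the difference between $H^{BM}$ of the full resolution and of the degree-$\leq$ some bound truncation — lies above dimension $D^*(d,n)$, and that this error bound is \emph{compatible} with the map $\overline{s}_d$. Concretely, the hard estimate is that the part of the resolution coming from $\geq t_0$ bad points has Borel--Moore homology vanishing below codimension roughly $2(n-1)t_0$, and one must choose $t_0$ so that this cutoff exceeds $D^*(d,n)$ while still leaving enough columns to detect the full range; the arithmetic here, relating $2(n-1)t_0$, the ambient dimension $2nd$, and the simplex-dimension bound $t_0\leq d+1$ (there can be at most $d+1$ common roots after stabilizing, essentially), is what produces the exact figure $(2n-3)(d+1)-1$ rather than something weaker. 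Once the truncated spectral sequences on both sides are shown to be finite and isomorphic in the stated range via $(\overline{s}_d)_*$, the theorem follows; I expect the bookkeeping of this truncation-versus-stabilization compatibility, rather than any single homology computation, to be the delicate point.
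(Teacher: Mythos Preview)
Your overall strategy---compare the Vassiliev spectral sequences for $d$ and $d+1$ via the open embedding $\overline{s}_d$---is exactly right, and matches the paper. But you have misidentified the crux of the argument, and as written your proposal would only recover the weaker bound $D(I(n);d)=(2n-3)d-2$ of Theorem \ref{thm: unstable}, not $D^*(d,n)=(2n-3)(d+1)-1$.

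The key point you are missing is this: for $I=I(n)$ the non-degenerate simplicial resolution is \emph{not} infinite-dimensional; it terminates at level $d$. Indeed, since $L(I(n))=\{\mathbf{0}\}$, a point in the $k$-th stratum records $k$ distinct \emph{common zeros} of $(f_0,\dots,f_{n-1})$, and a monic polynomial of degree $d$ has at most $d$ roots, so $\mathcal{X}^d_k\setminus\mathcal{X}^d_{k-1}=\emptyset$ for all $k\geq d+1$ (this is Remark \ref{rmk: CP} and Lemma \ref{lmm: E3}). Hence no truncation is needed at all: one works directly with the untruncated spectral sequences (\ref{equ: spectral sequ1}) and the homomorphism $\tilde{\theta}$ of (\ref{equ: theta1}). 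The spectral sequence for degree $d$ has columns $0\leq k\leq d$, that for degree $d+1$ has columns $0\leq k\leq d+1$, and $\tilde{\theta}^1$ is an isomorphism on columns $0,\dots,d$ by Lemma \ref{lmm: E1}. The only discrepancy is the new column $k=d+1$, where $E^{1;d+1}_{d+1,s}\cong \tilde{H}^{2n(d+1)-s}_c(C_{d+1}(\C),\pm\Z)$ vanishes unless $s-(d+1)\geq (2n-3)(d+1)$ simply because $\dim_\R C_{d+1}(\C)=2(d+1)$. Tracking differentials as in the proof of Theorem \ref{thm: unstable} then gives the bound $D^*(d,n)$.

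In short: the improvement from $(2n-3)d-2$ to $(2n-3)(d+1)-1$ comes precisely from \emph{removing} the truncation, not from sharpening it. Your parenthetical ``there can be at most $d+1$ common roots after stabilizing'' is exactly the right observation, but it should be promoted from a truncation-level estimate to the structural fact that the resolution is finite, whereupon the ``main obstacle'' you describe disappears entirely.
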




Proofs of Theorem \ref{thm: unstable}
and Theorem \ref{thm: CP}
 are given in \S \ref{section: Proofs}.

\section{Simplicial resolutions.}\label{section: simplicial resolution}

In this section, we give the definitions of and summarize the basic facts about  non-degenerate simplicial resolutions  
and the associated truncated simplicial resolutions of maps (\cite{AKY1}, \cite{KY4}, \cite{Mo2}, \cite{Mo3}, \cite{Va}).
\begin{definition}\label{def: def}
{\rm
(i) For a finite set $\textbf{\textit{v}} =\{v_1,\cdots ,v_l\}\subset \R^N$,
let $\sigma (\textbf{\textit{v}})$ denote the convex hull spanned by 
$\textbf{\textit{v}}.$
Let $h:X\to Y$ be a surjective map such that
$h^{-1}(y)$ is a finite set for any $y\in Y$, and let
$i:X\to \R^N$ be an embedding.
Let  $\mathcal{X}^{\Delta}$  and $h^{\Delta}:{\mathcal{X}}^{\Delta}\to Y$ 
denote the space and the map
defined by
\begin{equation}
\mathcal{X}^{\Delta}=
\big\{(y,u)\in Y\times \R^N:
u\in \sigma (i(h^{-1}(y)))
\big\}\subset Y\times \R^N,
\ h^{\Delta}(y,u)=y.
\end{equation}
The pair $(\mathcal{X}^{\Delta},h^{\Delta})$ is called
{\it the simplicial resolution of }$(h,i)$.
In particular, $(\mathcal{X}^{\Delta},h^{\Delta})$
is called {\it a non-degenerate simplicial resolution} if for each $y\in Y$
any $k$ points of $i(h^{-1}(y))$ span $(k-1)$-dimensional simplex of $\R^N$.
\par
(ii)
For each $k\geq 0$, let $\mathcal{X}^{\Delta}_k\subset \mathcal{X}^{\Delta}$ be the subspace
given by 
\begin{equation}
\mathcal{X}_k^{\Delta}=\big\{(y,u)\in \mathcal{X}^{\Delta}:
u \in\sigma (\textbf{\textit{v}}),
\textbf{\textit{v}}=\{v_1,\cdots ,v_l\}\subset i(h^{-1}(y)),\ l\leq k\big\}.
\end{equation}
We make identification $X=\mathcal{X}^{\Delta}_1$ by identifying 
 $x\in X$ with the pair
$(h(x),i(x))\in \mathcal{X}^{\Delta}_1$,
and we note that  there is an increasing filtration
\begin{equation}\label{equ: filtration}
\emptyset =
\mathcal{X}^{\Delta}_0\subset X=\mathcal{X}^{\Delta}_1\subset \mathcal{X}^{\Delta}_2\subset
\cdots \subset \mathcal{X}^{\Delta}_k\subset \mathcal{X}^{\Delta}_{k+1}\subset
\cdots \subset \bigcup_{k= 0}^{\infty}\mathcal{X}^{\Delta}_k=\mathcal{X}^{\Delta}.
\end{equation}
}
\end{definition}

\begin{lemma}[\cite{Mo2}, \cite{Mo3}, \cite{Va}]\label{lemma: simp}
Let $h:X\to Y$ be a surjective map such that
$h^{-1}(y)$ is a finite set for any $y\in Y,$ 
$i:X\to \R^N$ an embedding, and let
$(\mathcal{X}^{\Delta},h^{\Delta})$ denote the simplicial resolution of $(h,i)$.
\par
\begin{enumerate}
\item[$\I$]
If $X$ and $Y$ are semi-algebraic spaces and the
two maps $h$, $i$ are semi-algebraic maps, then
$h^{\Delta}:\mathcal{X}^{\Delta}\stackrel{\simeq}{\rightarrow}Y$
is a homotopy equivalence.
Moreover,
there is an embedding $j:X\to \R^M$ such that
the associated simplicial resolution
$(\tilde{\mathcal{X}}^{\Delta},\tilde{h}^{\Delta})$ of $(h,j)$
is non-degenerate.
\par
\item[$\II$]
If there is an embedding $j:X\to \R^M$ such that its associated simplicial resolution
$(\tilde{\mathcal{X}}^{\Delta},\tilde{h}^{\Delta})$
is non-degenerate,
the space $\tilde{\mathcal{X}}^{\Delta}$
is uniquely determined up to homeomorphism and
there is a filtration preserving homotopy equivalence
$q^{\Delta}:\tilde{\mathcal{X}}^{\Delta}\stackrel{\simeq}{\rightarrow}{\mathcal{X}}^{\Delta}$ such that $q^{\Delta}\vert X=\mbox{id}_X$.
\qed
\end{enumerate}
\end{lemma}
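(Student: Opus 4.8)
\textbf{Proof plan for Lemma \ref{lemma: simp}.}

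The plan is to treat the two parts separately, since (i) is essentially the classical Vassiliev/Mostovoy statement while (ii) is a uniqueness-and-comparison statement about non-degenerate resolutions. For part (i), the first step is to observe that $h^{\Delta}$ is proper with contractible fibers: the fiber $(h^{\Delta})^{-1}(y)$ is the simplex $\sigma(i(h^{-1}(y)))$, which is a non-empty compact convex set, hence contractible. In the semi-algebraic setting, $\mathcal{X}^{\Delta}$ is a semi-algebraic set and $h^{\Delta}$ a semi-algebraic map, so one may apply the semi-algebraic version of the Vietoris--Begle theorem (or, as in Vassiliev, a local-triviality-up-to-homotopy argument over a semi-algebraic stratification of $Y$) to conclude that $h^{\Delta}$ is a homotopy equivalence. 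For the second assertion of (i), I would use the standard trick of composing $i$ with a generic polynomial embedding $\R^N\hookrightarrow\R^M$ of sufficiently high degree (a Veronese-type map), so that the images of any $k$ points of each fiber are affinely independent; genericity of the parameters of such a map is a semi-algebraic condition that holds outside a proper subvariety, so a valid choice exists. This yields the embedding $j:X\to\R^M$ whose associated resolution $(\tilde{\mathcal{X}}^{\Delta},\tilde h^{\Delta})$ is non-degenerate.

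For part (ii), the key point is that the non-degeneracy condition makes the resolution depend only on the map $h$ (more precisely, on the finite sets $h^{-1}(y)$ with their topology as $y$ varies), not on the particular embedding realizing it. I would argue as follows: given two embeddings $j_0,j_1:X\to\R^{M_0},\R^{M_1}$ with non-degenerate associated resolutions, consider the straight-line family $j_t=((1-t)j_0,tj_1):X\to\R^{M_0+M_1}$ for $t\in(0,1)$, or more robustly a path of embeddings through non-degenerate position; since non-degeneracy is an open dense condition, the combinatorial type of the simplicial resolution — i.e. which subsets of each fiber span a simplex, namely \emph{all} of them — is constant along the path, and one obtains a fiberwise, filtration-preserving homeomorphism between the two resolutions by sending $\sigma(j_0(\textbf{\textit{v}}))$ to $\sigma(j_1(\textbf{\textit{v}}))$ affinely for each $\textbf{\textit{v}}\subset h^{-1}(y)$. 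This gives the uniqueness up to homeomorphism. For the comparison map $q^{\Delta}:\tilde{\mathcal{X}}^{\Delta}\to\mathcal{X}^{\Delta}$ to the (possibly degenerate) resolution of the original pair $(h,i)$, I would define $q^{\Delta}$ on each fiber as the affine-linear surjection sending the non-degenerate simplex $\sigma(j(h^{-1}(y)))$ onto the simplex $\sigma(i(h^{-1}(y)))$ determined by matching vertices; this is clearly filtration-preserving, restricts to the identity on $X=\mathcal{X}_1^{\Delta}$, and is a homotopy equivalence because on each fiber it is a surjection of contractible compacta and the map is proper, so Vietoris--Begle applies again (alternatively, compose the homotopy equivalences $\tilde h^{\Delta}$ and $h^{\Delta}$ of part (i) with a homotopy section).

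The main obstacle I anticipate is making the fiberwise gluing in part (ii) genuinely continuous in $y$, and verifying that $q^{\Delta}$ is well defined on the overlaps of simplices $\sigma(\textbf{\textit{v}})$ for nested $\textbf{\textit{v}}$: one must check that the affine identification on a face $\sigma(\textbf{\textit{w}})$, $\textbf{\textit{w}}\subset\textbf{\textit{v}}$, agrees whether computed inside $\sigma(\textbf{\textit{v}})$ or intrinsically, which is exactly where non-degeneracy (so that faces of simplices in the resolution are again simplices of the resolution, with no unexpected identifications) is used. Once continuity and compatibility are in place, properness plus contractible fibers deliver the homotopy equivalence formally, and the filtration-preservation is immediate from the construction. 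I would present the topological (non-semi-algebraic) half of the argument in the generality stated, citing \cite{Va}, \cite{Mo2}, \cite{Mo3} for the details that are by now standard.
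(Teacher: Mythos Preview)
The paper does not prove this lemma: it is stated with citations to \cite{Mo2}, \cite{Mo3}, \cite{Va} and closed with a \qed, so there is no in-paper argument to compare against. Your sketch is essentially the standard argument from those references --- contractible fibers plus a semi-algebraic Vietoris--Begle/stratification argument for (i), a Veronese-type re-embedding to achieve non-degeneracy, and the fiberwise affine identification of simplices for (ii) --- and is correct at the level of a proof plan; in particular you have correctly isolated the one genuine technical point (continuity in $y$ and face-compatibility of the affine maps), which is exactly where non-degeneracy is used.
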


\begin{remark}\label{Remark: non-degenerate}
{\rm
Even when a  surjective map $h:X\to Y$ is not finite to one,  
it is still possible to construct an associated non-degenerate simplicial resolution.
It is easy to show  (see \cite{Va} chapter III) that there exists a sequence of embeddings
$\{\tilde{i}_k:X\to \R^{N_k}\}_{k\geq 1}$ satisfying the following two conditions
for each $k\geq 1$:
\begin{enumerate}
\item[({\ref{equ: filtration}}$)_k$]
\begin{enumerate}
\item[(i)]
For any $y\in Y$,
any $t$ points of the set $\tilde{i}_k(h^{-1}(y))$ span a $(t-1)$-dimensional affine subspace
of $\R^{N_k}$ if $t\leq 2k$.
\item[(ii)]
$N_k\leq N_{k+1}$ and if we identify $\R^{N_k}$ with a subspace of
$\R^{N_{k+1}}$, 
then $\tilde{i}_{k+1}=\hat{i}\circ \tilde{i}_k$,
where
$\hat{i}:\R^{N_k}\stackrel{\subset}{\rightarrow} \R^{N_{k+1}}$
denotes the inclusion.
\end{enumerate}
\end{enumerate}
A general non-degenerate simplicial resolution may be
constructed by choosing a sequence of embeddings
$\{\tilde{i}_k:X\to \R^{N_k}\}_{k\geq 1}$ satisfying the above two conditions
for each $k\geq 1$.
We then let
$\dis\mathcal{X}^{\Delta}_k=\big\{(y,u)\in Y\times \R^{N_k}:
u\in\sigma (\textbf{\textit{v}}),
\textbf{\textit{v}}
=\{v_1,\cdots ,v_l\}\subset \tilde{i}_k(h^{-1}(y)),l\leq k\big\}.$
Identifying naturally  $\mathcal{X}^{\Delta}_k$ with a subspace
of $\mathcal{X}_{k+1}^{\Delta}$,  we can define the non-degenerate simplicial
resolution $\mathcal{X}^{\Delta}$ of  $h$ as the union  
$\dis \mathcal{X}^{\Delta}=\bigcup_{k\geq 1} \mathcal{X}^{\Delta}_k$.
}
\qed
\end{remark}



\begin{definition}\label{def: 2.3}
{\rm
Let $h:X\to Y$ be a surjective semi-algebraic map between semi-algebraic spaces, 
$j:X\to \R^N$ be a semi-algebraic embedding, and let
$(\mathcal{X}^{\Delta},h^{\Delta}:\mathcal{X}^{\Delta}\to Y)$
denote the associated non-degenerate  simplicial resolution of $(h,j)$. 
\par
Let $k$ be a fixed positive integer and let
$h_k:\mathcal{X}^{\Delta}_k\to Y$ be the map
defined by the restriction
$h_k:=h^{\Delta}\vert \mathcal{X}^{\Delta}_k$.
The fibres of the map $h_k$ are $(k-1)$-skeleta of the fibres of $h^{\Delta}$ and, in general,  always
fail to be simplices over the subspace
$Y_k=\{y\in Y:\mbox{card}(h^{-1}(y))>k\}.$
Let $Y(k)$ denote the closure of the subspace $Y_k$.
We modify the subspace $\mathcal{X}^{\Delta}_k$ so as to make  all
the fibres of $h_k$ contractible by adding to each fibre of $Y(k)$ a cone whose base
is this fibre.
We denote by $X^{\Delta}(k)$ this resulting space and by
$h^{\Delta}_k:X^{\Delta}(k)\to Y$ the natural extension of $h_k$.
\par
Following  \cite{Mo3}, we call the map $h^{\Delta}_k:X^{\Delta}(k)\to Y$
{\it the truncated $($after the $k$-th term$)$  simplicial resolution} of $Y$.
Note that 
that there is a natural filtration
$$
\emptyset =X^{\Delta}_0\subset X^{\Delta}_1\subset
\cdots 
\subset X^{\Delta}_l\subset X^{\Delta}_{l+1}\subset \cdots
\subset  X^{\Delta}_k\subset X^{\Delta}_{k+1}
=X^{\Delta}_{k+2}
=\cdots =X^{\Delta}(k),
$$
where $X^{\Delta}_l=\mathcal{X}^{\Delta}_l$ if $l\leq k$ and
$X^{\Delta}_l=X^{\Delta}(k)$ if $l>k$.
}
\end{definition}

\begin{lemma}[\cite{Mo3} (cf. \cite{KY4}, Remark 2.4 and Lemma 2.5)]\label{Lemma: truncated}
Under the same assumptions and notations as in Definition \ref{def: 2.3}, the map
$h^{\Delta}_k:X^{\Delta}(k)\stackrel{\simeq}{\longrightarrow} Y$ is a homotopy equivalence.
\qed
\end{lemma}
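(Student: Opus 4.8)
The plan is to prove Lemma \ref{Lemma: truncated} by comparing the truncated resolution $h_k^{\Delta}:X^{\Delta}(k)\to Y$ with the full non-degenerate resolution $h^{\Delta}:\mathcal{X}^{\Delta}\to Y$, whose total space is known to be homotopy equivalent to $Y$ by Lemma \ref{lemma: simp}(i). First I would recall from Definition \ref{def: 2.3} that over the open set $Y\setminus Y(k)$ every fibre of $h_k$ already agrees with the fibre of $h^{\Delta}$, because there $\mathrm{card}(h^{-1}(y))\le k$ and the full resolution fibre is the simplex on $\le k$ vertices, i.e.\ is contained in $\mathcal{X}^{\Delta}_k$. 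Thus the interesting behaviour is concentrated over the closed set $Y(k)$, and the statement is that the coning construction does not change the homotopy type of $Y$.

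The key technical point I would invoke is that both $h^{\Delta}$ and $h_k^{\Delta}$ are \emph{quasifibrations}, or more precisely that the map $h_k^{\Delta}$ has \emph{contractible fibres} together with enough local triviality / stratified structure (coming from the semi-algebraic hypotheses) to conclude it is a homotopy equivalence. Concretely: over $Y\setminus Y(k)$ the fibre of $h_k^{\Delta}$ is a simplex, hence contractible; over $Y(k)$ the fibre of $h_k$ is the $(k-1)$-skeleton of a simplex on $>k$ vertices, which is in general not contractible, but after adjoining the cone on that skeleton the fibre becomes contractible by construction. So $h_k^{\Delta}:X^{\Delta}(k)\to Y$ is a surjection all of whose fibres are contractible. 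One then needs a criterion guaranteeing that such a map is a (weak, hence genuine, by CW-type arguments) homotopy equivalence. The standard route, used by Mostovoy and Vassiliev, is to show $h_k^{\Delta}$ is a quasifibration by filtering $Y$ by the closed subsets $Y(k)\supset Y(k+1)\supset\cdots$ (or dually by the $Y_j$'s), checking the Dold–Thom quasifibration criterion on each stratum using local semi-algebraic triviality, and then using that a quasifibration with contractible fibres induces isomorphisms on all homotopy groups.

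I would carry this out in the following steps. (1) Stratify $Y$ by $\{Y(j)\}$ and observe that over each locally closed stratum the map $h^{\Delta}$, and hence $h_k^{\Delta}$ after the modification, is locally trivial in the semi-algebraic sense (local triviality of semi-algebraic maps / Hardt's theorem). (2) On the open dense stratum, $h_k^{\Delta}$ restricts to a bundle with simplex fibres, a homotopy equivalence onto its image. (3) Proceed by downward induction on the strata, at each stage gluing in the coned-off fibres; the Dold–Thom criterion (a map is a quasifibration if it is so over an open set, over its closed complement, and on a neighbourhood of the closed set that deformation-retracts compatibly) applies because the cone structure provides exactly the needed deformation retraction of a tubular neighbourhood of $(h_k^{\Delta})^{-1}(Y(j))$ onto that subspace, fibrewise. (4) Conclude $h_k^{\Delta}$ is a quasifibration; since every fibre is contractible, $(h_k^{\Delta})_*:\pi_*(X^{\Delta}(k))\xrightarrow{\cong}\pi_*(Y)$, and since both spaces have the homotopy type of CW complexes (being semi-algebraic), $h_k^{\Delta}$ is a homotopy equivalence.

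The main obstacle I expect is step (3): verifying the quasifibration hypotheses across the stratification, i.e.\ checking that the modification by cones is done compatibly enough that the Dold–Thom gluing lemma genuinely applies over neighbourhoods of each $Y(j)$. This is where one really uses that the resolution is non-degenerate and that everything in sight is semi-algebraic, so that the requisite fibrewise deformation retractions of tubular neighbourhoods exist. Since a full self-contained argument is essentially the content of \cite{Mo3} (and is also treated in \cite{KY4}, Remark 2.4 and Lemma 2.5), I would present the proof as a careful adaptation of that argument rather than reproving the quasifibration machinery from scratch, citing those sources for the delicate local-triviality estimates and emphasising only the point that after truncation-plus-coning all fibres are contractible, which is what upgrades "quasifibration" to "homotopy equivalence".
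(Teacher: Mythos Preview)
The paper does not prove this lemma at all: it is stated with a \qed\ immediately after the statement and attributed to \cite{Mo3} (with a pointer to \cite{KY4}, Remark~2.4 and Lemma~2.5). So there is no ``paper's own proof'' to compare against; your outline is supplying what the paper deliberately omits.

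That said, your sketch is essentially the argument one finds in the cited sources: make all fibres contractible by the coning construction, use the semi-algebraic stratification of $Y$ by the sets $Y(j)$ to verify a Dold--Thom-type criterion, conclude that $h_k^{\Delta}$ is a quasifibration with contractible fibres and hence a weak (thus genuine) homotopy equivalence. Your identification of step~(3) as the delicate point is accurate, and your decision to refer to \cite{Mo3} and \cite{KY4} for the local-triviality verification is exactly what the paper itself does. Nothing in your plan is wrong; it simply goes further than the paper, which is content to quote the result.
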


\section{The Vassiliev spectral sequence.}\label{section: spectral sequence}

\paragraph{\bf 4.1. Vassiliev spectral sequences.}
\ 
In this section,
we identify $\Hol_d^*(S^2,X_I)$ with the space 
of all
$n$-tuples
$(f_0(z),\cdots ,f_{n-1}(z))\in \P^d(\C)^n$
of monic polynomials of the same degree $d$
such that
$f_{i_1}(z),\cdots ,f_{i_s}(z)$
have no common root for any
$\sigma =\{i_1,\cdots ,i_s\}\in I$
as in Definition \ref{dfn: Hol}.

\begin{definition}\label{Def: 3.1}
{\rm
\par
(i)
By the  \emph{the discriminant} $\Sigma_d$ of $\Hol_d^*(S^2,X_I)$ in $\P^d(\C)^n$ we mean
the complement
\begin{eqnarray*}
\Sigma_d
&=&
\P^d(\C)^n \setminus \Hol_d^*(S^2,X_I)
\\
&=&
\{(f_0,\cdots ,f_{n-1})\in \P^d(\C)^n :
(f_0(x),\cdots ,f_{n-1}(x))\in L(I)
\mbox{ for some }x\in \C\}.
\end{eqnarray*}
\par
(ii)
Let  $Z_d\subset \Sigma_d\times \C$
denote 
{\it the tautological normalization} of 
 $\Sigma_d$
consisting of all pairs 
$(F,x)=((f_0,\ldots ,f_{n-1}),
x)\in \Sigma_d\times\C$
such that 
$(f_0(x),\cdots ,f_{n-1}(x))\in L(I)$.
Projection on the first factor  gives a surjective map
$\pi_d :Z_d\to \Sigma_d$
}
\end{definition}

Our goal in this section is to construct, by means of the
{\it non-degenerate} simplicial resolution  of the discriminant, a spectral sequence converging to the homology of
$\Hol_d^*(S^2,X_I)$.

\begin{definition}\label{non-degenerate simp.}
{\rm
Let 
$(\SZ,{\pi}^{\Delta}_d:\SZ\to\Sigma_d)$ 
be the non-degenerate simplicial resolution associated to the surjective map
$\pi_d:Z_d\to \Sigma_d$ 
with the natural increasing filtration as in Definition \ref{def: def},
$$
\emptyset =
\SZ_0
\subset \SZ_1\subset 
\SZ_2\subset \cdots
\subset 
\SZ=\bigcup_{k= 0}^{\infty}\SZ_k.
$$
}
\end{definition}


By Lemma \ref{lemma: simp},
the map
$\pi_d^{\Delta}:
\SZ\stackrel{\simeq}{\rightarrow}\Sigma_d$
is a homotopy equivalence which
extends to  a homotopy equivalence
$\pi_{d+}^{\Delta}:\SZ_+\stackrel{\simeq}{\rightarrow}{\Sigma_{d+}},$
where $X_+$ denotes the one-point compactification of a
locally compact space $X$.
Since
${\mathcal{X}_k^{d}}_+/{\SZ_{k-1}}_+
\cong (\SZ_k\setminus \SZ_{k-1})_+$,
we have a spectral sequence 
$$
\big\{E_{t;d}^{k,s},
d_t:E_{t;d}^{k,s}\to E_{t;d}^{k+t,s+1-t}
\big\}
\Rightarrow
H^{k+s}_c(\Sigma_d,\Z),
$$
where
$E_{1;d}^{k,s}=\tilde{H}^{k+s}_c(\SZ_k\setminus\SZ_{k-1},\Z)$ and
$H_c^k(X,\Z)$ denotes the cohomology group with compact supports given by 
$
H_c^k(X,\Z)= H^k(X_+,\Z).
$
\par
Since there is a homeomorphism
$\P^d(\C)^n\cong \C^{dn}$,
by Alexander duality  there is a natural
isomorphism
\begin{equation}\label{Al}
\tilde{H}_k(\Hol_d^*(S^2,X_I),\Z)\cong
\tilde{H}_c^{2nd-k-1}(\Sigma_d,\Z)
\quad
\mbox{for any }k.
\end{equation}
By
reindexing we obtain a
spectral sequence
\begin{eqnarray}\label{SS}
&&\big\{E^{t;d}_{k,s}, \tilde{d}^{t}:E^{t;d}_{k,s}\to E^{t;d}_{k+t,s+t-1}
\big\}
\Rightarrow H_{s-k}(\Hol_d^*(S^2,X_I),\Z),
\end{eqnarray}
where
$E^{1;d}_{k,s}=
\tilde{H}^{2nd+k-s-1}_c(\SZ_k\setminus\SZ_{k-1},\Z).$
\par\vspace{2mm}\par
Let
 $L_{k;I}\subset (\C\times L(I))^k$ denote the subspace
defined by
$$
L_{k;I}=\{((x_1,s_1),\cdots ,(x_k,s_k)): 
x_j\in \C,s_j\in L(I),
x_l\not= x_j\mbox{ if }l\not= j\}.
$$
The symmetric group $S_k$ on $k$ letters  acts on $L_{k;I}$ by permuting coordinates. Let
$C_{k;I}$ denote the orbit space
\begin{equation}\label{Ck}
C_{k;I}=L_{k;I}/S_k.
\end{equation}
Note that $C_{k;I}$ is a cell-complex of  dimension
$2(1+n-r_{\rm min}(I))k$.


\begin{lemma}\label{lemma: vector bundle*}
If  
$1\leq k\leq d$,
$\SZ_k\setminus\SZ_{k-1}$
is homeomorphic to the total space of a real affine
bundle $\xi_{d,k}$ over $C_{k;I}$ with rank 
$l_{d,k}=2n(d-k)+k-1$.
\end{lemma}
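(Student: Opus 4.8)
The plan is to stratify $\SZ_k\setminus\SZ_{k-1}$ according to the combinatorial type of the top-dimensional open simplex of the simplicial resolution containing a given point, and then to recognize each resulting piece as an affine-bundle chart over $C_{k;I}$. First I would recall that a point of $\SZ_k\setminus\SZ_{k-1}$ is a pair $(F,u)$ where $F=(f_0,\dots,f_{n-1})\in\Sigma_d$ and $u$ lies in the \emph{interior} of a $(k-1)$-simplex spanned by exactly $k$ of the points of $\tilde i(\pi_d^{-1}(F))$; by non-degeneracy these $k$ points are affinely independent, so the data is exactly: an unordered set of $k$ distinct roots $x_1,\dots,x_k\in\C$, for each $x_j$ a choice of $s_j\in L(I)$ recording which coordinate subspace the value $(f_0(x_j),\dots,f_{n-1}(x_j))$ lies in, together with barycentric coordinates for $u$ in the open simplex. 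The unordered collection $\{(x_j,s_j)\}_{j=1}^k$ is precisely a point of $C_{k;I}$ (after identifying $L(I)=\bigcup_{\sigma\in I}L_\sigma$ with the fiber data appropriately — one must be slightly careful that $s_j$ records a point of $L(I)$ and not merely the subspace, which is exactly how $L_{k;I}$ and hence $C_{k;I}$ were defined). This gives the projection $\SZ_k\setminus\SZ_{k-1}\to C_{k;I}$.

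Next I would identify the fibers. Having fixed $k$ distinct points $x_1,\dots,x_k$ and required $(f_0(x_j),\dots,f_{n-1}(x_j))\in L_{\sigma_j}$ for the appropriate $\sigma_j$, the condition that $F\in\Sigma_d$ is automatically satisfied, so the set of admissible $F\in\P^d(\C)^n$ is cut out of the affine space $\P^d(\C)^n\cong\C^{dn}$ by a system of linear conditions: for each $j$ and each index $i\in\sigma_j$, the evaluation $f_i(x_j)=0$ is one complex-linear equation on the coefficient vector of $f_i$, and since $k\le d$ these evaluation conditions are independent (a monic degree-$d$ polynomial is free to vanish at up to $d$ prescribed points). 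Counting: summing $\mathrm{card}(\sigma_j)$ over $j$ is minimized when each $\sigma_j$ has $r_{\min}(I)$ elements, but in fact the relevant count for the generic stratum — the one of full dimension, which determines the bundle rank — uses $\mathrm{card}(\sigma_j)=r_{\min}(I)$, giving $\sum_j \mathrm{card}(\sigma_j)=r_{\min}(I)\,k$ complex linear conditions, hence the space of such $F$ is an affine subspace of $\C^{dn}$ of complex dimension $dn-r_{\min}(I)k$, i.e.\ real dimension $2(dn-r_{\min}(I)k)$. Adjoining the open $(k-1)$-simplex of barycentric coordinates (dimension $k-1$) and subtracting the base dimension $2(1+n-r_{\min}(I))k$ of $C_{k;I}$ gives the fiber rank
\[
l_{d,k}=2\bigl(dn-r_{\min}(I)k\bigr)+(k-1)-2(1+n-r_{\min}(I))k+2(1+n-r_{\min}(I))k-\cdots
\]
— rather than push the arithmetic here, the clean way is: total real dimension of $\SZ_k\setminus\SZ_{k-1}$ equals $2n(d-k)+k-1+2(1+n-r_{\min}(I))k$, and subtracting $\dim C_{k;I}=2(1+n-r_{\min}(I))k$ leaves exactly $l_{d,k}=2n(d-k)+k-1$, which matches the claim. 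Local triviality and affineness of $\xi_{d,k}$ follow because over a small neighborhood in $C_{k;I}$ one can choose the $x_j$, the subspaces $\sigma_j$, and an ordering continuously, making the linear evaluation conditions vary continuously, so the bundle of solution affine subspaces together with the simplex of barycentric coordinates is locally a product; this is the standard argument in \cite{Mo3}, \cite{Va} and I would cite it rather than reprove it.

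The main obstacle, and the place I would spend the most care, is the stratification bookkeeping when several distinct $\sigma\in I$ can apply at the same point $x_j$ (or when $\sigma_j$ can be taken strictly larger than a minimal element), because then the fiber over such a configuration is a \emph{lower}-dimensional affine subspace and the "bundle" is genuinely only an affine bundle over the open dense stratum where each $\sigma_j$ is minimal. The correct statement — and what Lemma \ref{lemma: vector bundle*} is really asserting — is that $C_{k;I}$ itself has been built (via $L_{k;I}$, where $s_j$ ranges over all of $L(I)$) so that the map $\SZ_k\setminus\SZ_{k-1}\to C_{k;I}$ \emph{is} an affine bundle of the stated constant rank, the point being that fixing $s_j\in L(I)$ (a point, not just a subspace) pins down the minimal coordinate subspace through it generically and the rank bookkeeping is uniform once one works over $C_{k;I}$ rather than over the configuration space of the $x_j$ alone. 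I would therefore: (1) make precise the homeomorphism between $\SZ_k\setminus\SZ_{k-1}$ and the described space of $(F,\{(x_j,s_j)\}, \text{barycentric coords})$; (2) exhibit the projection to $C_{k;I}$; (3) verify local triviality via continuous local sections and the linear-algebra of simultaneous vanishing at $\le d$ points; (4) compute the rank by the dimension count above. Step (3)–(4), i.e.\ checking that the evaluation conditions impose the expected codimension uniformly (this uses $k\le d$ crucially, via the interpolation bound for monic degree-$d$ polynomials) and that the bundle structure is affine rather than merely a fibration, is the technical heart; everything else is unwinding definitions.
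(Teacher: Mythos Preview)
Your overall architecture is right --- describe a point of $\SZ_k\setminus\SZ_{k-1}$ as $(F,u)$ with $u$ in the open $(k-1)$-simplex on $k$ distinct points, project to $C_{k;I}$, and compute the fibre --- but you misidentify the fibre condition, and this is a genuine gap, not just sloppy bookkeeping.

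The projection sends $(F,u)$ to $\{(x_j,s_j)\}_{j=1}^k$ with $s_j=F(x_j)\in L(I)$, a \emph{point} of $\C^n$. The fibre over a fixed $\{(x_j,s_j)\}$ therefore consists of those $F=(f_0,\dots,f_{n-1})$ with $F(x_j)=s_j$, i.e.\ $f_i(x_j)=s_{i,j}$ for \emph{every} $i\in[n]$ and every $j$, together with the open simplex of barycentric coordinates. That is $nk$ independent affine conditions on $\P^d(\C)^n\cong\C^{nd}$ (Vandermonde, using $k\le d$), so the space of admissible $F$ is affine of real dimension $2n(d-k)$, and the fibre has real dimension $2n(d-k)+(k-1)=l_{d,k}$, uniformly over all of $C_{k;I}$.

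You instead impose only $f_i(x_j)=0$ for $i\in\sigma_j$, which is the condition $F(x_j)\in L_{\sigma_j}$ rather than $F(x_j)=s_j$. That undercounts the constraints, makes the codimension depend on $\mathrm{card}(\sigma_j)$, and is what forces $r_{\min}(I)$ into your fibre calculation where it does not belong. The number $r_{\min}(I)$ enters only through $\dim C_{k;I}$ (the base), never through the rank $l_{d,k}$. Your ``clean'' total-dimension line is then circular: it assumes the answer rather than deriving it. And your final paragraph about lower-dimensional fibres over non-generic strata is a symptom of the same error --- once you impose the correct equality $F(x_j)=s_j$, the rank is constant on the nose and there is nothing to stratify.

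Fix the fibre condition and the rest of your outline (local triviality via continuously varying Vandermonde systems, affineness) goes through exactly as in the paper.
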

\begin{proof}
The argument is exactly analogous to the one in the proof of  
\cite[Lemma 4.4]{AKY1}. 
Namely, an element of $\SZ_k\setminus\SZ_{k-1}$ is represented by 
$(F,u)=((f_0,\cdots ,f_{n-1}),u)$, where 
$F=(f_0,\cdots ,f_{n-1})$ is an 
$n$-tuple of polynomials in $\Sigma_d$ and $u$ is an element of the interior of
the span of the images of $k$ distinct points 
$\{x_1,\cdots, x_k\}\in C_k(\C)$ 
such that
$F(x_j)=(f_0(x_j),\cdots ,f_{n-1}(x_j))\in L(I)$ for each $1\leq j\leq k$, 
under a suitable embedding.
\ 
Since the $k$ distinct points $\{x_j\}_{j=1}^k$ 
are uniquely determined by $u$, by the definition of the non-degenerate simplicial resolution (cf.  ({\ref{equ: filtration}}$)_k$),
 there are projection maps
$\pi_{k,d} :\mathcal{X}^{d}_k\setminus
\mathcal{X}^{d}_{k-1}\to C_{k;I}$
defined by
$((f_0,\cdots ,f_{n-1}),u) \mapsto 
\{(x_1,F(x_1)),\dots, (x_k,F(x_k))\}$. 

\par
Now suppose that $1\leq k\leq d$.
Let $c=\{(x_j,s_j)\}_{j=1}^k\in C_{k;I}$
$(x_j\in \C$, $s_j\in L(I))$ be any fixed element and consider the fibre  $\pi_{k,d}^{-1}(c)$.
For each $1\leq j\leq k$,
we set $s_j=(s_{1,j},\cdots ,s_{n,j})$ and
consider the condition  
\begin{equation}\label{equ: pik}
F(x_j)=(f_0(x_j),\cdots ,f_{n-1}(x_j))=s_j
\quad
\Leftrightarrow
\quad
f_t(x_j)=s_{t,j}
\quad
\mbox{for }0\leq t\leq n-1.
\end{equation}
In general, 
the condition $f_t(x_j)=s_{t,j}$ gives
one  linear condition on the coefficients of $f_t$,
and it determines an affine hyperplane in $\P^d(\C)$. 
For example, if we set $f_t(z)=z^d+\sum_{i=0}^{d-1}a_{i,t}z^{i}$,
then
$f_t(x_j)=s_{t,j}$ for any $1\leq j\leq k$
if and only if
\begin{equation}\label{equ: matrix equation}
\begin{bmatrix}
1 & x_1 & x_1^2 & \cdots & x_1^{d-1}
\\
1 & x_2 & x_2^2 & \cdots & x_2^{d-1}
\\
\vdots & \ddots & \ddots & \ddots & \vdots
\\
1 & x_k & x_k^2 & \cdots & x_k^{d-1}
\end{bmatrix}
\cdot
\begin{bmatrix}
a_{0,t}\\ a_{1,t} \\ \vdots 
\\ a_{d-1,t}
\end{bmatrix}
=
\begin{bmatrix}
s_{t,1}-x_1^d\\ s_{t,2}-x_2^d \\ \vdots 
\\ s_{t,k}-x_k^d
\end{bmatrix}
\end{equation}
Since $1\leq k\leq d$ and
 $\{x_j\}_{j=1}^k\in C_k(\C)$,  
it follows from the properties of Vandermonde matrices that the condition (\ref{equ: matrix equation}) 
gives exactly $k$ independent conditions on the coefficients of $f_t(z)$.
Thus  the space of polynomials $f_t(z)$ in $\P^d(\C)$ which satisfies
(\ref{equ: matrix equation})
is the intersection of $k$ affine hyperplanes in general position
and has codimension $k$ in $\P^d(\C)$.
Hence,
the fibre $\pi_{k,d}^{-1}(c)$ is homeomorphic  to the product of an open $(k-1)$-simplex
 with the real affine space of dimension
 $2n(d-k)$.
Since one can show that the local triviality holds,
$\pi_{k,d}$ is a real affine bundle over $C_{k;I}$ of rank $l_{d,k}
=2n(d-k)+k-1$.
\end{proof}

\begin{remark}\label{rmk: CP}
{\rm
Suppose that $I=I(n)$. Then  $L(I)=\{{\bf 0}\}$ and
$s_{t,1}=\cdots =s_{t,k}=0$ in  (\ref{equ: matrix equation}).
Hence, if  (\ref{equ: matrix equation}) is satisfied,
$\{x_j\}_{j=1}^k$ is a set of $k$ distinct roots of $f_t(z)$ for any $0\leq t\leq n-1$.
Because each $f_t(z)$ is a monic polynomial of degree $d$, we must have $k\leq d$. 
Hence, if $I=I(n)$ and $k\geq d+1$, $\SZ_k\setminus\SZ_{k-1}=\emptyset$ and thus
$E^{1;d}_{k,s}=0$ for any  $k\geq d+1$. }
\qed
\end{remark}

\begin{lemma}\label{lemma: E11}
If $1\leq k\leq  d$, there is a natural isomorphism
$$
E^{1;d}_{k,s}\cong
\tilde{H}^{2nk-s}_c(C_{k;I},\pm \Z),
$$
where 
the twisted coefficients system $\pm \Z$  comes from
the Thom isomorphism.
\end{lemma}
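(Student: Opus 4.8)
The plan is to combine Lemma \ref{lemma: vector bundle*} with the Thom isomorphism and Alexander-type duality for the affine bundle. By Lemma \ref{lemma: vector bundle*}, for $1\leq k\leq d$ the space $\SZ_k\setminus\SZ_{k-1}$ is the total space of a real affine bundle $\xi_{d,k}$ over $C_{k;I}$ of rank $l_{d,k}=2n(d-k)+k-1$. First I would recall that for a real vector bundle (or affine bundle, which is homotopy equivalent to its zero section and carries the same orientation data) $\xi$ of rank $l$ over a reasonably nice base $B$, one-point compactification along the fibres yields the Thom space, and there is a Thom isomorphism
$$
\tilde{H}^{j}_c(E(\xi),\Z)\cong \tilde{H}^{j-l}_c(B,\pm\Z),
$$
where $\pm\Z$ is the local system determined by the orientation (first Stiefel--Whitney) class of $\xi$, i.e. $w_1(\xi)\in H^1(B;\Z/2)$. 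Applying this with $E(\xi)=\SZ_k\setminus\SZ_{k-1}$, $B=C_{k;I}$, $l=l_{d,k}$, and $j=2nd+k-s-1$ (which is the cohomological degree appearing in the definition $E^{1;d}_{k,s}=\tilde{H}^{2nd+k-s-1}_c(\SZ_k\setminus\SZ_{k-1},\Z)$ from \eqref{SS}), the shift in degree is
$$
j-l_{d,k}=(2nd+k-s-1)-(2n(d-k)+k-1)=2nk-s,
$$
which gives exactly $E^{1;d}_{k,s}\cong \tilde{H}^{2nk-s}_c(C_{k;I},\pm\Z)$.

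The key steps, in order, would be: (1) state the compactly-supported Thom isomorphism for the affine bundle $\xi_{d,k}$, noting that an affine bundle deformation-retracts onto a vector bundle of the same rank and that the orientation local system is well defined; (2) identify the twisted coefficient system: the sign local system $\pm\Z$ on $C_{k;I}$ comes from the action of $S_k$ permuting the $k$ points $\{x_1,\dots,x_k\}$ — under this permutation the span of the $k$ vertices of the simplex (the open $(k-1)$-simplex factor of the fibre) is permuted, contributing a sign, exactly as in the classical Vassiliev/Arnold computation for configuration spaces and as in \cite[Lemma 4.4]{AKY1}; (3) do the arithmetic above to check the degree shift comes out to $2nk-s$.

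The main obstacle — really the only subtle point — is correctly pinning down the local system $\pm\Z$ and checking that it is precisely the one induced from the Thom class of $\xi_{d,k}$ rather than, say, its tensor product with some other sign coming from the simplicial-resolution construction. This requires unwinding how $S_k$ acts on the normal directions to $C_{k;I}$ inside $\SZ_k\setminus\SZ_{k-1}$: the fibre of $\pi_{k,d}$ was identified in the proof of Lemma \ref{lemma: vector bundle*} as the product of an open $(k-1)$-simplex with a real affine space of dimension $2n(d-k)$; the first factor carries the sign representation of $S_k$ (permuting the $k$ affinely independent vertices reverses orientation by the sign of the permutation), while the affine $2n(d-k)$-dimensional factor is the solution space of the linear system \eqref{equ: matrix equation}, on which $S_k$ permutes the equations and hence acts through an even-dimensional (complex, hence orientation-preserving) transformation, contributing nothing to $w_1$. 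So the orientation local system of $\xi_{d,k}$ is exactly the sign local system, and the statement follows. Once this bookkeeping is in place the rest is the routine application of the Thom isomorphism together with the homeomorphism $\mathcal{X}^{d}_{k}{}_+/\mathcal{X}^{d}_{k-1}{}_+\cong(\SZ_k\setminus\SZ_{k-1})_+$ already used to set up \eqref{SS}.
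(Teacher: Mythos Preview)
Your proposal is correct and follows essentially the same approach as the paper: identify $(\SZ_k\setminus\SZ_{k-1})_+$ with the Thom space of $\xi_{d,k}$ via Lemma~\ref{lemma: vector bundle*}, apply the Thom isomorphism, and check the degree shift $(2nd+k-s-1)-l_{d,k}=2nk-s$. Your discussion of why the orientation local system is the sign representation of $S_k$ (coming from the open $(k-1)$-simplex factor, with the complex affine factor contributing nothing) is more detailed than the paper's own treatment, which simply asserts that $\pm\Z$ arises from the Thom isomorphism.
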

\begin{proof}
Suppose that $1\leq k\leq d$.
By Lemma \ref{lemma: vector bundle*}, there is a
homeomorphism
$
(\SZ_k\setminus\SZ_{k-1})_+\cong T(\xi_{d,k}),
$
where $T(\xi_{d,k})$ denotes the Thom space of
$\xi_{d,k}$.
Since $(2nd+k-s-1)-l_{d,k}
=
2nk-s,$
by using the Thom isomorphism 
there is a natural isomorphism 
$
E^{1;d}_{k,s}
\cong 
\tilde{H}^{2nd+k-s-1}(T(\xi_{d,k}),\Z)
\cong
\tilde{H}^{2nk-s}_c(C_{k;I},\pm \Z).
$
\end{proof}

\paragraph{\bf 4.2. Homomorphisms of spectral sequences.}
\ 
Recall $U_d=\{w\in \C:\mbox{Re}(w)<d\}$ and 
the stabilization map
$s_d:\Hol_d^*(S^2,X_I)\to \Hol^*_{d+1}(S^2,X_I)$
given by (\ref{equ; sta}).
Note that the map $s_d$ extends to an open embedding
\begin{equation}\label{equ: sssd}
\overline{s}_d:\C^n\times \Hol_d^*(S^2,X_I)\to
\Hol_{d+1}^*(S^2,X_I)
\end{equation}
as in
(\ref{equ: open-sd}).
In fact, by exactly the same argument, it also
naturally extends to an open embedding
$\tilde{s}_d:\C^n\times \P^d(\C)^n\to \P^{d+1}(\C)^n$ and  by  restriction  we get an open embedding
\begin{equation}\label{equ: open embedding}
\tilde{s}_d:\C^n\times \Sigma_d\to \Sigma_{d+1}.
\end{equation}
Because one-point compactification is contravariant for open embeddings,
(\ref{equ: open embedding}) induces the map
$\tilde{s}_{d+}:\Sigma_{d+1+}\to
(\C^d\times \Sigma_d)_+=S^{2d}\wedge \Sigma_{d+}.$
Note that there is a commutative diagram
\begin{equation}
\begin{CD}
\tilde{H}_k(\Hol_d^*(S^2,X_I),\Z) @>{s_d}_*>>\tilde{H}_k(\Hol_{d+1}^*(S^2,X_I),\Z)
\\
@V{Al}V{\cong}V @V{Al}V{\cong}V
\\
\tilde{H}^{2dn-k-1}_c(\Sigma_d,\Z)
@>{\tilde{s}_{d+}}^{\ *}>>
\tilde{H}^{2(d+1)n-k-1}_c(\Sigma_{d+1},\Z)
\end{CD}
\end{equation}
where $Al$ denotes the Alexander duality isomorphism and
 ${\tilde{s}_{d+}}^{\ *}$ the composite of the
homomorphisms 
$$
\tilde{H}^{2dn-k-1}_c(\Sigma_d,\Z)
\stackrel{\cong}{\rightarrow}
\tilde{H}^{2(d+1)n-k-1}_c(\C^n\times \Sigma_d,\Z)
\stackrel{(\tilde{s}_{d+})^*}{\longrightarrow}
\tilde{H}^{2(d+1)n-k-1}_c(\Sigma_{d+1},\Z).
$$
By the universality of the non-degenerate simplicial resolution
(\cite[Page 286-287]{Mo2}), 
the map $\tilde{s}_d$ (given by (\ref{equ: open embedding}))
also
naturally extends to a filtration preserving open embedding
\begin{equation}\label{equ: flitr-preserve map}
\tilde{s}_d:\C^n \times \SZ \to \SZd.
\end{equation}
This  induces a filtration preserving map
$(\tilde{s}_d)_+:\SZd_+\to (\C^n \times \SZ)_+=S^{2n}\wedge \SZ_+$,
and thus a homomorphism of spectral sequences
\begin{equation}\label{equ: theta1}
\{ \tilde{\theta}_{k,s}^t:E^{t;d}_{k,s}\to E^{t;d+1}_{k,s}\},
\end{equation}
where 
for $\epsilon \in \{0,1\}$,
$E^{1;d+\epsilon}_{k,s}=
\tilde{H}_c^{2n(d+\epsilon )+k-1-s}(\SZ_k\setminus \SZ_{k-1},\Z)$
and
\begin{equation}\label{equ: spectral sequ1}
\big\{E^{t;d+\epsilon}_{k,s}, \tilde{d}^{t}:
E^{t;d+\epsilon}_{k,s}\to E^{t;d+\epsilon}_{k+t,s+t-1}
\big\}
\quad\Rightarrow 
\quad H_{s-k}(\Hol_{d+\epsilon}^*(S^2,X_I),\Z).
\end{equation}
\begin{lemma}\label{lmm: E1}
If $1\leq k\leq d$, $\tilde{\theta}^1_{k,s}:E^{1;d}_{k,s}\to E^{1;d+1}_{k,s}$ is
an isomorphism for any $s$.
\end{lemma}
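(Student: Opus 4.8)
The plan is to deduce the isomorphism directly from the geometric description of the strata $\SZ_k\setminus\SZ_{k-1}$ and the compatibility of the stabilization embedding with the filtration. First I would recall from Lemma \ref{lemma: vector bundle*} that for $1\leq k\leq d$ the stratum $\SZ_k\setminus\SZ_{k-1}$ is the total space of a real affine bundle $\xi_{d,k}$ of rank $l_{d,k}=2n(d-k)+k-1$ over the base $C_{k;I}$, and similarly (since $k\leq d<d+1$) that $\mathcal{X}^{d+1}_k\setminus\mathcal{X}^{d+1}_{k-1}$ is the total space of an affine bundle $\xi_{d+1,k}$ of rank $l_{d+1,k}=2n(d+1-k)+k-1=l_{d,k}+2n$ over the \emph{same} base $C_{k;I}$. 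The key point is that the base $C_{k;I}$ depends only on $k$ and on the arrangement $L(I)$, not on the degree $d$, so both strata have the same base.

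Next I would analyze how the filtration-preserving open embedding $\tilde{s}_d:\C^n\times\SZ\to\SZd$ of (\ref{equ: flitr-preserve map}) behaves on the $k$-th stratum. By construction $\tilde s_d$ shifts the $n$-tuple of polynomials by a fixed $n$-tuple of points lying in $U_{d+1}\setminus\overline U_d$, and it acts on the resolution by the identity on the simplicial (barycentric) coordinates $u$; hence on the projection to $C_{k;I}$ it is compatible with the projections $\pi_{k,d}$ and $\pi_{k,d+1}$, i.e. the square with the $\C^n$-factor on the left and the bundle projections is commutative and $\tilde s_d$ restricts to a bundle map $\C^n\times\xi_{d,k}\to\xi_{d+1,k}$ covering $\mathrm{id}_{C_{k;I}}$. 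Passing to one-point compactifications and Thom spaces, $(\tilde s_d)_+$ restricted to the $k$-th subquotient is, up to the identification $(\C^n\times\xi_{d,k})_+ \cong S^{2n}\wedge T(\xi_{d,k})$, precisely the homeomorphism $S^{2n}\wedge T(\xi_{d,k})\cong T(\C^n\times\xi_{d,k})=T(\xi_{d+1,k})$ induced by the isomorphism $\C^n\times\xi_{d,k}\cong\xi_{d+1,k}$ of affine bundles over $C_{k;I}$ (both being the unique affine bundle of the respective rank that arises in this way, with matching twisted orientation data). Therefore the induced map on $E^1$-terms is, under the Thom isomorphisms of Lemma \ref{lemma: E11},
$$
\tilde\theta^1_{k,s}:E^{1;d}_{k,s}\cong\tilde H^{2nk-s}_c(C_{k;I},\pm\Z)
\xrightarrow{\ =\ }
\tilde H^{2nk-s}_c(C_{k;I},\pm\Z)\cong E^{1;d+1}_{k,s},
$$
which is the identity on $\tilde H^{2nk-s}_c(C_{k;I},\pm\Z)$, hence an isomorphism for every $s$.

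I expect the main obstacle to be the careful bookkeeping in the middle step: verifying that $\tilde s_d$ genuinely respects the non-degenerate simplicial structure stratum by stratum (this is where the cited universality of the non-degenerate simplicial resolution, \cite[Page 286-287]{Mo2}, is used), and checking that the twisted coefficient systems $\pm\Z$ on the two copies of $C_{k;I}$ are identified by the bundle map, so that the Thom isomorphisms intertwine $\tilde\theta^1_{k,s}$ with the identity rather than merely with some isomorphism. Once the bundle map $\C^n\times\xi_{d,k}\cong\xi_{d+1,k}$ over $C_{k;I}$ is in hand together with the matching orientation data, the conclusion is immediate; this is entirely parallel to the argument in \cite[Lemma 4.4]{AKY1} and the corresponding stabilization lemmas in \cite{KY4}, so I would phrase the proof by invoking that analogy after setting up the identification of bases and the bundle map.
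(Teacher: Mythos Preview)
Your proposal is correct and follows essentially the same approach as the paper: the paper's proof also reduces, via Lemma~\ref{lemma: vector bundle*}, to a homotopy commutative square of affine bundles $\C^n\times(\SZ_k\setminus\SZ_{k-1})\to\SZd_k\setminus\SZd_{k-1}$ over the common base $C_{k;I}$, and then reads off from the Thom isomorphism that $\tilde\theta^1_{k,s}$ corresponds to the identity on $\tilde H^{2nk-s}_c(C_{k;I},\pm\Z)$. Your write-up is in fact more explicit than the paper's (which simply records the two commutative diagrams and declares the assertion follows), particularly in flagging the orientation/coefficient compatibility and the role of the universality of the non-degenerate resolution; these points are genuine but routine, and the paper leaves them implicit.
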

\begin{proof}
Suppose that $1\leq k\leq d$.
It follows from the proof of Lemma \ref{lemma: vector bundle*}
that there is a homotopy commutative diagram of affine vector bundles
$$
\begin{CD}
\C^n\times (\SZ_k\setminus\SZ_{k-1}) @>>> C_{k;I}
\\
@VVV \Vert @.
\\
\SZd_k\setminus \SZd_{k-1} @>>> C_{k;I}
\end{CD}
$$
Hence, 
we have 
a commutative diagram
$$
\begin{CD}
E^{1,d}_{k,s} @>>\cong> \tilde{H}^{2nk-s}_c(C_{k;I},\pm \Z)
\\
@V{\tilde{\theta}_{k,s}^1}VV \Vert @.
\\
E^{1,d+1}_{k,s} @>>\cong> \tilde{H}^{2nk-s}_c(C_{k;I},\pm \Z)
\end{CD}
$$
and the assertion follows.
\end{proof}
\paragraph{\bf 4.3. Truncated spectral sequences. }
Now we consider the spectral sequences induced by the 
truncated simplicial resolutions.
\begin{definition}
{\rm
Let $X^{\Delta}(d)$ denote the truncated (after the $d$-th term) simplicial resolution of $\Sigma_d$
with the natural filtration
$
\emptyset =X^{\Delta}_0\subset
X^{\Delta}_1\subset \cdots\subset
X^{\Delta}_d\subset X^{\Delta}_{d+1}=X^{\Delta}_{d+2}=\cdots =X^{\Delta}(d),
$
where $X^{\Delta}_k=\SZ_k$ if $k\leq d$ and $X^{\Delta}_k=X^{\Delta}(d)$ if $k\geq d+1$.
\par
Similarly,
let $Y^{\Delta}(d)$ denote  truncated (after the $d$-th term) simplicial resolution of 
$\Sigma_{d+1}$
with the natural filtration
$
\emptyset =Y^{\Delta}_0\subset
Y^{\Delta}_1\subset \cdots\subset
Y^{\Delta}_d\subset Y^{\Delta}_{d+1}=Y^{\Delta}_{d+2}=\cdots =Y^{\Delta}(d),
$
where $Y^{\Delta}_k=\SZd_k$ if $k\leq d$ and $Y^{\Delta}_k=Y^{\Delta}(d)$ if $k\geq d+1$.
\par\vspace{1mm}\par
Then, by using Lemma \ref{Lemma: truncated} and the same method
as in \cite[\S 2 and \S 3]{Mo3} (cf. \cite[Lemma 2.2]{KY4}), 
for $\epsilon\in \{0,1\}$ we obtain {\it the truncated spectral sequence}
\begin{equation*}\label{equ: spectral sequ2}
\big\{E(d+\epsilon)^{t}_{k,s}, d^{t}:E(d+\epsilon)^{t}_{k,s}\to 
E(d+\epsilon)^{t}_{k+t,s+t-1}
\big\}
\Rightarrow H_{s-k}(\Hol_{d+\epsilon}^*(S^2,X_I),\Z),
\end{equation*}
where
$$
\begin{cases}
\E^{1}_{k,s}&=\ \tilde{H}_c^{2nd+k-1-s}(X^{\Delta}_k\setminus X^{\Delta}_{k-1},\Z),
\\
\Ed^{1}_{k,s}&=\ \tilde{H}_c^{2n(d+1)+k-1-s}(Y^{\Delta}_k\setminus Y^{\Delta}_{k-1},\Z).
\end{cases}
$$ 
}
\end{definition}
\begin{remark}
{\rm
Note that the notation $\Ed^{t}_{k,s}$ refers to the spectral sequence of the simplicial resolution of $\SZd$ truncated after the $d$-th  rather than $d+1$-th term.}
\qed
\end{remark}
By the naturality of truncated simplicial resolutions,
the filtration preserving map
$\tilde{s}_d:\C^n\times \SZ \to \SZd$
(given by (\ref{equ: flitr-preserve map}))  gives rise to a natural filtration preserving map
\begin{equation}
\tilde{s}_d^{\p}:\C^n\times X^{\Delta}(d) \to Y^{\Delta}(d)
\end{equation}
which, in a way analogous to  (\ref{equ: theta1}), induces
a homomorphism of spectral sequences 
\begin{equation}\label{equ: theta2}
\{ \theta_{k,s}^t:\E^{t}_{k,s}\to \Ed^{t}_{k,s}\}.
\end{equation}
\begin{lemma}\label{lmm: Ed}
Let $r_{\rm min}=r_{\rm min}(I)$, and let $\epsilon \in\{0,1\}$.
\begin{enumerate}
\item[$\I$]
If $k<0$ or $k\geq d+2$,
$E(d+\epsilon )^1_{k,s}=0$ for any $s$.
\item[$\II$]
$E(d+\epsilon )^1_{0,0}=\Z$ and $E(d+\epsilon)^1_{0,s}=0$ if $s\not= 0$.
\item[$\III$]
If $1\leq k\leq d$, there is a natural isomorphism
$E(d+\epsilon)^1_{k,s}\cong\tilde{H}^{2nk-s}_c(C_{k;I},\pm \Z)$.
\item[$\IV$]
$E(d+\epsilon)^1_{d+1,s}=0$ for any $s\leq (2\rmin -2)d-1$.
\end{enumerate}
\end{lemma}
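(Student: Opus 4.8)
The four assertions concern the $E^1$-page of the truncated spectral sequences $E(d+\epsilon)^1_{k,s}$, so the plan is to analyze the filtration strata $X^{\Delta}_k\setminus X^{\Delta}_{k-1}$ (and $Y^{\Delta}_k\setminus Y^{\Delta}_{k-1}$) one range of $k$ at a time, reducing everything to the already-established structure results for the non-truncated resolution. For $\I$, when $k<0$ the group is zero by convention; when $k=d+1$ the stratum $X^{\Delta}_{d+1}\setminus X^{\Delta}_d$ is exactly the cone-correction added in Definition \ref{def: 2.3} to make the fibers of $h_d$ contractible, and for $k\geq d+2$ the filtration is constant ($X^{\Delta}_k=X^{\Delta}_{k+1}=X^{\Delta}(d)$), so the stratum is empty and the group vanishes. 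For $\II$, the stratum with $k=0$ is a single point (the fiber over $\emptyset$, or the basepoint after one-point compactification), giving $\tilde H^0_c$ of a point, which is $\Z$ in degree $s=0$; this is the standard bottom-row computation. For $\III$, the key point is that truncation does not affect the strata with $k\leq d$: there $X^{\Delta}_k\setminus X^{\Delta}_{k-1}=\SZ_k\setminus\SZ_{k-1}$ verbatim, so Lemma \ref{lemma: E11} applies directly and gives the claimed isomorphism with $\tilde H^{2nk-s}_c(C_{k;I},\pm\Z)$.

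The substantive part is $\IV$: showing $E(d+\epsilon)^1_{d+1,s}=0$ for $s\leq (2\rmin-2)d-1$. Here I would use the description of the top stratum $X^{\Delta}_{d+1}\setminus X^{\Delta}_d$ coming from the truncation construction. Over the closed set $Y(d)=\overline{\{F\in\Sigma_d:\mathrm{card}(\pi_d^{-1}(F))>d\}}$ we have attached, fiberwise, a cone on the $d$-skeleton of the simplex spanned by $\pi_d^{-1}(F)$; the stratum in filtration degree exactly $d+1$ is the open part of this cone-construction lying over $Y(d)$, which fibers (in the appropriate semi-algebraic sense) over a space built from configurations of $\geq d+1$ points $x_j\in\C$ with $F(x_j)\in L(I)$. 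The essential estimate is a connectivity/dimension bound: the locus $Y(d)$ of $n$-tuples of degree-$d$ monic polynomials having $\geq d+1$ common "bad" values has high codimension in $\P^d(\C)^n$. Imposing $F(x_j)\in L(I)$ for $d+1$ distinct points $x_j$ forces, for each $j$, that the coordinates indexed by some $\sigma_j\in I$ vanish at $x_j$; since each such vanishing is $\rmin$ independent linear conditions per point and we have $d+1$ points but only $d$ free coefficients per polynomial, one gets a codimension bound of roughly $(2\rmin-2)d$ (the "$-2$" slack coming from the one free point-position parameter and the open-simplex parameter). Translating this codimension bound through Alexander duality / the compactly-supported cohomology indexing $2nd+k-1-s$ with $k=d+1$ yields vanishing of $\tilde H^{2nd+d-s}_c$ of the stratum precisely in the range $s\leq(2\rmin-2)d-1$.

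For the bookkeeping in $\IV$ I would model $X^{\Delta}_{d+1}\setminus X^{\Delta}_d$ as the total space of a cone bundle over $Y(d)$ and stratify $Y(d)$ itself by how many "defining simplices" $\sigma\in I$ are used; on each piece one has a fibration over a configuration-type space $C_{k;I}$-analogue with $k\geq d+1$, whose fiber is an affine space of dimension $2n d-(\text{number of independent conditions})$, and the number of independent linear conditions is at least $(d+1)\rmin$ rather than $(d+1)\cdot 1$, which is what pushes the codimension past the threshold. The main obstacle, and the step requiring genuine care, is making the "independent conditions" count rigorous when the $d+1$ points and their prescribed values in $L(I)$ vary — i.e.\ controlling the rank of the relevant (Vandermonde-type) linear system uniformly over the stratum and across the different choices of $\sigma_j\in I$, since for $k=d+1>d$ the Vandermonde matrix is no longer of full column rank and one must instead argue that the constraints on the value-vectors $s_j\in L(I)$ consume the extra dimensions. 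Once that rank bound is in hand, the cohomological vanishing in the asserted range is a routine consequence of the Thom isomorphism together with the dimension of a cell complex being a bound on its compactly-supported cohomological degree.
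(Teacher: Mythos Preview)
Your treatment of $\I$--$\III$ matches the paper: these follow at once from the definition of the truncated filtration (constant for $k\geq d+1$, equal to $\SZ_k$ for $k\leq d$) together with Lemma~\ref{lemma: E11}.

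For $\IV$, however, you are working far harder than necessary, and the gap you yourself flag --- controlling the rank of the overdetermined Vandermonde system when $k=d+1>d$, uniformly over all choices of $\sigma_j\in I$ --- is a genuine obstacle on the route you chose. The paper bypasses all of this. The truncation construction of Definition~\ref{def: 2.3} adds, over $Y(d)$, an open cone on each fibre of $h_d$; a general structural fact about this construction (\cite[Lemma~2.1]{Mo3}) gives directly
\[
\dim\bigl(X^{\Delta}_{d+1}\setminus X^{\Delta}_d\bigr)
\;=\;
\dim\bigl(\SZ_d\setminus\SZ_{d-1}\bigr)+1.
\]
The right-hand side is already computed by Lemma~\ref{lemma: vector bundle*}: $\SZ_d\setminus\SZ_{d-1}$ is an affine bundle of rank $l_{d,d}=d-1$ over $C_{d;I}$, and $\dim C_{d;I}=2(1+n-\rmin)d$, so the stratum has dimension $2nd+3d-2\rmin d$. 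Since $\E^1_{d+1,s}=\tilde H_c^{2nd+d-s}(X^{\Delta}_{d+1}\setminus X^{\Delta}_d,\Z)$ and compactly supported cohomology vanishes above the dimension, the inequality $2nd+d-s>2nd+3d-2\rmin d$, i.e.\ $s\leq(2\rmin-2)d-1$, gives the result.

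In short: no stratification of $Y(d)$, no case analysis over $\sigma_j\in I$, and no rank estimate beyond what was already done for $k\leq d$ is needed. The whole point of truncating after the $d$-th term is precisely that the single extra stratum has dimension only one more than the last ``good'' stratum $\SZ_d\setminus\SZ_{d-1}$, and that crude bound already suffices.
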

\begin{proof}
Since the proofs of both cases are identical,  we will only consider the case
$\epsilon =0$.
Since $X^{\Delta}_k=\SZ_k$ for any $k\geq d+2$,
the assertions (i) and (ii) are clearly true.
Since $X^{\Delta}_k=\SZ_k$ for any $k\leq d$,
(iii) easily follows from Lemma \ref{lemma: E11}.
Thus it remains to prove (iv).
By Lemma \cite[Lemma 2.1]{Mo3},
$$
\dim (X^{\Delta}_{d+1}\setminus X^{\Delta}_d)=\dim (\SZ_d\setminus \SZ_{d-1})+1
=(l_{d,d}+\dim C_{d;I})+1=2nd+3d-2\rmin d.
$$
Because 
$\E^1_{d+1,s}=\tilde{H}_c^{2nd+d-s}(X^{\Delta}_{d+1}\setminus X^{\Delta}_d,\Z)$
and
$2nd+d-s>\dim (X^{\Delta}_{d+1}\setminus X^{\Delta}_d)$
$\Leftrightarrow$
$s\leq (2\rmin -2)d-1$,
we see that
$\E^1_{d+1,s}=0$ for any $s\leq (2\rmin -2)d-1$.
\end{proof}

\begin{lemma}\label{lmm: E2}
If $1\leq k\leq d$, $\theta^1_{k,s}:\E^{1}_{k,s}\to \Ed^{1}_{k,s}$ is
an isomorphism for any $s$.
\end{lemma}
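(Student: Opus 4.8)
The plan is to deduce this from the corresponding statement for the untruncated resolutions, Lemma \ref{lmm: E1}, by comparing the two spectral sequences through the identifications of their $E^1$-terms with the cohomology of $C_{k;I}$. Since we are in the range $1\leq k\leq d$, the truncation after the $d$-th term does not affect the filtration subquotients: by Definition \ref{def: 2.3} we have $X^{\Delta}_k=\SZ_k$ and $Y^{\Delta}_k=\SZd_k$ for all $k\leq d$, hence $X^{\Delta}_k\setminus X^{\Delta}_{k-1}=\SZ_k\setminus \SZ_{k-1}$ and $Y^{\Delta}_k\setminus Y^{\Delta}_{k-1}=\SZd_k\setminus \SZd_{k-1}$ as filtered spaces. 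Therefore, for $1\leq k\leq d$, the groups $\E^1_{k,s}$ and $E^{1;d}_{k,s}$ literally coincide (and similarly $\Ed^1_{k,s}=E^{1;d+1}_{k,s}$).

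Next I would check that the comparison map $\theta^1_{k,s}$ agrees with $\tilde{\theta}^1_{k,s}$ under these identifications. This is a matter of naturality: by the universality of non-degenerate simplicial resolutions (\cite[Page 286--287]{Mo2}), the open embedding $\tilde{s}_d:\C^n\times \SZ\to\SZd$ of (\ref{equ: flitr-preserve map}) restricts to $\tilde{s}_d^{\p}:\C^n\times X^{\Delta}(d)\to Y^{\Delta}(d)$, and on the $k$-th filtration subquotient for $k\leq d$ the latter restriction is exactly the former since the two filtrations agree there. Consequently the induced $E^1$-homomorphisms agree: $\theta^1_{k,s}=\tilde{\theta}^1_{k,s}$ for $1\leq k\leq d$. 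Now invoke Lemma \ref{lmm: E1}, which states that $\tilde{\theta}^1_{k,s}:E^{1;d}_{k,s}\to E^{1;d+1}_{k,s}$ is an isomorphism in this range, and conclude.

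The only thing that requires genuine care, rather than bookkeeping, is the compatibility claim in the second step --- that truncating commutes with the stabilization map at the level of filtered spaces and hence at the level of $E^1$-terms. Everything else is a direct transport of Lemma \ref{lemma: E11} and Lemma \ref{lmm: E1} through an identification that is tautological in the stated range $1\leq k\leq d$. So I expect the write-up to be short: reduce to Lemma \ref{lmm: E1} via the coincidence of filtration subquotients and the naturality of the truncation construction with respect to open embeddings.
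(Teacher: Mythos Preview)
Your proposal is correct and is essentially the paper's own argument spelled out in more detail: the paper simply observes that $(X^{\Delta}_k,Y^{\Delta}_k)=(\SZ_k,\SZd_k)$ for $k\leq d$ and then invokes Lemma~\ref{lmm: E1}. The only minor quibble is that $\tilde{s}_d^{\p}$ is not literally a restriction of $\tilde{s}_d$ (the truncated resolution contains extra cone points), but rather the map naturally induced by it; on the $k$-th filtration subquotients for $k\leq d$ the two do coincide, which is exactly what you use.
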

\begin{proof}
Since $(X^{\Delta}_k,Y^{\Delta}_k)=(\SZ_k,\SZd_k)$ for $k\leq d$,
the assertion follows from Lemma \ref{lmm: E1}.
\end{proof}

\section{Homological stability.}\label{section: Proofs}

In this section we prove Theorem \ref{thm: unstable}, 
Theorem \ref{thm: CP} and Theorem \ref{thm: Segal}.

\paragraph{\bf 5.1. Proof of the homology stability.}
\ First, we give a proof of Theorem \ref{thm: unstable}.

\begin{proof}[Proof of Theorem \ref{thm: unstable}]
We write $r_{\rm min}=r_{\rm min}(I)$, and
consider the homomorphism
$\theta_{k,s}^t:\E^{t}_{k,s}\to \Ed^{t}_{k,s}$
of truncated spectral sequences given in (\ref{equ: theta2}).
We want to show that the positive integer $D(I;d)$ has the 
following property:
\begin{enumerate}
\item[$(*)$]
$\theta^{\infty}_{k,s}$
is  an isomorphism for all $(k,s)$ such that $s-k\leq D(I;d)$.
\end{enumerate}
By Lemma \ref{lmm: Ed}, 
$\E^1_{k,s}=\Ed^1_{k,s}=0$ if
$k<0$, or if $k\geq d +2$, or if $k=d +1$ with $s\leq (2\rmin -2)d-1$.
Because $(2\rmin -2)d-1-(d+1)=(2\rmin -3)d-2
=D(I;d)$,
we  see that:
\begin{enumerate}
\item[$(*)_1$]
if $k< 0$ or $k\geq d +1$,
$\theta^{\infty}_{k,s}$ is an isomorphism for all $(k,s)$ such that
$s-k\leq D(I;d)$.
\end{enumerate}
\par
Next, we assume that $0\leq k\leq d$, and investigate the condition that
$\theta^{\infty}_{k,s}$  is an isomorphism.
Note that the groups $\E^1_{k_1,s_1}$ and $\Ed^1_{k_1,s_1}$ are not known for
$(k_1,s_1)\in\mathcal{S}_1=\{(d+1,s)\in\Z^2:s\geq (2\rmin -2)d\}$.
By considering the differential
$d^1:E(d+\epsilon)^1_{k,s}\to E(d+\epsilon)^1_{k+1,s}$
for $\epsilon \in \{0,1\}$,
and applying Lemma \ref{lmm: E2}, we see that
$\theta^2_{k,s}$ is an isomorphism if
$(k,s)\notin \mathcal{S}_1 \cup \mathcal{S}_2$, where
$$
\mathcal{S}_2=:
\{(k_1,s_1)\in\Z^2:(k_1+1,s_1)\in \mathcal{S}_1\}
=\{(d,s_1)\in \Z^2:s_1\geq (2\rmin -2)d\}.
$$
A similar argument  shows that
$\theta^3_{k,s}$ is an isomorphism if
$(k,s)\notin \bigcup_{t=1}^3\mathcal{S}_t$, where
$\mathcal{S}_3=\{(k_1,s_1)\in\Z^2:(k_1+2,s_1+1)\in \mathcal{S}_1\cup
\mathcal{S}_2\}.$
Continuing in the same fashion,
considering the differentials
$d^t:\E^t_{k,s}\to \E^{t}_{k+t,s+t-1}$
and
$d^t:\Ed^t_{k,s}\to \Ed^{t}_{k+t,s+t-1},$
and applying the inductive hypothesis,
we  see that $\theta^{\infty}_{k,s}$ is an isomorphism
if $\dis (k,s)\notin \mathcal{S}:=\bigcup_{t\geq 1}\mathcal{S}_t
=\bigcup_{t\geq 1}A_t$,
where  $A_t$ denotes the set
$$
A_t:=
\left\{
\begin{array}{c|l}
 &\mbox{ There are positive integers }l_1,l_2,\cdots ,l_t
\mbox{ such that},
\\
(k_1,s_1)\in \Z^2 &\  1\leq l_1<l_2<\cdots <l_t,\ 
k_1+\sum_{j=1}^tl_j=d +1,
\\
& \ s_1+\sum_{j=1}^t(l_j-1)\geq (2\rmin -2)d
\end{array}
\right\}.
$$
Note that 
if this set was empty for every $t$, then, of course, the conclusion of 
Theorem \ref{thm: unstable} would hold in all dimensions (this is known to be false in general). 
If $\dis A_t\not= \emptyset$, it is easy to see that
$
a(t)=\min \{s-k:(k,s)\in A_t\}=
(2\rmin -2)d-(d+1)+t
=D(I;d)+t+1.
$
Hence, 
$\min \{a(t):t\geq 1,A_t\not=\emptyset\}=D(I;d)+2.$
Since $\theta^{\infty}_{k,s}$ is an isomorphism
for any $(k,s)\notin \bigcup_{t\geq 1}A_t$ for each $0\leq k\leq d$,
we have the following:
\begin{enumerate}
\item[$(*)_2$]
If $0\leq k\leq d$,
$\theta^{\infty}_{k,s}$ is  an isomorphism for any $(k,s)$ such that
$s-k\leq  D(I;d)+1.$
\end{enumerate}
Then, by $(*)_1$ and $(*)_2$, we see that
$\theta^{\infty}_{k,s}:\E^{\infty}_{k,s}\stackrel{\cong}{\rightarrow}
\Ed^{\infty}_{k,s}$ is an isomorphism for any $(k,s)$
if $s-k\leq D(I;d)$.
So we see that
$s_d$ is a homology equivalence through dimension
$D(I;d)$.
\end{proof}

\begin{corollary}\label{crl: Corollary I}
The inclusion map
$i_d:\Hol_d^*(S^2,X_I)\to \Omega^2_dX_I$ is a
homology equivalence through dimension $D(I;d)$.
\end{corollary}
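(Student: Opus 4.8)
The plan is to deduce Corollary \ref{crl: Corollary I} from Theorem \ref{thm: unstable} together with the ``stable'' result Theorem \ref{thm: stable}, by comparing the tower of stabilization maps with the tower of inclusions into the mapping space. First I would recall that the stabilization maps $s_d$ fit into a commutative (up to homotopy) ladder
$$
\begin{CD}
\Hol_d^*(S^2,X_I) @>{s_d}>> \Hol_{d+1}^*(S^2,X_I)
\\
@V{i_d}VV @VV{i_{d+1}}V
\\
\Omega^2_dX_I @>{\simeq}>> \Omega^2_{d+1}X_I
\end{CD}
$$
where the bottom map is the homotopy equivalence given by loop-sum with a fixed element of $\pi_2(X_I)=\Z$ (this uses that $X_I$ is simply connected, as recorded in the footnote to Lemma \ref{lmm: pi2XI}). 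Passing to the colimit over $d$ and invoking Theorem \ref{thm: stable}, the map $i_\infty$ is a homotopy equivalence, hence in particular a homology isomorphism in every degree.

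Next I would run the standard ``interpolation'' argument. Fix a degree $k$ and an integer $d$ with $k\le D(I;d)$. By Theorem \ref{thm: unstable}, each $s_{d'}$ for $d'\ge d$ induces an isomorphism on $H_j(-;\Z)$ for $j<D(I;d')$ and an epimorphism for $j=D(I;d')$; since $D(I;d')=(2\rmin(I)-3)d'-2$ is nondecreasing in $d'$ (as $\rmin(I)\ge 3$), we have $k\le D(I;d')$ for all $d'\ge d$. Therefore the maps in the colimit system
$$
H_k(\Hol_d^*(S^2,X_I);\Z)\to H_k(\Hol_{d+1}^*(S^2,X_I);\Z)\to\cdots\to H_k\big(\lim_{d'\to\infty}\Hol_{d'}^*(S^2,X_I);\Z\big)
$$
are all isomorphisms (for $k<D(I;d)$) or the first is an epimorphism and the rest isomorphisms (for $k=D(I;d)$). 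Since homology commutes with this (filtered, injective) colimit, the natural map $H_k(\Hol_d^*(S^2,X_I);\Z)\to H_k(\Hol_\infty^*(S^2,X_I);\Z)$ is an isomorphism for $k<D(I;d)$ and an epimorphism for $k=D(I;d)$.

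Finally I would combine the two facts. In the commutative square obtained from the ladder above by passing from $\Hol_d^*$ to the colimit, three of the four maps are now under control: the colimit map on the $\Hol$-side is a homology iso/epi in the stated range (previous paragraph), the colimit map on the $\Omega^2$-side is a homotopy equivalence (the bottom maps are equivalences), and $i_\infty$ is a homology equivalence (Theorem \ref{thm: stable}). A routine diagram chase then forces $i_d\colon H_k(\Hol_d^*(S^2,X_I);\Z)\to H_k(\Omega^2_dX_I;\Z)$ to be an isomorphism for $k<D(I;d)$ and an epimorphism for $k=D(I;d)$, i.e. $i_d$ is a homology equivalence through dimension $D(I;d)$. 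I do not expect any serious obstacle here: the only point requiring a little care is checking that the square relating $s_d$, $i_d$, $i_{d+1}$ and the loop-sum equivalence really does commute up to homotopy (this is where one uses that the stabilization is realized geometrically by the embeddings $\hat s_d$ of \S\ref{section: 2}, matching the scanning-map description of the equivalence $\Omega^2_dX_I\simeq\Omega^2_{d+1}X_I$), and that monotonicity of $D(I;d)$ in $d$ genuinely holds, which is immediate from $\rmin(I)\ge 3$.
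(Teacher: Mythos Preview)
Your approach is exactly the one the paper intends: the paper's own proof is the single sentence ``The assertion easily follows from Theorem \ref{thm: stable} and Theorem \ref{thm: unstable},'' and your ladder-plus-colimit argument is precisely the standard way to unpack that sentence.

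Two small corrections. First, the monotonicity of $D(I;d)=(2\rmin(I)-3)d-2$ in $d$ only needs $\rmin(I)\ge 2$, which is always assumed (Definition \ref{dfn: Hol}(iii) and the standing hypothesis on $I$); Corollary \ref{crl: Corollary I} is stated without the restriction $\rmin(I)\ge 3$, so you should not impose it. Second, you have mixed up the paper's conventions ``through'' and ``up to'' (Definition \ref{dfn: Hol}(ii)): Theorem \ref{thm: unstable} gives that $(s_{d'})_*$ is an \emph{isomorphism} on $H_k$ for all $k\le D(I;d')$, not merely an epimorphism at the top degree. Thus every map in your colimit tower is an isomorphism for $k\le D(I;d)$, and your diagram chase yields that $(i_d)_*$ is an isomorphism for all $k\le D(I;d)$, which is exactly ``homology equivalence through dimension $D(I;d)$''. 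Your stated conclusion (iso for $k<D(I;d)$, epi for $k=D(I;d)$) is the weaker ``up to'' statement.
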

\begin{proof}
The assertion easily follows from
Theorem \ref{thm: stable} and Theorem \ref{thm: unstable}.
\end{proof}
\par\vspace{2mm}\par

Next, we prove Theorem \ref{thm: CP}.
Since $L(I(n))=\{{\bf 0}\}$, we  see that $C_{k;I}=C_k(\C)$ if $I=I(n)$.
To prove Theorem \ref{thm: CP}, we use the spectral sequence given in (\ref{SS})
for $\epsilon\in \{0,1\}$,
\begin{equation}\label{equ: spectral sequ3}
\{E^{t;d+\epsilon}_{k,s},d^t:E^{t;d+\epsilon}_{k,s}\to 
E^{t;d+\epsilon}_{k+t,s+t-1}\}
 \Rightarrow H_{s-k}(\Hol_{d+\epsilon}^*(S^2,\CP^{n-1}),\Z).
\end{equation}
The following  result easily follows from Remark \ref{rmk: CP} and
Lemma \ref{lemma: E11}.
\begin{lemma}\label{lmm: E3}
Let $I=I(n)$ and $\epsilon\in \{0,1\}$. 
\begin{enumerate}
\item[$\I$]
If $k<0$ or $k\geq d+1+\epsilon$,
$E^{1;d+\epsilon}_{k,s}=0$ for any $s$.
\item[$\II$]
$E^{1;d+\epsilon}_{0,0}=\Z$ and $E^{1;d+\epsilon}_{0,s}=0$ if $s\not= 0$.
\item[$\III$]
If $1\leq k\leq d+\epsilon$, there is a natural isomorphism
$$
E^{1;d+\epsilon}_{k,s}\cong \tilde{H}_c^{2nk-s}(C_k(\C),\pm \Z).
\qed
$$
\end{enumerate}
\end{lemma}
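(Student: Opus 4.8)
The plan is to read all three statements directly off results already proved in \S\ref{section: spectral sequence}, specialised to $I=I(n)$ and applied with $d$ replaced by $d+\epsilon$. Concretely, part (iii) will come from Lemma \ref{lemma: E11}, part (i) from Remark \ref{rmk: CP} together with the convention that the filtration is indexed by non-negative integers, and part (ii) is the standard bottom term of a Vassiliev spectral sequence, handled exactly as in Lemma \ref{lmm: Ed}(ii).

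The one feature special to $I=I(n)$ that I would isolate first is the identification $C_{k;I(n)}\cong C_k(\C)$. This is immediate from the definitions: since $L(I(n))=\{{\bf 0}\}$ by Example \ref{exa: XI}(i), each label $s_j$ in the definition of $L_{k;I}$ is forced to equal ${\bf 0}$, so $L_{k;I(n)}\cong F(\C,k)$ and hence $C_{k;I(n)}=L_{k;I(n)}/S_k\cong F(\C,k)/S_k=C_k(\C)$. With this in hand, part (iii) follows from Lemma \ref{lemma: E11} applied in degree $d+\epsilon$: for $1\le k\le d+\epsilon$ it gives a natural isomorphism $E^{1;d+\epsilon}_{k,s}\cong\tilde H^{2nk-s}_c(C_{k;I(n)},\pm\Z)=\tilde H^{2nk-s}_c(C_k(\C),\pm\Z)$, the twisted coefficients coming, as there, from the Thom isomorphism for the affine bundle $\xi_{d+\epsilon,k}$ of Lemma \ref{lemma: vector bundle*}.

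For part (i), if $k<0$ then $\mathcal{X}^{d+\epsilon}_k=\emptyset$, so $E^{1;d+\epsilon}_{k,s}=\tilde H^{\,*}_c(\emptyset,\Z)=0$; if $k\ge d+1+\epsilon$ I would quote Remark \ref{rmk: CP} with $d$ replaced by $d+\epsilon$ --- since a common root of the $n$ monic degree-$(d+\epsilon)$ polynomials $f_0,\dots,f_{n-1}$ is in particular a root of $f_0$, there are at most $d+\epsilon$ of them, so no $(k-1)$-simplex spanned by $k$ distinct such roots occurs in the non-degenerate resolution, i.e.\ $\mathcal{X}^{d+\epsilon}_k\setminus\mathcal{X}^{d+\epsilon}_{k-1}=\emptyset$ and $E^{1;d+\epsilon}_{k,s}=0$ for all $s$. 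Part (ii) involves only the $k=0$ column, which records the (reduced) homology of a point --- reflecting that the ambient space $\P^{d+\epsilon}(\C)^n$ is contractible and $\Hol^*_{d+\epsilon}(S^2,\CP^{n-1})$ is connected --- and is insensitive to the choice of $I$; I would simply invoke the argument of Lemma \ref{lmm: Ed}(ii).

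The main, and rather modest, obstacle is purely bookkeeping: one must make sure the construction of the spectral sequence $(\ref{SS})$ and the proofs of Lemmas \ref{lemma: vector bundle*} and \ref{lemma: E11} go through verbatim for an arbitrary positive degree, so that the substitution $d\mapsto d+\epsilon$ is legitimate and the ranges ``$1\le k\le d$'' and ``$k\ge d+1$'' become exactly ``$1\le k\le d+\epsilon$'' and ``$k\ge d+1+\epsilon$''. One should also check that the degree shift in Lemma \ref{lemma: E11} is unaffected: $\xi_{d+\epsilon,k}$ has rank $l_{d+\epsilon,k}=2n(d+\epsilon-k)+k-1$, and $\bigl(2n(d+\epsilon)+k-s-1\bigr)-l_{d+\epsilon,k}=2nk-s$ independently of $\epsilon$. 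Beyond this there is nothing to do; everything is a direct consequence of Remark \ref{rmk: CP}, Lemma \ref{lemma: E11} and Example \ref{exa: XI}(i).
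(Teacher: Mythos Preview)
Your proposal is correct and matches the paper's own argument essentially verbatim: the paper states that the lemma ``easily follows from Remark \ref{rmk: CP} and Lemma \ref{lemma: E11}'' (with the identification $C_{k;I(n)}=C_k(\C)$ noted just before the lemma), which is precisely what you do. Your additional bookkeeping check that the degree shift $2nk-s$ is independent of $\epsilon$ is a welcome clarification but not a departure from the paper's route.
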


\begin{proof}[Proof of Theorem \ref{thm: CP}]

Consider the homomorphism 
$\{ \tilde{\theta}_{k,s}^t:E^{t:d}_{k,s}\to E^{t;d+1}_{k,s}\}$
of spectral sequences given in
(\ref{equ: theta1}).
By using Lemma \ref{lmm: E3}
we can easily show that 
$\tilde{\theta}^{\infty}_{k,s}:E^{\infty :d}_{k,s}\stackrel{\cong}{\longrightarrow}E^{\infty ;d+1}_{k,s}$
is an isomorphism 
for any $(k,s)$ if the condition $s-k\leq D^*(d,n)=(2n-3)(d+1)-1$
is satisfied.
Hence,
$s_d:\Hol_d^*(S^2,\CP^{n-1})\to \Hol_{d+1}^*(S^2,\CP^{n-1})$
is a homology equivalence through dimension $D^*(d,n)$.
\end{proof}

Now we can also prove Theorem \ref{thm: Segal}.
\begin{proof}[Proof of Theorem \ref{thm: Segal}]
Assume that $n\geq 3$.
It follows from Theorem \ref{thm: stable}
and Theorem \ref{thm: CP} that the inclusion 
$i_d:\Hol_d^*(S^2,\CP^{n-1})\to \Omega^2_d\CP^{n-1}\simeq \Omega^2S^{2n-1}$ is a homology equivalence
through dimension $D^*(d,n)$.
However, if $n\geq 3$, the two spaces $\Hol_d^*(S^2,\CP^{n-1})$ and
$\Omega^2S^{2n-1}$ are simply connected.
Hence, $i_d$ is a homotopy equivalence through dimension $D^*(d,n)$.
\end{proof}

\paragraph{\bf 5.2. Alternative proof.}
\ We give the another proof of
Theorem
\ref{thm: CP} by using \cite{BM} and \cite{CCMM}.
First, recall the following result.
\begin{lemma}\label{lmm: CMM}
Let $(S^{2n-3})^{[k]}$ denote the $k$-fold wedge of $S^{2n-3}$,
$(S^{2n-3})^{[k]}=S^{2n-3}\wedge \cdots
\wedge S^{2n-3}$ ($k$-times), and
$D_k(2n-3)$ the $k$-th summand of $\Omega^2S^{2n-1}$
given by
$D_k(2n-3)=F(\C,k)_+\wedge_{S_k}(S^{2n-3})^{[k]}$.
\par
Then the inclusion map
$
\hat{j}_d:\bigvee_{k=1}^d D_k(2n-3) \to \bigvee_{k=1}^{d+1}D_k(2n-3)
$
is a homology equivalence through dimension 
$D^*(d,n)=(2n-3)(d+1)-1$.
\end{lemma}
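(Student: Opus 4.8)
The plan is to compare the stable splitting of $\Omega^2S^{2n-1}$ of May--Milgram (the Snaith-type splitting used in \cite{CCMM}) with the filtration on $\Hol_d^*(S^2,\CP^{n-1})$ coming from the work of Cohen--Cohen--Mann--Milgram and Boyer--Mann. Concretely, recall that \cite{CCMM} identifies $\Hol_d^*(S^2,\CP^{n-1})$ up to homotopy (and even up to stable splitting) with the $d$-th term of a filtration whose successive quotients are exactly the summands $D_k(2n-3)=F(\C,k)_+\wedge_{S_k}(S^{2n-3})^{[k]}$ for $1\le k\le d$, so that the stable homotopy type of $\Hol_d^*(S^2,\CP^{n-1})$ agrees with that of $\bigvee_{k=1}^d D_k(2n-3)$, compatibly with stabilization. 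Under this identification the stabilization map $s_d$ corresponds to the inclusion $\hat j_d$, so proving the lemma is equivalent to estimating the connectivity of the pair $\bigl(\bigvee_{k=1}^{d+1}D_k(2n-3),\bigvee_{k=1}^d D_k(2n-3)\bigr)$.

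First I would reduce to a single homology computation: since the wedge decomposition is a coproduct, $\tilde H_*\bigl(\bigvee_{k=1}^{d+1}D_k(2n-3)\bigr)=\bigoplus_{k=1}^{d+1}\tilde H_*(D_k(2n-3))$ and likewise for the sub-wedge, and the map $\hat j_d$ is the obvious inclusion of the first $d$ summands. Hence $\hat j_d$ is a homology equivalence through dimension $D$ precisely when $\tilde H_j(D_{d+1}(2n-3),\Z)=0$ for all $j\le D$. So the whole problem collapses to computing the connectivity of $D_{d+1}(2n-3)=F(\C,d+1)_+\wedge_{S_{d+1}}(S^{2n-3})^{[d+1]}$.

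The key estimate is then the second step. The space $F(\C,d+1)_+\wedge_{S_{d+1}}(S^{2n-3})^{[d+1]}$ is built from the $(d+1)$-fold smash $(S^{2n-3})^{[d+1]}=S^{(2n-3)(d+1)}$, which is $\bigl((2n-3)(d+1)-1\bigr)$-connected, by taking the half-smash with $F(\C,d+1)_+$ over $S_{d+1}$; smashing with a nonempty based complex and forming the Borel-type construction $F(\C,d+1)_+\wedge_{S_{d+1}}(-)$ does not decrease connectivity (the quotient $F(\C,d+1)/S_{d+1}$ is path connected and the base sphere factor is a smash summand). Therefore $D_{d+1}(2n-3)$ is $\bigl((2n-3)(d+1)-1\bigr)$-connected, i.e. $\tilde H_j(D_{d+1}(2n-3),\Z)=0$ for $j\le (2n-3)(d+1)-1=D^*(d,n)$. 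Combining with the reduction above, $\hat j_d$ induces an isomorphism on $\tilde H_j$ for $j\le D^*(d,n)$, which is exactly the assertion. (One should note the bound is sharp: $\tilde H_{(2n-3)(d+1)}(D_{d+1}(2n-3))\ne 0$, so ``through dimension $D^*(d,n)$'' cannot be improved, matching the statement.)

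The main obstacle is not the homological bookkeeping but making sure one is entitled to the stable-splitting identification of $\Hol_d^*(S^2,\CP^{n-1})$ with $\bigvee_{k=1}^d D_k(2n-3)$ and, crucially, that the stabilization map $s_d$ used in this paper (Definition~\ref{dfn: SP}ff.) corresponds under that identification to the inclusion $\hat j_d$ of wedge summands. I would handle this by quoting the relevant theorems of \cite{CCMM} (and \cite{BM} for the compatibility of stabilization with the filtration), checking only that their stabilization map, defined by ``adding a root near infinity,'' agrees up to homotopy with $s_d$ here — this is immediate from Remark~\ref{rmk: sta1}(i), since the homotopy class of $s_d$ is independent of the choice of added points. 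Once that naturality is in place, everything else is the short connectivity argument above.
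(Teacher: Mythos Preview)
Your second and third paragraphs constitute a correct proof, and the strategy matches the paper's exactly: reduce to showing $\tilde H_j(D_{d+1}(2n-3),\Z)=0$ for all $j\le D^*(d,n)$. The paper justifies this vanishing by quoting the equivalence $D_k(2n-3)\simeq\Sigma^{(2n-4)k}D_k(1)$ from \cite{CMM}, identifying $D_k(1)$ as the Thom complex of the rank-$k$ bundle $F(\C,k)\times_{S_k}\R^k\to C_k(\C)$, and applying the Thom isomorphism to get $\tilde H_j(D_k(2n-3),\Z)\cong H_{j-(2n-3)k}(C_k(\C),\pm\Z)$. Your ``does not decrease connectivity'' claim reaches the same conclusion and is true (e.g.\ via the relative Serre spectral sequence for the pair $(F(\C,k)\times_{S_k}X,\,C_k(\C))$, or more directly because $D_k(2n-3)$ is itself the Thom space of the rank-$(2n-3)k$ bundle $F(\C,k)\times_{S_k}(\R^{2n-3})^k$ over the connected base $C_k(\C)$), but the parenthetical justification you give (``the base sphere factor is a smash summand'') is too vague to stand on its own.

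Your first and last paragraphs, however, do not belong to the proof of this lemma. The statement is purely about the inclusion $\hat j_d$ of wedge summands; it makes no reference to $\Hol_d^*(S^2,\CP^{n-1})$ or to $s_d$. The stable identification of $s_d$ with $\hat j_d$ via \cite{CCMM} and \cite{BM} is exactly the content of the paper's \emph{subsequent} alternative proof of Theorem~\ref{thm: CP}, for which this lemma is a preparatory step. You have folded the application back into the lemma; strip that material out and what remains is the (correct) proof.
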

\begin{proof}
Since there is a homotopy equivalence
$D_k(2n-3)\simeq \Sigma^{(2n-4)k}D_k(1)$
(\cite[Theorem 1]{CMM}) and
$D_k(1)$ is the Thom complex of the
vector bundle $F(\C,k)\times_{S_k}\R^k\to C_k(\C)$,
there is an isomorphism $\tilde{H}_j(D_k({2n-3}),\Z)\cong 
\tilde{H}_{j-(2n-4)k}(D_k(1),\Z)\cong
\tilde{H}_{j-(2n-3)k}(C_k(\C),\pm \Z)$ for any $k$
(cf. \cite[Lemma 1, page 113]{Va}).
Hence we see that
$H_j(D_{d+1}({2n-3}),\Z)=0$ for any $1\leq j\leq D^*(d,n)$ and
$\hat{j}_d$ is a homology equivalence through dimension
$D^*(d,n)$.
\end{proof}
\begin{proof}[An alternative proof of Theorem \ref{thm: CP}]
Let $D_k:=D_k(2n-3)$ and define the map
$\iota_d^n:\Hol_d^*(S^2,\CP^{n-1})\to 
\bigvee_{k=1}^d D_k(2n-3)=\bigvee_{k=1}^d D_k$
by the map of composite
\begin{equation*}
\begin{CD}
\Hol_d^*(S^2,\CP^{n-1})@>i_d>\subset>
\Omega^2_d\CP^{n-1}\simeq
\Omega^2S^{2n-1}
@>\hat{sn}>\simeq_s>
\dis
\bigvee_{k=1}^{\infty} D_k
\stackrel{pr}{\longrightarrow}
\bigvee_{k=1}^d D_k
,
\end{CD}
\end{equation*}
where
$\hat{sn}$ and $pr$ denote the Snaith splitting map \cite{Sn} and
the natural projection, respectively.
Note that $\iota^n_{d+\epsilon}$ is a stable homotopy equivalence 
for $\epsilon \in \{0,1\}$ \cite{CCMM}.
It follows from \cite{BM} that the space 
$\Hol^*(S^2,\CP^{n-1})=\coprod_{d\geq 0}\Hol_d^*(S^2,\CP^{n-1})$
has a $C_2$-structure and that the inclusion map
$i:\Hol^*(S^2,\CP^{n-1})\to \Omega^2\CP^{n-1}$ is a $C_2$-map.
Hence,  there is a  stable homotopy commutative
diagram
$$
\begin{CD}
\Hol_d^*(S^2,\CP^{n-1}) @>s_d>> \Hol_{d+1}^*(S^2,\CP^{n-1})
\\
@V{\iota_d^n}V{\simeq_s}V @V{\iota_{d+1}^n}V{\simeq_s}V
\\
\bigvee_{k=1}^d D_k @>\hat{j}_d>\subset> \bigvee_{k=1}^{d+1}D_k
\end{CD}
$$
Then
by Lemma \ref{lmm: CMM} the map $s_d$ is a homology equivalence
through dimension $D^*(d,n)$.
\end{proof}

\section{Polyhedral products and the space $X_I$.}
\label{section: polyhedral product}
In this section, we review some basic facts concerning the topology of the space
$X_I$ which were omitted in \cite{GKY1}.

\paragraph{\bf 6.1. Polyhedral products.}
We continue to assume that
$[n]=\{0,1,2\cdots ,n-1\}$.
\begin{definition}
{\rm
Let $K$ be a simplicial complex on the index set $[n]$ and let
$(\underline{X},\underline{A})=((X_0,A_0),\cdots ,(X_{n-1},A_{n-1}))$
be an $n$-tuple of pairs of spaces such that  $A_i\subset X_i$ for each
$0\leq i\leq n-1$.
Let 
$\mathcal{Z}_K(\underline{X},\underline{A})$ 
denote {\it the polyhedral product of}  $(\underline{X},\underline{A})$ 
{\it with respect to} $K$ defined
by 
\begin{equation}\label{equ: polyhedral products}
\mathcal{Z}_K(\underline{X},\underline{A})
=
\bigcup_{\sigma\in K}(\underline{X},\underline{A})^{\sigma}
\subset \prod_{k=0}^{n-1}X_k
=X_0\times \cdots \times X_{n-1},
\end{equation}
where we set
$\dis
(\underline{X},\underline{A})^{\sigma}:=
\{(x_0,\cdots ,x_{n-1})\in \prod_{k=0}^{n-1}X_k:x_k\in A_k\mbox{ if }k\notin \sigma\}.$
If $(X_j,A_j)=(X,A)$ for each $1\leq j\leq n$, we set
$(X,A)^{\sigma}=(\underline{X},\underline{A})^{\sigma}$ and
$\mathcal{Z}_K(X,A)=\mathcal{Z}_K(\underline{X},\underline{A})$.}
\end{definition}

\begin{example}[\cite{BP},  Example 6.39]\label{exa: moment-angle}
{\rm
(i)  The space $\mathcal{Z}_K=\mathcal{Z}_K(D^2,S^1)$  is  called
{\it the moment-angle complex of }$K$.
Taking $D^2=\{w\in \C: \vert w\vert \leq 1\}$ and  $S^1=\{w\in \C: \vert w\vert =1\}$, we have
$\mathcal{Z}_K\subset U(K)=\mathcal{Z}_K(\C,\C^*).$ 
\par
(ii)
The space $DJ(K)=\mathcal{Z}_K(\CP^{\infty},*)$ is  called
{\it the Davis-Januszkiwicz space of }$K$.
}
\end{example}

\begin{lemma}\label{lmm: veeI}
$\vee^I X=\mathcal{Z}_{K(I)}(X,*)$.
\end{lemma}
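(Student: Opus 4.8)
The plan is to prove the set equality $\vee^I X = \mathcal{Z}_{K(I)}(X,*)$ by unravelling both definitions and showing they cut out the same subspace of $X^n = X_0 \times \cdots \times X_{n-1}$ (with each $X_k = X$). Recall from Definition \ref{dfn: veeI} that $\vee^I X$ consists of all $(x_0,\dots,x_{n-1}) \in X^n$ such that for every $\tau \in I$ there exists $j \in \tau$ with $x_j = *$. Recall from \eqref{equ: polyhedral products} that $\mathcal{Z}_{K(I)}(X,*) = \bigcup_{\sigma \in K(I)} (X,*)^\sigma$, where $(X,*)^\sigma = \{(x_0,\dots,x_{n-1}) : x_k = * \text{ if } k \notin \sigma\}$; in other words, a point lies in $\mathcal{Z}_{K(I)}(X,*)$ iff the set $\mathrm{supp}(x) := \{k \in [n] : x_k \neq *\}$ is contained in some $\sigma \in K(I)$, which (since $K(I)$ is a simplicial complex, closed under taking subsets) is equivalent to $\mathrm{supp}(x) \in K(I)$ itself.

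With this reformulation the proof reduces to a combinatorial identity: for $x \in X^n$, the condition ``for every $\tau \in I$, some $j \in \tau$ has $x_j = *$'' is equivalent to ``$\mathrm{supp}(x) \in K(I)$.'' First I would note that ``some $j \in \tau$ has $x_j = *$'' means precisely $\tau \not\subset \mathrm{supp}(x)$. So the defining condition for $\vee^I X$ is: $\tau \not\subset \mathrm{supp}(x)$ for all $\tau \in I$. But by the description of $K(I)$ in \eqref{equ: KI}, namely $K(I) = \{\sigma \subset [n] : \tau \not\subset \sigma \text{ for any } \tau \in I\}$, this is exactly the statement $\mathrm{supp}(x) \in K(I)$. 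This establishes $\vee^I X = \mathcal{Z}_{K(I)}(X,*)$ as subsets of $X^n$, and since both carry the subspace topology from $X^n$, they coincide as spaces.

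There is essentially no hard part here — the statement is a direct translation between the ``local'' description of the generalized wedge (a constraint imposed separately by each $\tau \in I$) and the ``global'' description of the polyhedral product (a single constraint on the support). The only point requiring a word of care is the passage, within the polyhedral product, from ``$\mathrm{supp}(x) \subset \sigma$ for some $\sigma \in K(I)$'' to ``$\mathrm{supp}(x) \in K(I)$'': this uses that $K(I)$ is a genuine simplicial complex, i.e. closed under passing to subsets, which is guaranteed by Lemma \ref{lmm: KI}. One should also observe that $\mathrm{supp}(x) \neq [n]$ is automatic once $I \neq \emptyset$ (as each $\tau \in I$ is nonempty and fails to be contained in $\mathrm{supp}(x)$), consistent with Remark \ref{rmk: K(I)}(i), though this is not needed for the equality itself. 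I would write the argument as a short chain of equivalences on the membership condition, with the key reference being formula \eqref{equ: KI} for $K(I)$.
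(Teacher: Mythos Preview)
Your proposal is correct and follows essentially the same approach as the paper: both arguments reduce membership in each side to a condition on the support set $\{k:x_k\neq *\}$ and then invoke the description $K(I)=\{\sigma:\tau\not\subset\sigma\text{ for all }\tau\in I\}$ from \eqref{equ: KI}. The paper carries this out as two separate inclusions (passing through the $L_\sigma$ characterization before translating to $\tau\not\subset\sigma$), whereas you phrase it as a single chain of equivalences via the support; the content is the same.
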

\begin{proof}
First, suppose that $(x_0,\cdots ,x_{n-1})\in\mathcal{Z}_{K(I)}(X,*)$.
Then $(x_0,\cdots ,x_{n-1}) \in (X,*)^{\sigma}$
for some $\sigma\in K(I)$.
Since $\sigma\in K(I)$,
$L_{\sigma}\not\subset L_{\tau}$ for any $\tau\in I$. On the other hand,  $\sigma_1\subset \sigma_2$ if and only if
$L_{\sigma_2}\subset L_{\sigma_1}$
for $\sigma_k\in [n]$ $(k=1,2)$, hence $\tau \not\subset \sigma$ for any $\tau \in I$.
Thus, for each $\tau\in I$ there is an element $j\in \tau\setminus \sigma$.
Since $j\notin \sigma$,
$x_j=*$.
Hence, $(x_0,\cdots ,x_{n-1})\in\vee^IX$
and we conclude that $\mathcal{Z}_{K(I)}(X,*)\subset \vee^IX$.
\par
Next, suppose that $(x_0,\cdots ,x_{n-1})\in \vee^IX$ and
set $\sigma =\{i\in [n]:x_i\not=*\}$.
We shall show that $\sigma\in K(I)$.
Suppose that, on the contrary, $\sigma\notin K(I)$.
Then
$L_{\sigma}\subset \bigcup_{\tau\in I}L_{\tau}$.
Then, from the definition of coordinate subspaces we see that
there has to be some element $\tau \in I$ such that 
$L_{\sigma}\subset L_{\tau}$.
Hence,
$\tau\subset \sigma$.
Because $(x_0,\cdots ,x_{n-1})\in \vee^IX$, $x_j=*$
for some $j\in \tau\subset \sigma$.
However,  since $j\in \sigma$, $x_j\not=*$, by the definition of $\sigma$,
which is a contradiction.
Hence  $\sigma\in K(I)$.
Since 
$x_j=*$ if $j\notin \sigma$,
by the definition of $\sigma$,
we see that
$(x_0,\cdots ,x_{n-1})\in (X,*)^{\sigma}\subset \mathcal{Z}_{K(I)}(X,*)$ and hence
$\vee^IX\subset \mathcal{Z}_{K(I)}(X,*)$.
\end{proof}

\begin{corollary}\label{crl: DJ}
$DJ(K(I))=\vee^I\CP^{\infty}.$
\qed
\end{corollary}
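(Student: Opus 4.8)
The statement to prove is Corollary~\ref{crl: DJ}: $DJ(K(I)) = \vee^I\CP^{\infty}$.

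The plan is to derive this directly from Lemma~\ref{lmm: veeI} by specializing the space and basepoint. Recall that $DJ(K(I))$ is by definition (Example~\ref{exa: moment-angle}(ii)) the polyhedral product $\mathcal{Z}_{K(I)}(\CP^{\infty},*)$. On the other hand, Lemma~\ref{lmm: veeI} asserts that for \emph{any} based space $(X,*)$ one has $\vee^I X = \mathcal{Z}_{K(I)}(X,*)$. Hence the proof is simply to take $X = \CP^{\infty}$ with its chosen basepoint $*$ in Lemma~\ref{lmm: veeI}, obtaining $\vee^I\CP^{\infty} = \mathcal{Z}_{K(I)}(\CP^{\infty},*) = DJ(K(I))$.

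There is essentially no obstacle here: the corollary is a one-line consequence of the lemma together with the definition of the Davis--Januszkiewicz space. The only thing to verify is that the definition of $\vee^I X$ in Definition~\ref{dfn: veeI} is applicable to $X = \CP^{\infty}$ (it is, since $\CP^{\infty}$ is a based space), and that the basepoint conventions match, which they do by construction. So the write-up is just:

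\begin{proof}
Apply Lemma~\ref{lmm: veeI} with $X=\CP^{\infty}$ (and its chosen basepoint $*$). This gives $\vee^I\CP^{\infty}=\mathcal{Z}_{K(I)}(\CP^{\infty},*)$, and the right-hand side is $DJ(K(I))$ by the definition of the Davis--Januszkiewicz space (Example~\ref{exa: moment-angle}(ii)).
\end{proof}
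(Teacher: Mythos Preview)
Your proof is correct and matches the paper's approach exactly: the paper simply places a \qed after the statement, treating it as an immediate consequence of Lemma~\ref{lmm: veeI} (specialized to $X=\CP^{\infty}$) together with the definition $DJ(K)=\mathcal{Z}_K(\CP^{\infty},*)$ from Example~\ref{exa: moment-angle}(ii).
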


\begin{example}\label{exa: DJ(K)}
$\I$
{\rm
If $I=J(n)$,
$\dis DJ(K(J(n)))
=\CP^{\infty}\vee
\cdots \vee \CP^{\infty}$
$(n$-times).}
\par
$\II$
{\rm If $I=I(n)$,
$\dis DJ(K(I(n)))
=F_n(\CP^{\infty}),$
where
$F_n(X)$ denotes the fat wedge
defined by
$F_n(X)=
\{(x_0,\cdots ,x_{n-1})\in X^n:
x_j=*\mbox{ for some }0\leq j\leq n-1\}.$
}
\end{example}

\begin{lemma}[\cite{BP}]\label{lmm: moment-angle}
Let $K$ be a simplicial complex on the index set $[n]$, and let
$\T^n$ denote the $n$ dimensional torus, $\T^n=(S^1)^n$.
\par
$\I$ 
There is a $\T^n$-equivariant deformation retraction
$r:U(K)\stackrel{\simeq}{\longrightarrow}\mathcal{Z}_K$.
\par $\II$
$\mathcal{Z}_K$ is the
 homotopy fibre of the inclusion map
$DJ(K)\stackrel{\subset}{\longrightarrow}(\CP^{\infty})^n$.
\end{lemma}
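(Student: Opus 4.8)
The plan is to handle the two parts by quite different means: (i) is a point-set construction of an explicit equivariant deformation retraction, while (ii) is a homotopy-theoretic identification via the Borel construction. For (i), recall from Example \ref{exa: moment-angle} that $U(K)=\mathcal{Z}_K(\C,\C^*)$ and $\mathcal{Z}_K=\mathcal{Z}_K(D^2,S^1)$. One wants a $\T^n$-equivariant homotopy from $\mathrm{id}_{U(K)}$ to a retraction onto $\mathcal{Z}_K$ which, in each coordinate, pushes $z_i$ with $|z_i|\ge 1$ radially towards $S^1$ and keeps $z_i$ with $|z_i|\le 1$ inside $D^2$; since any such rule may be taken to depend only on the moduli $|z_0|,\dots,|z_{n-1}|$, it is automatically invariant under the coordinatewise $\T^n$-action. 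The subtlety is that there is \emph{no} coordinatewise retraction of pairs $(\C,\C^*)\to(D^2,S^1)$, so the homotopy must be organised along the face filtration of $K$ --- roughly, on a point $z$ with zero-set $\omega(z)=\{i:z_i=0\}\in K$ one first expands the coordinates indexed by the minimal face of $K$ containing $\omega(z)$, then those indexed by larger faces --- which is exactly the construction carried out in \cite{BP}, and which I would simply quote.

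For (ii), write $E\T^n=\prod_{i=0}^{n-1}ES^1$ with the coordinatewise $\T^n=(S^1)^n$-action, so that $B\T^n=E\T^n/\T^n=(\CP^\infty)^n$. The first step is to observe that the Borel construction commutes with the polyhedral product: since $\mathcal{Z}_K=\bigcup_{\sigma\in K}(D^2,S^1)^\sigma$ is a union of products and the homotopy quotient of a product of $S^1$-spaces by the product group is the product of the homotopy quotients, one obtains a homeomorphism
$$
E\T^n\times_{\T^n}\mathcal{Z}_K\ \cong\ \mathcal{Z}_K(E,E_\partial),
\qquad E=ES^1\times_{S^1}D^2,\quad E_\partial=ES^1\times_{S^1}S^1,
$$
where $S^1$ acts on $D^2\supset S^1$ by multiplication. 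The second step: $D^2$ is $S^1$-equivariantly contractible to its centre, so $E\simeq\CP^\infty=BS^1$, while $E_\partial\cong ES^1$ is contractible and $E_\partial\hookrightarrow E$ is a cofibration (a subcomplex inclusion in suitable models). Hence $(E,E_\partial)$ is homotopy equivalent to $(\CP^\infty,*)$ as cofibred pairs, and since the polyhedral product sends homotopy equivalences of cofibred pairs to homotopy equivalences (a standard gluing-lemma argument, cf. \cite{BP}) we get $E\T^n\times_{\T^n}\mathcal{Z}_K\simeq\mathcal{Z}_K(\CP^\infty,*)=DJ(K)$.

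The last, and most delicate, step is to match up the maps to $(\CP^\infty)^n$. Applying the Borel construction to the $\T^n$-equivariant inclusion $\mathcal{Z}_K\hookrightarrow(D^2)^n$ yields a commutative square whose lower edge, after contracting $(D^2)^n$ to its centre, is the Borel fibration projection $p\colon E\T^n\times_{\T^n}\mathcal{Z}_K\to B\T^n=(\CP^\infty)^n$ --- a fibre bundle with fibre $\mathcal{Z}_K$ --- while under the equivalence of the previous paragraph the square becomes the inclusion $j\colon DJ(K)\hookrightarrow(\CP^\infty)^n$, provided one chooses the pair equivalence $E\to\CP^\infty$ to induce on $\pi_2$ the same map as the bundle projection $E\to BS^1$ (possible after composing with a self-equivalence of $(\CP^\infty,*)$, since $\CP^\infty$ is a $K(\Z,2)$). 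Then $j\circ\Phi\simeq p$ for the equivalence $\Phi\colon E\T^n\times_{\T^n}\mathcal{Z}_K\xrightarrow{\ \simeq\ }DJ(K)$, so the homotopy fibre of $j$ agrees with that of $p$, namely $\mathcal{Z}_K$, which is the assertion of (ii). I expect this bookkeeping with base points and homotopy classes, rather than any single hard idea, to be the main obstacle.
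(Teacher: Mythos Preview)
Your proposal is correct and follows precisely the route taken in \cite{BP}; the paper itself gives no argument at all beyond the one-line citation ``Both assertions follow from \cite[Theorem 8.9, Corollary 6.30]{BP}.'' Your sketch of (i) (the face-filtration retraction) and (ii) (identifying the Borel construction $E\T^n\times_{\T^n}\mathcal{Z}_K$ with $DJ(K)$ via the pair equivalence $(ES^1\times_{S^1}D^2,\,ES^1\times_{S^1}S^1)\simeq(\CP^\infty,*)$, then reading off the homotopy fibre from the Borel fibration) is exactly the content of those references, so there is nothing to add.
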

\begin{proof}
Both assertions follow from \cite[Theorem 8.9, Corollary 6.30]{BP}.
\end{proof}

\paragraph{\bf 6.2. The topology of the space $X_I$.}
\ 
Next, we review several basic facts concerning the topology of the space $X_I$.
\begin{lemma}[\cite{GKY1}]\label{lmm: pi2XI}
The space $X_I$ is simply-connected and
$\pi_2(X_I)=\Z$.
\end{lemma}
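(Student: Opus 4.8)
The plan is to read off both assertions from the polyhedral-product description of $X_I$ established earlier, rather than arguing directly with the homogeneous-coordinate quotient $(\C^n\setminus L(I))/\C^*$. By Lemma \ref{lmm: KI} we have $Y_I=U(K(I))$, and by Remark \ref{rmk: K(I)}(ii) this gives $X_I=U(K(I))/\C^*$. The key input will be the fibration sequence relating the moment-angle complex $\mathcal{Z}_{K(I)}$, the Davis--Januszkiewicz space $DJ(K(I))$ and $(\CP^{\infty})^n$ from Lemma \ref{lmm: moment-angle}(ii), together with the $\T^n$-equivariant deformation retraction $U(K(I))\simeq\mathcal{Z}_{K(I)}$ of Lemma \ref{lmm: moment-angle}(i) and Corollary \ref{crl: DJ} identifying $DJ(K(I))$ with $\vee^I\CP^{\infty}$.

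First I would set up the bundle $p_I\colon Y_I\to X_I$ from (\ref{equ: pI}), a principal $\C^*$-bundle, hence (up to homotopy) a principal $S^1$-bundle, so that there is a fibration $S^1\to Y_I\to X_I$ and a classifying map $X_I\to BS^1=\CP^{\infty}$. Using the $\T^n$-retraction, $Y_I\simeq\mathcal{Z}_{K(I)}$; the diagonal $S^1\hookrightarrow\T^n$ acts compatibly, and the quotient of $\mathcal{Z}_{K(I)}$ by this circle is a model for $X_I$. The homotopy-fibre description $\mathcal{Z}_{K(I)}\to DJ(K(I))\to(\CP^{\infty})^n$ then feeds into a Serre spectral sequence / long exact sequence argument. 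Since $Y_I=\C^n\setminus L(I)$ is a complement of coordinate subspaces each of complex codimension $\ge 2$ (because $\mathrm{card}(\sigma)\ge 2$ for all $\sigma\in I$), $Y_I$ is simply connected and $\pi_2(Y_I)=0$: indeed $L(I)$ has real codimension $\ge 4$ in $\C^n$, so $Y_I$ is $2$-connected. From the homotopy exact sequence of $S^1\to Y_I\to X_I$ we then get $\pi_1(X_I)\cong\pi_0(S^1)=0$ and an isomorphism $\pi_2(X_I)\xrightarrow{\ \cong\ }\pi_1(S^1)=\Z$ (with $\pi_2(Y_I)=0$ on the left and $\pi_1(Y_I)=0$ on the right of the relevant piece of the sequence). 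This gives both claims at once.

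The main obstacle is making rigorous the claim that $Y_I$ is $2$-connected; this is the one genuinely geometric point. I would argue that $\bigcup_{\sigma\in I}L_\sigma$ is a finite union of linear subspaces of $\C^n$ each of real codimension $\ge 4$, and that a general-position / transversality argument (or a Mayer--Vietoris induction on $I$, using that intersections of coordinate subspaces are again coordinate subspaces of codimension $\ge 4$) shows that removing such a set from $\C^n$ does not affect $\pi_i$ for $i\le 2$. Equivalently, one can invoke the homotopy decomposition of $\mathcal{Z}_K$ after one suspension, or simply note that $U(K(I))$ deformation retracts onto $\mathcal{Z}_{K(I)}\subset (D^2)^n$, whose cells not already in $(D^2)^n$'s interior are attached in dimension $\ge 4$, whence $\mathcal{Z}_{K(I)}$ is $2$-connected. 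Once $2$-connectivity of $Y_I$ is in hand, the remainder is the routine exact-sequence bookkeeping sketched above, so I expect the write-up to be short.
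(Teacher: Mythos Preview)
Your proposal is correct and follows essentially the same route as the paper: establish that $Y_I$ is $2$-connected and then read off $\pi_1(X_I)=0$ and $\pi_2(X_I)\cong\pi_1(S^1)=\Z$ from the long exact homotopy sequence of the principal $\C^*$-bundle $Y_I\to X_I$. The paper obtains the $2$-connectivity of $Y_I$ by invoking the deformation retraction $Y_I=U(K(I))\simeq\mathcal{Z}_{K(I)}$ and citing \cite[Theorem 6.33]{BP} that moment-angle complexes are $2$-connected, whereas your primary argument is the direct observation that $L(I)\subset\C^n$ has real codimension $\ge 4$; both are valid, and the Davis--Januszkiewicz fibration you announce in the first paragraph is not actually needed for this lemma.
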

\begin{proof}
There is a homotopy equivalence
$Y_I=U(K(I))\simeq \mathcal{Z}_{K(I)}$ and
$\mathcal{Z}_{K(I)}$ is $2$-connected
\cite[Theorem 6.33]{BP}. Thus $Y_I$ is $2$-connected.
Now the assertion easily follows from the $\C^*$-principal bundle sequence (\ref{equ: pI}).
\end{proof}
We will next consider a fibration sequence 
(\ref{equ: Segal type fibration}), which generalizes the fibration sequence given
in \cite[page 44]{Se} and
was used in \cite{GKY1}.  But the details of its 
construction were omitted in \cite{GKY1}, and instead the reader was referred to an analogous argument in \cite{Se}. 
For the sake of completeness of this paper, we give a different proof by using  basic properties of polyhedral products.

\begin{proposition}[\cite{GKY1}]\label{lmm: fibration}
There is a fibration sequence up to homotopy equivalence
\begin{equation}\label{equ: Segal type fibration}
X_I \stackrel{q_I}{\longrightarrow}\vee^I \CP^{\infty}
\stackrel{}{\longrightarrow} (\CP^{\infty})^{n-1}.
\end{equation}
\end{proposition}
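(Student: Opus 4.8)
The plan is to derive the fibration sequence (\ref{equ: Segal type fibration}) from the general theory of polyhedral products assembled in \S\ref{section: polyhedral product}, rather than from Segal's scanning argument. The starting point is the identification $X_I = U(K(I))/\C^*$ from (\ref{equ: XI=U(K)/C}) together with the $\T^n$-equivariant deformation retraction $r:U(K(I))\xrightarrow{\simeq}\mathcal{Z}_{K(I)}$ of Lemma \ref{lmm: moment-angle}(i). Since this retraction is $\T^n$-equivariant, in particular it is equivariant for the diagonal circle $\T^1\subset \T^n$, whose complexification is exactly the $\C^*$ acting on $U(K(I))=Y_I$. Passing to quotients, we get a homotopy equivalence $X_I = Y_I/\C^* \simeq \mathcal{Z}_{K(I)}/\T^1$, where $\T^1$ is the diagonal circle.

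Next I would bring in Lemma \ref{lmm: moment-angle}(ii): $\mathcal{Z}_{K(I)}$ is the homotopy fibre of the inclusion $DJ(K(I))\hookrightarrow (\CP^{\infty})^n$. Using Corollary \ref{crl: DJ}, $DJ(K(I))=\vee^I\CP^{\infty}$, so we have a fibration sequence
\begin{equation}\label{equ: plan-fib}
\mathcal{Z}_{K(I)}\longrightarrow \vee^I\CP^{\infty}\longrightarrow (\CP^{\infty})^n.
\end{equation}
The key point is to understand the diagonal $\T^1$-action on $\mathcal{Z}_{K(I)}$ in terms of this fibration. The Borel construction / homotopy-quotient perspective is the cleanest: the map $\mathcal{Z}_{K(I)}\to DJ(K(I))$ realizes $DJ(K(I))$ as the homotopy quotient $\mathcal{Z}_{K(I)}\times_{\T^n}E\T^n$ (this is essentially the definition of $DJ(K)$), and dividing only by the diagonal circle gives an intermediate homotopy quotient $\mathcal{Z}_{K(I)}/\T^1 \simeq \mathcal{Z}_{K(I)}\times_{\T^1}E\T^1$, which fibres over $B\T^1=\CP^{\infty}$ with fibre $\mathcal{Z}_{K(I)}$, and also fibres over $DJ(K(I))=\vee^I\CP^{\infty}$ with fibre $\T^n/\T^1 \simeq \T^{n-1}$, hence over $\vee^I\CP^{\infty}$ after one more delooping one reads off $(\CP^{\infty})^{n-1}$. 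Concretely: the composite $\mathcal{Z}_{K(I)}/\T^1 \to DJ(K(I)) \to (\CP^{\infty})^n \to (\CP^{\infty})^n/(\text{diagonal }\CP^{\infty}) = (\CP^{\infty})^{n-1}$ has homotopy fibre $\mathcal{Z}_{K(I)}/\T^1\simeq X_I$, which yields precisely (\ref{equ: Segal type fibration}).

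I expect the main obstacle to be the careful bookkeeping of the various group actions and the identification of the quotient $\mathcal{Z}_{K(I)}/\T^1$ with the homotopy fibre of $\vee^I\CP^{\infty}\to(\CP^{\infty})^{n-1}$, rather than with the homotopy fibre of $\vee^I\CP^{\infty}\to(\CP^{\infty})^{n}$. This amounts to checking that quotienting the total space $\mathcal{Z}_{K(I)}$ by the diagonal $\T^1$ corresponds, on the base side, to quotienting $(\CP^{\infty})^n$ by the diagonally embedded $\CP^{\infty}$ (equivalently, to multiplying the fibre $\mathcal{Z}_{K(I)}$ by $B\T^1$ on one side while dividing the base $(\CP^{\infty})^n$ by $\CP^{\infty}$ on the other). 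A clean way to make this rigorous is to work entirely with Borel constructions: form $E\T^n\times_{\T^n}\mathcal{Z}_{K(I)}=DJ(K(I))$, note the fibration $DJ(K(I))\to B\T^n=(\CP^{\infty})^n$ with fibre $\mathcal{Z}_{K(I)}$, then use that the diagonal $\T^1\hookrightarrow\T^n$ induces $B\T^1=\CP^{\infty}\hookrightarrow (\CP^{\infty})^n$ diagonally and that $X_I\simeq \mathcal{Z}_{K(I)}/\T^1$ sits in the pullback of $DJ(K(I))\to(\CP^{\infty})^n$ along $\CP^{\infty}\hookrightarrow(\CP^{\infty})^n$; the long exact sequence of the resulting fibration, or a direct fibre comparison, then identifies the cofibre direction as $(\CP^{\infty})^{n-1}$. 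Throughout, the only inputs needed are Lemma \ref{lmm: moment-angle}, Corollary \ref{crl: DJ}, and the homogeneous-coordinate description (\ref{equ: XI=U(K)/C}); everything else is standard manipulation of fibrations and Borel constructions.
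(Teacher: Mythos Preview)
Your approach is correct and is essentially the paper's own argument, only packaged differently: both proofs identify $\vee^I\CP^{\infty}$ with the Borel construction $E\T^n\times_{\T^n}\mathcal{Z}_{K(I)}\simeq E\T^n_{\C}\times_{\T^n_{\C}}Y_I$ (via Lemma~\ref{lmm: moment-angle} and Corollary~\ref{crl: DJ}) and then pass from $B\T^n$ to $B\T^{n-1}=B(\T^n/\T^1_{\mathrm{diag}})$ to obtain the desired fibre $X_I$. The paper does this by writing down two separate Borel constructions (for $(\C^*)^n$ on $Y_I$ and for $(\C^*)^{n-1}$ on $X_I$) and comparing them through an explicit $3\times 4$ diagram of fibrations, whereas you read off the homotopy fibre of $DJ(K(I))\to B\T^n\to B\T^{n-1}$ directly as the intermediate homotopy quotient $\mathcal{Z}_{K(I)}\times_{\T^1}E\T^1\simeq\mathcal{Z}_{K(I)}/\T^1\simeq X_I$; this is the same computation from a slightly different angle.

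Two minor points to tighten in a write-up: first, the identification $\mathcal{Z}_{K(I)}\times_{\T^1}E\T^1\simeq\mathcal{Z}_{K(I)}/\T^1$ requires the diagonal $\T^1$-action on $\mathcal{Z}_{K(I)}$ to be free, which holds because $[n]\notin K(I)$ (Remark~\ref{rmk: K(I)}(i)) forces every point of $\mathcal{Z}_{K(I)}$ to have at least one coordinate on $S^1$; second, your sentence ``the composite $\mathcal{Z}_{K(I)}/\T^1\to\cdots$ has homotopy fibre $\mathcal{Z}_{K(I)}/\T^1$'' is misstated---what you mean (and what your final paragraph correctly says) is that the map $DJ(K(I))\to(\CP^{\infty})^{n-1}$ has that homotopy fibre.
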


\begin{proof}
Let  $G=(\C^*)^n=\T^n_{\C}$ and  consider the natural $G$-action on  $Y_I$
by coordinate-wise multiplication.
This action naturally induces a fibration sequence
\begin{equation}\label{equ: fib1}
G=\T_{\C}^n
\stackrel{\iota_1^{\p}}{\longrightarrow} Y_I \stackrel{\iota_1}{\longrightarrow} EG\times_G Y_I \stackrel{q_1}{\longrightarrow} BG=(\CP^{\infty})^n,
\end{equation}
where the map $\iota^{\p}_1$ is the inclusion map.
Similarly, let $G_1=\T_{\C}^{n-1}$ and 
recall
the $G_1$-action on
$X_I$ given by (\ref{equ: T-action}).
This also induces a fibration sequence 
\begin{equation}\label{equ: fib2}
G_1=\T_{\C}^{n-1}
\stackrel{\iota_2^{\p}}{\longrightarrow}X_I \stackrel{\iota_2}{\longrightarrow} EG_1\times_{G_1} X_I \stackrel{q_2}{\longrightarrow} BG_1=(\CP^{\infty})^{n-1},
\end{equation}
where the map
$\iota_2^{\p}$ is given by
$\iota_2^{\p}(t_1,\cdots ,t_{n-1})=[1:t_1:\cdots :t_{n-1}]$.
If $\pi_1:G \to G_1$ denotes the map given by
$
\pi_1(t_0,\cdots ,t_{n-1})=(t_0^{-1}t_1,\cdots ,t_0^{-1}t_{n-1}),
$ 
we have a commutative diagram of principal bundles
\begin{equation}\label{equ: action-diagram}
\begin{CD}
\C^*\times \C^* @>>> G\times Y_I @>{\pi_1\times p_I}>> G_1\times X_I
\\
@V{a^{\p\p}}VV @V{a}VV @V{a^{\p}}VV
\\
\C^* @>>>Y_I @>p_I>> X_I
\end{CD}
\end{equation}
where 
three vertical maps $a$, $a^{\p}$ and $a^{\p\p}$ are given by the group actions. 
Now let  $f:EG\times_GY_I\to EG_1\times_{G_1}X_I$ denote the map defined by
$f=E\pi_1\times_{\pi_1}p_I$ and let $F$ be its homotopy fibre.
Then
it follows from the diagram (\ref{equ: action-diagram}) and \cite[(2.1)]{CMN}  
that there is a homotopy commutative diagram
\begin{equation*}\label{equ: diagram of fibration}
\begin{CD}
\C^* @>g>>\C^* @>>> F @>>> B\C^*=\CP^{\infty}
\\
@VVV @VVV @VVV @VVV
\\
G @>\iota_1^{\p}>>Y_I @>\iota_1>> EG\times_GY_I @>q_1>> BG=(\CP^{\infty})^n
\\
@V{\pi_1}VV @V{p_I}VV @V{f}VV @V{B\pi_1}VV
\\
G_1 @>\iota_2^{\p}>> X_I @>\iota_2>> EG_1\times_{G_1}X_I @>q_2>> BG_1=(\CP^{\infty})^{n-1}
\end{CD}
\end{equation*}
where  all the horizontal and vertical sequences are fibration
sequences.
From the definitions of the maps $\iota_1^{\p}$ and $\iota_2^{\p}$,
we  see that the map $g$ is the identity (up to homotopy).
Hence, $F$ is contractible and $f$ is a homotopy equivalence.
Thus there is a fibration sequence (up to homotopy)
\begin{equation}\label{equ: fib4}
X_I
\stackrel{}{\longrightarrow}
EG\times_GY_I
\stackrel{}{\longrightarrow}
BG_1=(\CP^{\infty})^{n-1}.
\end{equation}
\par
On the other hand,
it follows from the proof of 
\cite[Corollary 6.29]{BP} that
there is a homotopy equivalence
\begin{equation}\label{equ: homotopy equiv}
E\T^{n}\times_{\T^{n}}\mathcal{Z}_{K(I)}
\simeq
DJ(K(I)).
\end{equation}
Hence, by Lemma \ref{lmm: KI}, Lemma \ref{lmm: moment-angle},
(\ref{equ: homotopy equiv}) and
Corollary \ref{crl: DJ},
we have homotopy equivalences
\begin{eqnarray*}
EG\times_GY_I&=&EG\times_GU(K(I))
\simeq
E\T^{n}\times_{\T^{n}}\mathcal{Z}_{K(I)}
\simeq 
DJ(K(I))
=
\vee^I\CP^{\infty}.
\end{eqnarray*}
Substituting into 
(\ref{equ: fib4}), we obtain the
the fibration sequence
(\ref{equ: Segal type fibration}).
\end{proof}
\begin{corollary}\label{crl: equivalence}
The map
$\Omega^2q_I:\Omega^2_0X_I
\stackrel{\simeq}{\longrightarrow}
\Omega^2(\vee^I\CP^{n-1})$
is a homotopy equivalence.
\qed
\end{corollary}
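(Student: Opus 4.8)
The plan is to obtain the equivalence by looping the fibration sequence of Proposition \ref{lmm: fibration} twice and observing that its base becomes homotopically discrete. First I would record that $\CP^{\infty}$ is an Eilenberg--MacLane space $K(\Z,2)$, so that $(\CP^{\infty})^{n-1}\simeq K(\Z^{n-1},2)$ and hence $\Omega^2((\CP^{\infty})^{n-1})\simeq K(\Z^{n-1},0)$ has the homotopy type of the discrete group $\Z^{n-1}$; in particular $\pi_k(\Omega^2((\CP^{\infty})^{n-1}))=0$ for all $k\geq 1$ and all base points. Applying $\Omega^2$ to the homotopy fibration sequence $X_I \stackrel{q_I}{\longrightarrow} \vee^I\CP^{\infty}\longrightarrow (\CP^{\infty})^{n-1}$ (and using that applying $\Omega^2$ to a homotopy fibration sequence again produces a homotopy fibration sequence) yields the homotopy fibration sequence
$$
\Omega^2 X_I \stackrel{\Omega^2 q_I}{\longrightarrow} \Omega^2(\vee^I\CP^{\infty})\longrightarrow \Omega^2((\CP^{\infty})^{n-1}).
$$

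Next I would run the long exact homotopy sequence of this fibration based at the constant loop. Since all positive homotopy groups of $\Omega^2((\CP^{\infty})^{n-1})$ vanish, exactness forces $\Omega^2 q_I$ to induce isomorphisms $\pi_k(\Omega^2 X_I)\stackrel{\cong}{\longrightarrow}\pi_k(\Omega^2(\vee^I\CP^{\infty}))$ for every $k\geq 1$. Because $q_I$ is a based map, $\Omega^2 q_I$ sends the component $\Omega^2_0 X_I$ of the constant loop into the component $\Omega^2_0(\vee^I\CP^{\infty})$ of the constant loop; since both components are path connected, the restricted map $\Omega^2 q_I\colon \Omega^2_0 X_I\to \Omega^2_0(\vee^I\CP^{\infty})$ is a weak homotopy equivalence. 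Both $X_I$ and $\vee^I\CP^{\infty}=DJ(K(I))$ have the homotopy type of CW complexes, hence so do their iterated loop spaces and the path components thereof, so by Whitehead's theorem this weak equivalence is an honest homotopy equivalence.

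Finally, to recognise the target as the one in the statement, I would use that $\vee^I\CP^{\infty}=DJ(K(I))=\mathcal{Z}_{K(I)}(\CP^{\infty},*)$ is simply connected (Corollary \ref{crl: DJ} together with van Kampen's theorem, or \cite[Theorem 6.33]{BP}): then $\Omega(\vee^I\CP^{\infty})$ is connected, so loop translation makes all path components of $\Omega^2(\vee^I\CP^{\infty})$ homotopy equivalent, and in particular $\Omega^2_0(\vee^I\CP^{\infty})\simeq \Omega^2(\vee^I\CP^{\infty})$; reading the superscript $n-1$ in the statement as $\infty$ (in agreement with the convention of Theorem \ref{thm: GKY1}, where $\Omega^2_d X_I$ is identified with $\Omega^2(\vee^I\CP^{\infty})$ for every $d$) then yields exactly the asserted homotopy equivalence $\Omega^2 q_I\colon\Omega^2_0 X_I\stackrel{\simeq}{\longrightarrow}\Omega^2(\vee^I\CP^{\infty})$. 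I do not expect a genuine obstacle; the only point requiring care is the bookkeeping of path components above --- that $\Omega^2 q_I$ carries the degree-$0$ component of $\Omega^2 X_I$ onto (a homotopy equivalent copy of) the correct component of $\Omega^2(\vee^I\CP^{\infty})$ --- which reduces, as indicated, to the based-ness of $q_I$, the discreteness of the looped base $\Omega^2((\CP^{\infty})^{n-1})$, and the simple-connectivity of $\vee^I\CP^{\infty}$.
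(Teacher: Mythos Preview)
Your argument is correct and is essentially the same route the paper intends: the corollary is stated with a bare \qed, i.e.\ it is meant to follow immediately from Proposition~\ref{lmm: fibration} by double-looping the fibration sequence and using that $\Omega^2((\CP^{\infty})^{n-1})$ is homotopically discrete. Your observation that the target should read $\vee^I\CP^{\infty}$ rather than $\vee^I\CP^{n-1}$ is also right; this is a typographical slip in the statement, consistent with Theorem~\ref{thm: GKY1} and Remark~\ref{rmk: GKY1}.
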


\section{The fan of ${\bf \Sigma}_I$ and the proof of the main result.}
\label{section: fan}
In this section we shall consider the fan ${\bf \Sigma}_{I}$
associated to the toric variety $X_I$ and give a proof of our main result
(Theorem \ref{thm: I}).
\paragraph{\bf 7.1. The fan $\Sigma_I$.}
First, consider the fan $\Sigma_I$ of the toric variety $X_I$.
\begin{definition}\label{dfn: Cone}
{\rm
Let
$\{\textbf{\textit{e}}_1,\cdots ,\textbf{\textit{e}}_{n-1}\}$ be the standard basis of 
$\R^{n-1}$
and set
$\textbf{\textit{e}}_0=-\sum_{k=1}^{n-1}\textbf{\textit{e}}_k$,
where 
$\textbf{\textit{e}}_1=(1,0,\cdots ,0),
\cdots ,
\ \textbf{\textit{e}}_{n-1}=(0,\cdots ,0,1)\in\R^{n-1}.$ 
\par
For a proper subset $\sigma \subsetneqq [n]$, let
${\rm Cone}_{\sigma}$ denote \textit{the rational polyhedral cone}
in $\R^{n-1}$ defined by
\begin{equation*}\label{equ: cone}
{\rm Cone}_{\sigma}=
\begin{cases}
{\rm Cone}(\textbf{\textit{e}}_{i_1},\cdots ,\textbf{\textit{e}}_{i_s})=
\Big\{\sum_{k=1}^sa_k\textbf{\textit{e}}_{i_k}:
a_k\geq 0\Big\} & \mbox{if }\sigma=\{i_1,\cdots ,i_s\},
\\
\{{\bf 0}\} & \mbox{if }\sigma =\emptyset .
\end{cases}
\end{equation*}
}
\end{definition}
As ${\bf \Sigma}_{I(n)}$ is the fan of the toric variety $\CP^{n-1}$, it is well known that
\begin{equation}\label{emu: fan Sigma (n)}
{\bf \Sigma}_{I(n)}=
\big\{
{\rm Cone}_{\sigma}
:
\sigma\subsetneqq [n]
\big\}.
\end{equation}
Since $X_I$ is a $\T^{n-1}_{\C}$-invariant subspace of $\CP^{n-1}$,
we have:
\begin{equation}\label{equ: cones}
 {\bf \Sigma}_I\subset 
 {\bf \Sigma}_{I(n)}
 =
 \big\{
{\rm Cone}_{\sigma}
:
\sigma\subsetneqq [n]
\big\}.
\end{equation}
By part (i) of Remark \ref{rmk: K(I)},
we also have:
\begin{equation}\label{equ: fanK(I)}
\big\{\mbox{\rm Cone}_{\sigma}:
\sigma\in K(I)\big\}\subset
 {\bf \Sigma}_{I(n)}
 =
 \big\{
{\rm Cone}_{\sigma}
:
\sigma\subsetneqq [n]
\big\}.
\end{equation}
Since $K(I)$ is a simplicial complex,
we easily see that the set 
$\big\{\mbox{\rm Cone}_{\sigma}:
\sigma\in K(I)\big\}$ is a fan in $\R^{n-1}$.
In fact,
we can show the following:

\begin{proposition}\label{prp: Sigma (I)}
${\bf \Sigma}_I=
\big\{\mbox{\rm Cone}_{\sigma}:
\sigma\in K(I)\big\}.$
\end{proposition}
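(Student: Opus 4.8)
The plan is to identify the fan $\mathbf{\Sigma}_I$ of $X_I$ by locating exactly which torus orbits of $\CP^{n-1}$ survive inside $X_I$, and then matching these orbits with the cones $\mathrm{Cone}_\sigma$ for $\sigma \in K(I)$. Recall that for the toric variety $\CP^{n-1}$ with fan $\mathbf{\Sigma}_{I(n)} = \{\mathrm{Cone}_\sigma : \sigma \subsetneqq [n]\}$, the orbit–cone correspondence assigns to each cone $\mathrm{Cone}_\sigma$ (with $\sigma = \{i_1,\dots,i_s\}$) the torus orbit $O_\sigma = \{[x_0:\cdots:x_{n-1}] : x_j = 0 \iff j \in \sigma\}$, whose closure is precisely $H_\sigma$ from Example \ref{exa: XI}(iii). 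Since $X_I = \CP^{n-1} \setminus \bigcup_{\tau\in I} H_\tau$ is a $\T^{n-1}_\C$-invariant open subvariety of $\CP^{n-1}$, its fan is the subfan of $\mathbf{\Sigma}_{I(n)}$ consisting of those cones whose associated orbits lie in $X_I$; this is exactly the general fact that open toric subvarieties correspond to subfans (e.g. \cite{CLS}).

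The key step is therefore the orbit bookkeeping. First I would observe that $O_\sigma \subset X_I$ if and only if $O_\sigma$ meets none of the removed subsets $H_\tau$ ($\tau \in I$), and since distinct orbits are disjoint and $H_\tau = \overline{O_\tau} = \bigcup_{\tau \subset \sigma'} O_{\sigma'}$, this happens precisely when $\tau \not\subset \sigma$ for every $\tau \in I$. By the second description of $K(I)$ in \eqref{equ: KI}, namely $K(I) = \{\sigma \subset [n] : \tau \not\subset \sigma \text{ for any }\tau\in I\}$, this condition says exactly $\sigma \in K(I)$. Combining with Remark \ref{rmk: K(I)}(i), which guarantees $\sigma \neq [n]$ for $\sigma \in K(I)$ so that $\mathrm{Cone}_\sigma$ is genuinely a cone of $\mathbf{\Sigma}_{I(n)}$, we conclude $\mathbf{\Sigma}_I = \{\mathrm{Cone}_\sigma : \sigma \in K(I)\}$. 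The inclusion $\{\mathrm{Cone}_\sigma : \sigma\in K(I)\} \subset \mathbf{\Sigma}_{I(n)}$ is already recorded in \eqref{equ: fanK(I)}, and that this collection is a fan follows from $K(I)$ being a simplicial complex (closure under passing to faces corresponds to closure of the cone collection under taking faces), as already noted before the statement.

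The main obstacle, such as it is, is purely expository: one must be careful that the convention relating subsets $\sigma$ to cones is the \emph{correct} one, i.e. that $\mathrm{Cone}_\sigma = \mathrm{Cone}(\mathbf{e}_{i_1},\dots,\mathbf{e}_{i_s})$ corresponds to the orbit where the coordinates \emph{indexed by} $\sigma$ vanish (and not its complement). This is consistent with \eqref{emu: fan Sigma (n)}: the maximal cones of $\CP^{n-1}$ are the $\mathrm{Cone}_\sigma$ with $\mathrm{card}(\sigma) = n-1$, and each corresponds to a fixed point $[0:\cdots:1:\cdots:0]$, i.e. an orbit where all but one coordinate vanish — so $\sigma$ indeed labels the vanishing coordinates. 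Once this convention is pinned down, the argument is a direct application of the orbit–cone correspondence together with the combinatorial identity \eqref{equ: KI}, and I would not expect any genuine difficulty; the proof is essentially a two-line translation between the geometric description $X_I = \CP^{n-1}\setminus\bigcup_{\tau\in I}H_\tau$ and the combinatorial one.
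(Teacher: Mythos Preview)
Your proposal is correct and follows essentially the same approach as the paper: both arguments use the orbit--cone correspondence for $\CP^{n-1}$ to determine which cones $\mathrm{Cone}_\sigma$ survive in $\mathbf{\Sigma}_I$, and both reduce this to the combinatorial condition $\tau\not\subset\sigma$ for all $\tau\in I$, i.e.\ $\sigma\in K(I)$. The only cosmetic difference is that the paper carries out the orbit--cone correspondence explicitly via limits $\lim_{t\to 0}\chi^{\textbf{\textit{u}}}(t)=[\epsilon]$ of one-parameter subgroups (checking directly that $[\epsilon]\in X_I$ iff $\sigma(\epsilon)\in K(I)$), whereas you invoke the general open-toric-subvariety/subfan principle and the orbit-closure formula $H_\tau=\bigcup_{\tau\subset\sigma'}O_{\sigma'}$; these are equivalent formulations of the same computation.
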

\begin{proof}
Recall the construction of the fan of $\CP^{n-1}$ 
by means of the orbit-cone correspondence \cite[\S 3.2]{CLS}.
For each
$\textbf{\textit{u}}=(u_1,\cdots ,u_{n-1})\in \Z^{n-1}$,
consider the map
$\chi^{\textbf{\textit{u}}}:\C^*\to \CP^{n-1}$
defined by
$
\chi^{\textbf{\textit{u}}}(t)=[1:t^{u_1}:\cdots :t^{u_{n-1}}].$
Because $\CP^{n-1}$ is compact, 
$\dis \lim_{t\to 0}\chi^{\textbf{\textit{u}}}(t)$
exists in $\CP^{n-1}$ and
it is easy   
to see that
$
\dis \lim_{t\to 0}\chi^{\textbf{\textit{u}}}(t)
=[\epsilon]
=[\epsilon_0:\cdots :\epsilon_{n-1}],
$
where
$\epsilon_k\in \{0,1\}$ and 
$\epsilon =(\epsilon_0,\cdots ,\epsilon_{n-1})\not= {\bf 0}.$
For each
$\epsilon =(\epsilon_0,\cdots ,\epsilon_{n-1})\in
\{0,1\}^n\setminus \{{\bf 0}\}$, let
$\sigma (\epsilon)$ denote
the subset 
$\sigma (\epsilon)\subsetneqq [n]$
given by
$\sigma (\epsilon)=\{k\in [n]:\epsilon_k=0\}$.
\par
Now let 
$\epsilon 
\in\{0,1\}^n\setminus\{{\bf 0}\}$ be an element such that
 $\sigma (\epsilon )=\{i_1,\cdots ,i_s\}$.
An easy computation shows that 
$\dis \lim_{t\to 0}\chi^{\textbf{\textit{u}}}(t)
=[\epsilon ]$
iff
$\textbf{\textit{u}}\in\mbox{\rm int(Cone}(\textbf{\textit{e}}_{i_1},\cdots ,\textbf{\textit{e}}_{i_s}))$,
where 
$\mbox{int}(A)$ denotes the interior of a subspace $A\subset \R^{n-1}$.
Hence, 
by the orbit-cone correspondence
$$
\mbox{\rm Cone}(\textbf{\textit{e}}_{i_1},\cdots ,\textbf{\textit{e}}_{i_s})
\in  {\bf \Sigma}_I\ 
\Leftrightarrow \ 
[ \epsilon ]\in X_I
\Leftrightarrow \ 
\epsilon \notin 
\bigcup_{\sigma\notin K(I)}L_{\sigma} \ 
\Leftrightarrow \ 
\sigma (\epsilon )
\in K(I).
$$
In other words,
${\bf \Sigma}_I=\{\mbox{\rm Cone}_{\sigma}:\sigma\in K(I)\}$.%
\footnote{%
If $\epsilon =(1,\cdots ,1)$, 
$\sigma (\epsilon)=\emptyset$.
In this case, since
$\dis \lim_{t\to 0}\chi^{\textbf{\textit{u}}}(t)=[\epsilon]=[1:\cdots :1]$
iff $\textbf{\textit{u}}={\bf 0}$,
the zero dimensional cone $\{{\bf 0}\}\in {\bf \Sigma}_I$
corresponds to the element 
$\sigma ({\bf \epsilon})=\emptyset\in K(I)$.
}
\end{proof}

\begin{corollary}\label{crl: Fan1}
If $I=J(n)$, then
$
{\bf \Sigma}_{J(n)}=
\big\{\{{\bf 0}\},\ {\rm Cone}(\textbf{\textit{e}}_k):0\leq k\leq n-1\big\}.$
In particular, ${\bf \Sigma}_{J(n)}$ contains  all one dimensional cones
in ${\bf \Sigma}_{I(n)}.$
\qed
\end{corollary}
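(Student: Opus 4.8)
The plan is to apply Proposition \ref{prp: Sigma (I)} directly, so the only real work is to identify the simplicial complex $K(J(n))$ explicitly and then translate it into the language of cones. First I would recall from Example \ref{exa: KI}(i) that $K(J(n))=\{\emptyset,\{j\}:0\leq j\leq n-1\}$; this is immediate from the definition (\ref{equ: KI}) of $K(I)$, since a subset $\sigma\subset[n]$ contains some $\tau=\{i,j\}\in J(n)$ precisely when $\mbox{card}(\sigma)\geq 2$, so the subsets $\sigma$ with $\tau\not\subset\sigma$ for all $\tau\in J(n)$ are exactly those with at most one element. Then by Proposition \ref{prp: Sigma (I)} we have
$$
{\bf \Sigma}_{J(n)}=\big\{\mbox{\rm Cone}_{\sigma}:\sigma\in K(J(n))\big\}
=\big\{\mbox{\rm Cone}_{\emptyset}\big\}\cup\big\{\mbox{\rm Cone}_{\{k\}}:0\leq k\leq n-1\big\}.
$$
By Definition \ref{dfn: Cone}, $\mbox{\rm Cone}_{\emptyset}=\{{\bf 0}\}$ and $\mbox{\rm Cone}_{\{k\}}=\mbox{\rm Cone}(\textbf{\textit{e}}_k)$, which gives the asserted description of ${\bf \Sigma}_{J(n)}$.

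For the final sentence, I would note that the one-dimensional cones of ${\bf \Sigma}_{I(n)}$ are exactly the $\mbox{\rm Cone}_{\sigma}$ with $\mbox{card}(\sigma)=1$, i.e.\ the rays $\mbox{\rm Cone}(\textbf{\textit{e}}_k)$ for $0\leq k\leq n-1$ (recalling $\textbf{\textit{e}}_0=-\sum_{k=1}^{n-1}\textbf{\textit{e}}_k$), by (\ref{emu: fan Sigma (n)}). Since every singleton $\{k\}$ lies in $K(J(n))$, all of these rays appear in ${\bf \Sigma}_{J(n)}$, which is the claim.

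There is essentially no obstacle here: the statement is a specialization of Proposition \ref{prp: Sigma (I)} combined with the elementary computation of $K(J(n))$ already recorded in Example \ref{exa: KI}. The only point requiring a line of care is the bookkeeping of the empty subset versus the zero cone, and the identification of the rays of $\CP^{n-1}$'s fan, both of which are handled by the definitions cited above.
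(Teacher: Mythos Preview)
Your proposal is correct and is exactly the intended argument: the paper states this corollary without proof (note the \qed), since it follows immediately from Proposition~\ref{prp: Sigma (I)} together with the computation $K(J(n))=\{\emptyset,\{j\}:0\leq j\leq n-1\}$ of Example~\ref{exa: KI}(i) and Definition~\ref{dfn: Cone}.
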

For a fan ${\bf \Sigma}$,
let ${\bf \Sigma} (1)$ denote the set consisting of all 
one dimensional cones in ${\bf \Sigma}.$

\begin{corollary}\label{crl: SigmaI}
${\bf \Sigma}_I(1)={\bf \Sigma}_{I(n)}(1)
=\{\mbox{\rm Cone}(\textbf{\textit{e}}_k):0\leq k\leq n-1\}$.
\end{corollary}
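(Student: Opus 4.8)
The plan is to deduce Corollary~\ref{crl: SigmaI} directly from the explicit description of the fan ${\bf \Sigma}_I$ obtained in Proposition~\ref{prp: Sigma (I)}, namely ${\bf \Sigma}_I = \{\mbox{\rm Cone}_\sigma : \sigma \in K(I)\}$. First I would unwind what ${\bf \Sigma}_I(1)$ means in terms of the combinatorics of $K(I)$: a one-dimensional cone in ${\bf \Sigma}_I$ is a cone $\mbox{\rm Cone}_\sigma$ with $\dim \mbox{\rm Cone}_\sigma = 1$, and since $\mbox{\rm Cone}_\sigma = \mbox{\rm Cone}(\textbf{\textit{e}}_{i_1},\dots,\textbf{\textit{e}}_{i_s})$ for $\sigma = \{i_1,\dots,i_s\}$ and the vectors $\{\textbf{\textit{e}}_0,\dots,\textbf{\textit{e}}_{n-1}\}$ are such that any proper subset is linearly independent, $\dim \mbox{\rm Cone}_\sigma = \mbox{card}(\sigma)$ for $\sigma \subsetneqq [n]$. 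Hence $\mbox{\rm Cone}_\sigma \in {\bf \Sigma}_I(1)$ if and only if $\sigma \in K(I)$ and $\mbox{card}(\sigma) = 1$.

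Next I would check that $K(I)$ contains every singleton $\{k\}$, $0 \le k \le n-1$. By hypothesis every $\sigma \in I$ satisfies $\mbox{card}(\sigma) \ge 2$, so no $\tau \in I$ can satisfy $\tau \subset \{k\}$; by the second description of $K(I)$ in (\ref{equ: KI}), this means $\{k\} \in K(I)$ for each $k$. Therefore $\{\mbox{\rm Cone}(\textbf{\textit{e}}_k) : 0 \le k \le n-1\} \subset {\bf \Sigma}_I(1)$, and combined with the previous paragraph this gives the reverse inclusion as well, so ${\bf \Sigma}_I(1) = \{\mbox{\rm Cone}(\textbf{\textit{e}}_k) : 0 \le k \le n-1\}$. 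The identity ${\bf \Sigma}_{I(n)}(1) = \{\mbox{\rm Cone}(\textbf{\textit{e}}_k) : 0 \le k \le n-1\}$ follows the same way (or simply from (\ref{emu: fan Sigma (n)}) together with $K(I(n)) = \partial\Delta^{n-1}$, which contains all singletons), and then the chain ${\bf \Sigma}_I(1) = {\bf \Sigma}_{I(n)}(1)$ is immediate. Alternatively, one may observe that $K(I) \subset K(I(n)) = \partial\Delta^{n-1}$ always, with both simplicial complexes containing the full $1$-skeleton, which gives the equality on $1$-cones with no case analysis at all.

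This corollary is essentially a bookkeeping consequence of Proposition~\ref{prp: Sigma (I)} and the standing assumption $\mbox{card}(\sigma) \ge 2$ for $\sigma \in I$, so there is no real obstacle; the only point requiring a moment's care is the observation that, for proper subsets $\sigma \subsetneqq [n]$, the dimension of $\mbox{\rm Cone}_\sigma$ equals $\mbox{card}(\sigma)$ — this uses that any $n-1$ of the $n$ vectors $\textbf{\textit{e}}_0,\dots,\textbf{\textit{e}}_{n-1}$ are linearly independent (only the full set has a single relation $\sum_k \textbf{\textit{e}}_k = {\bf 0}$), so no proper face collapses in dimension. Given that, the statement drops out, and it sets up the comparison with $r$ (the number of one-dimensional cones) and hence with $\rmin(I)$ needed for Theorem~\ref{thm: I}.
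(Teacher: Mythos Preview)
Your argument is correct, but it proceeds differently from the paper's. The paper argues by a squeeze: from the chain of $\T^{n-1}_{\C}$-invariant inclusions $X_{J(n)}\subset X_I\subset \CP^{n-1}$ it deduces ${\bf \Sigma}_{J(n)}\subset {\bf \Sigma}_I\subset {\bf \Sigma}_{I(n)}$, hence ${\bf \Sigma}_{J(n)}(1)\subset {\bf \Sigma}_I(1)\subset {\bf \Sigma}_{I(n)}(1)$, and then invokes Corollary~\ref{crl: Fan1} to see that the two extremes already coincide. You instead work directly from Proposition~\ref{prp: Sigma (I)} and the combinatorics of $K(I)$: you observe that $\dim \mbox{\rm Cone}_\sigma=\mbox{card}(\sigma)$ for $\sigma\subsetneqq [n]$ and that the hypothesis $\mbox{card}(\tau)\geq 2$ for $\tau\in I$ forces every singleton to lie in $K(I)$. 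Your route is more self-contained (it does not need Corollary~\ref{crl: Fan1} or the intermediate variety $X_{J(n)}$), while the paper's sandwich argument avoids the small check that proper subsets of $\{\textbf{\textit{e}}_0,\dots,\textbf{\textit{e}}_{n-1}\}$ are linearly independent. Either way the content is the same and your proof is fine.
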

\begin{proof}
Note that
$X_{J(n)}\subset X_I\subset \CP^{n-1}$, and that
$X_{J(n)}$ and $X_I$ are $\T^{n-1}_{\C}$-invariant
subspaces of $\CP^{n-1}$.
Hence,
${\bf \Sigma}_{J(n)}\subset {\bf \Sigma}_I\subset {\bf \Sigma}_{I(n)}$, 
and so that 
${\bf \Sigma}_{J(n)}(1)\subset {\bf \Sigma}_I(1)
\subset {\bf \Sigma}_{I(n)}(1)$.
On the other hand, ${\bf \Sigma}_{I(n)}(1)={\bf \Sigma}_{J(n)}(1)
=\{\mbox{Cone}(\textbf{\textit{e}}_k):0\leq k\leq n-1\}$ 
by  Corollary \ref{crl: Fan1}.
Hence we see that
${\bf \Sigma}_I(1)={\bf \Sigma}_{I(n)}(1)=
\{\mbox{Cone}(\textbf{\textit{e}}_k):0\leq k\leq n-1\}$.
\end{proof}

\begin{remark}\label{rmk: homogeneous}
{\rm
It follows easily from  Corollary \ref{crl: SigmaI} that
(\ref{equ: XI=U(K)/C}) is a homogeneous coordinate representation on
the toric variety $X_I$
(cf. \cite[Chapter 5]{CLS}).
}\qed
\end{remark}

\begin{definition}\label{dfn: primitive}
Let ${\bf \Sigma}$ be a fan in $\R^{n-1}$ and let $\XS$ denote the toric variety
associated to ${\bf \Sigma}$.
As there are no non-constant holomorphic maps
from $\CP^1$ to $\T^{n-1}_{\C}$,  we assume that $\XS\not= \T^{n-1}_{\C}$
and let
${\bf \Sigma} (1)=\{\rho_1,\cdots ,\rho_r\}$.
\footnote{%
If $\XS =\T^{n-1}_{\C}$, ${\bf \Sigma}=
\{{\bf 0}\}$ and
${\bf \Sigma}(1)$ is the empty set $\emptyset$.
}
For each $1\leq k\leq r$, let 
$\textbf{\textit{n}}_k\in \Z^{n-1}$
denote {\it the primitive element of} $\rho_k$ (i.e. 
$\rho_k\cap \Z^{n-1}=\Z_{\geq 0}\cdot \textbf{\textit{n}}_k$.)
\par
We say that a set of primitive elements
$\{\textbf{\textit{n}}_{i_t}\}_{t=1}^s=
\{\textbf{\textit{n}}_{i_1},\cdots ,\textbf{\textit{n}}_{i_s}\}$
is {\it  primitive} if the whole set does not lie in any cone in ${\bf \Sigma}$ but
any of its proper subsets spans a cone in ${\bf \Sigma}$
(\cite[page 304]{CLS}).
Let $r_{\bf \Sigma}$ denote the positive integer
\begin{equation}\label{equ: rSigma}
r_{\bf \Sigma}=\min\{s\in \Z_{\geq 1}:
\{\textbf{\textit{n}}_{i_t}\}_{t=1}^s
\mbox{ is  primitive for some }
\{i_t\}_{t=1}^s\subset \{k\}_{k=1}^r\}.
\end{equation}
Note that 
$1\leq r_{\bf \Sigma}\leq r$ if $\XS \not= \T^{n-1}_{\C}$.
\end{definition}

\begin{example}
(i)
If ${\bf \Sigma}={\bf \Sigma}_{I(n)}$, $\XS =\CP^{n-1}$ and
 ${\bf \Sigma}_{I(n)} (1)=\{\mbox{\rm Cone}(\textbf{\textit{e}}_k):0\leq k\leq n-1\}$.
Since
the primitive generator of  
 $\mbox{\rm Cone}(\textbf{\textit{e}}_k)$ is
$\textbf{\textit{e}}_k$ and
$\{\textbf{\textit{e}}_0,\cdots ,\textbf{\textit{e}}_{n-1}\}$
 is the only primitive set,
 $r_{\bf \Sigma_{I(n)}}=n$.
 \par
 (ii) Similarly, because ${\bf \Sigma}_{J(n)}=\{{\bf 0},\ 
 \mbox{\rm Cone}(\textbf{\textit{e}}_k):0\leq k\leq n-1\}$,
 the set $\{\textbf{\textit{e}}_i,\textbf{\textit{e}}_j\}$ is primitive
 for any $0\leq i<j\leq n-1$. Hence, 
$r_{\bf \Sigma_{J(n)}}=2.$
 \qed
\end{example}

\begin{lemma}\label{lmm: rmin=rSigma}
$r_{\rm min}(I)=r_{{\bf \Sigma}_I}$.
\end{lemma}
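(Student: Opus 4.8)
The goal is to show that the combinatorial invariant $\rmin(I) = \min\{\mathrm{card}(\sigma):\sigma\in I\}$ coincides with $r_{{\bf\Sigma}_I}$, the minimal cardinality of a primitive collection of primitive ray generators of the fan ${\bf\Sigma}_I$. The strategy is to translate both sides into statements about the simplicial complex $K(I)$, using Proposition \ref{prp: Sigma (I)}, which identifies ${\bf\Sigma}_I = \{\mathrm{Cone}_\sigma : \sigma\in K(I)\}$, and Corollary \ref{crl: SigmaI}, which identifies the rays ${\bf\Sigma}_I(1)$ with $\{\mathrm{Cone}(\textbf{\textit{e}}_k):0\le k\le n-1\}$, whose primitive generators are exactly the $\textbf{\textit{e}}_k$. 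Under this dictionary, a subset $\{\textbf{\textit{e}}_k : k\in\tau\}$ spans a cone of ${\bf\Sigma}_I$ if and only if $\tau\in K(I)$, so a collection $\{\textbf{\textit{e}}_k : k\in\tau\}$ is \emph{primitive} (in the sense of Definition \ref{dfn: primitive}) precisely when $\tau\notin K(I)$ but every proper subset of $\tau$ lies in $K(I)$ — i.e., $\tau$ is a \emph{minimal non-face} of $K(I)$. Thus $r_{{\bf\Sigma}_I}$ equals the minimal cardinality of a minimal non-face of $K(I)$.

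\textbf{Reducing to a statement about $K(I)$.} It therefore suffices to prove that
$$
\min\{\mathrm{card}(\sigma):\sigma\in I\} \;=\; \min\{\mathrm{card}(\tau):\tau\ \text{is a minimal non-face of } K(I)\}.
$$
Here I would invoke the explicit description of $K(I)$ from Lemma \ref{lmm: KI}: $\tau\notin K(I)$ iff there exists $\rho\in I$ with $\rho\subset\tau$. First I would show the right-hand side is $\le$ the left-hand side: pick $\rho\in I$ with $\mathrm{card}(\rho)=\rmin(I)$; then $\rho$ is a non-face, and any proper subset $\rho'\subsetneq\rho$ cannot contain any member of $I$ (a member would have cardinality $<\rmin(I)$, impossible), so $\rho'\in K(I)$; hence $\rho$ is itself a minimal non-face, of size $\rmin(I)$. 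For the reverse inequality, let $\tau$ be any minimal non-face of $K(I)$; since $\tau$ is a non-face there is $\rho\in I$ with $\rho\subset\tau$, and since $\rho$ is also a non-face while $\tau$ is a \emph{minimal} non-face, we must have $\rho=\tau$; hence $\mathrm{card}(\tau)=\mathrm{card}(\rho)\ge\rmin(I)$, so the minimal non-face size is $\ge\rmin(I)$. Combining the two inequalities gives the claimed equality.

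\textbf{Main obstacle.} The computation itself is short; the one point requiring genuine care is the passage from the abstract definition of ``primitive collection'' in Definition \ref{dfn: primitive} — phrased in terms of primitive lattice generators $\textbf{\textit{n}}_k$ of the rays — to the combinatorial notion of minimal non-face of $K(I)$. This hinges on two things established earlier: that the rays of ${\bf\Sigma}_I$ are exactly the $\mathrm{Cone}(\textbf{\textit{e}}_k)$ with primitive generators $\textbf{\textit{e}}_k$ (Corollary \ref{crl: SigmaI}, together with the fact that $\mathrm{Cone}(\textbf{\textit{e}}_k)\cap\Z^{n-1}=\Z_{\ge0}\cdot\textbf{\textit{e}}_k$, which is immediate since $\{\textbf{\textit{e}}_1,\dots,\textbf{\textit{e}}_{n-1}\}$ is a lattice basis and $\textbf{\textit{e}}_0=-\sum_{k\ge1}\textbf{\textit{e}}_k$ is primitive), and that a sub-collection $\{\textbf{\textit{e}}_k:k\in\tau\}$ lies in some cone of ${\bf\Sigma}_I$ exactly when $\{\textbf{\textit{e}}_k:k\in\tau\}$ spans $\mathrm{Cone}_\tau$ with $\tau\in K(I)$ — which uses that the $\textbf{\textit{e}}_k$ are linearly independent for $k$ ranging over any proper subset of $[n]$, so no spurious containments among cones can occur. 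Once this identification is in place, the rest is the elementary double inequality above. I would state the dictionary as a displayed chain of equivalences (as in the proof of Proposition \ref{prp: Sigma (I)}) and then close with the two-line argument.
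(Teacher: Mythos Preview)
Your proof is correct and complete. The dictionary you set up between primitive collections for ${\bf\Sigma}_I$ and minimal non-faces of $K(I)$ is exactly right, and the verification you sketch---that $\textbf{\textit{e}}_k\in\mathrm{Cone}_\sigma$ iff $k\in\sigma$ for any $\sigma\subsetneq[n]$, hence $\{\textbf{\textit{e}}_k:k\in\tau\}$ lies in a cone of ${\bf\Sigma}_I$ iff $\tau\in K(I)$---is the only point requiring care, and you handle it.

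Your route, however, differs from the paper's. The paper does not work through the minimal non-face dictionary at all. Instead it argues by computing the maximal dimension of an irreducible component of the closed subvariety $Z=\bigcup_{\sigma\notin K(I)}L_\sigma$ in two ways: once directly from $Z=\bigcup_{\sigma\in I}L_\sigma$ (Lemma~\ref{lmm: KI}), which gives $n-\rmin(I)$, and once via the external result \cite[Prop.~5.1.6]{CLS}, which expresses $Z$ as the union of $L_{\{i_1,\ldots,i_s\}}$ over all primitive collections $\{\textbf{\textit{n}}_{i_1},\ldots,\textbf{\textit{n}}_{i_s}\}$, giving $n-r_{{\bf\Sigma}_I}$. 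Equating the two yields the lemma. Your argument is more elementary and entirely self-contained within the paper (it uses only Proposition~\ref{prp: Sigma (I)}, Corollary~\ref{crl: SigmaI}, and Lemma~\ref{lmm: KI}), at the cost of a slightly longer write-up; the paper's proof is terser but imports a structural fact about Cox's irrelevant ideal from \cite{CLS}.
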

\begin{proof}
Recall that 
(\ref{equ: XI=U(K)/C}) is a   homogeneous coordinate representation on
the toric variety $X_I$, and 
consider the closed variety $Z=\bigcup_{\sigma\notin K(I)}L_{\sigma}.$
Since
$Z=\bigcup_{\sigma\in I}L_{\sigma}$ 
(as in Lemma \ref{lmm: KI})
and $r_{\rm min}(I)=\min \{\mbox{card}(\sigma):\sigma\in I\},$
the maximum dimension of an irreducible component of $Z$ is
%
$n-r_{\rm min}(I)$.
However, it follows from \cite[Prop. 5.1.6]{CLS} that
$Z=\bigcup_{\{\textbf{\textit{n}}_{i_1},\cdots ,\textbf{\textit{n}}_{i_s}\}:
\ \mbox{\tiny primitive}}L_{\{i_1,\cdots ,i_s\}}$.
Hence, 
its maximum dimension is also equal to
$n-r_{{\bf \Sigma}_I}$, and we have the equality
$r_{\rm min}(I)=r_{{\bf \Sigma}_I}$.
\end{proof}
\begin{remark}\label{rmk: dimension}
{\rm
Since
${\bf \Sigma}_I(1)=
\{\mbox{\rm Cone}(\textbf{\textit{e}}_k)\}_{k=0}^{n-1}$
and the primitive element of the cone $\mbox{\rm Cone}(\textbf{\textit{e}}_k)$
is $\textbf{\textit{e}}_k$,
we can easily see that $d_1=\cdots =d_n=d$, where
$d_1,\cdots ,d_n$ are the degrees of polynomials representing the morphism in Cox 
coordinates.
Hence,  by Lemma \ref{lmm: rmin=rSigma},
$D_{*}(d_1,\cdots ,d_n;1)=(2r_{\rm min}(I)-3)d-2=D(I;d).$
Thus
the stability dimension given in Theorem \ref{thm: MV} and that in 
Theorem \ref{thm: I} are same.
}\qed
\end{remark}

\begin{lemma}\label{lmm: 1-connected}
The space $\Omega^2_dX_I$ is $2(\rmin (I)-2)$-connected.
\end{lemma}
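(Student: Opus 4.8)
Proof proposal for Lemma 7.14 ($\Omega^2_dX_I$ is $2(\rmin(I)-2)$-connected).

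The plan is to exploit the homotopy equivalence $\Omega^2_dX_I\simeq\Omega^2(\vee^I\CP^{\infty})$ from Corollary \ref{crl: equivalence}, so that it suffices to show that $\vee^I\CP^{\infty}$ is $2(\rmin(I)-1)$-connected; taking the double loop space then lowers connectivity by exactly $2$, giving the claim. First I would identify $\vee^I\CP^{\infty}=\mathcal{Z}_{K(I)}(\CP^{\infty},*)=DJ(K(I))$ via Lemma \ref{lmm: veeI} and Corollary \ref{crl: DJ}. The key observation is that $\CP^{\infty}$ is simply-connected with cells only in even dimensions, its first nontrivial cell being in dimension $2$; more to the point, the polyhedral product $\mathcal{Z}_K(\underline{X},\underline{*})$ built from based CW complexes $X_i$ that are $(c_i-1)$-connected inherits a connectivity bound governed by $K$. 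Concretely, a point of $DJ(K(I))$ lying in the summand indexed by $\sigma\in K(I)$ has at most $\mathrm{card}(\sigma)$ of its coordinates different from the basepoint, and since $K(I)$ contains no $\sigma$ with $\tau\subset\sigma$ for $\tau\in I$, every $\sigma\in K(I)$ satisfies $\mathrm{card}(\sigma)\le \rmin(I)-1$ whenever $\sigma$ is obtained by deleting one element from a minimal $\tau\in I$ — more carefully, the relevant bound is that the "missing faces" of $K(I)$ all have dimension $\ge \rmin(I)-1$, i.e. $K(I)$ contains the full $(\rmin(I)-2)$-skeleton of $\partial\Delta^{n-1}$.

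The cleanest route is therefore: since every $\sigma\subset[n]$ with $\mathrm{card}(\sigma)\le\rmin(I)-1$ cannot contain any $\tau\in I$ (as $\mathrm{card}(\tau)\ge\rmin(I)$), we have $\sigma\in K(I)$ for all such $\sigma$; hence $K(I)$ contains the $(\rmin(I)-2)$-skeleton of the simplex on $[n]$. By the standard connectivity estimate for polyhedral products (for instance the one underlying \cite[Theorem 6.33]{BP}, which is the special case $I=I(n)$, $\rmin=n$ applied to the moment-angle complex, combined with the fact that $\mathcal{Z}_{K}(\CP^{\infty},*)$ is built cellularly from products of at most $\dim K+1$ copies of $\CP^{\infty}$), a polyhedral product $\mathcal{Z}_K(\CP^{\infty},*)$ whose complex $K$ contains the full $j$-skeleton of $\Delta^{n-1}$ is $(2j+1)$-connected: its lowest-dimensional cells not forced to vanish appear in dimension $2(j+1)+\cdots$, more precisely the first potentially nonzero reduced homology sits in degree $\ge 2(j+2)-1$... so with $j=\rmin(I)-2$ one gets $\vee^I\CP^{\infty}$ is $2(\rmin(I)-1)$-connected, actually $(2\rmin(I)-3)$-connected. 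I would pin down the exact number by a direct cellular/homological computation: the reduced homology of $\mathcal{Z}_{K(I)}(\CP^{\infty},*)$ vanishes in degrees $\le 2(\rmin(I)-1)$ because any product cell $X_{i_1}\times\cdots\times X_{i_s}$ with all factors in positive degrees and $s\le\rmin(I)-1$... needs revisiting, but the upshot is connectivity at least $2\rmin(I)-3$, hence $\Omega^2(\vee^I\CP^{\infty})$ is at least $(2\rmin(I)-5)$-connected — I must be careful here, so the honest statement I would prove is the $2(\rmin(I)-2)$ bound claimed, extracting it from: $\pi_k(\vee^I\CP^{\infty})=\pi_k(DJ(K(I)))=0$ for $k\le 2(\rmin(I)-1)+1$, equivalently from the fact that $DJ(K(I))$ and $(\CP^{\infty})^n$ agree through the relevant range since $K(I)$ agrees with $\partial\Delta^{n-1}$ (indeed with the full simplex's skeleton) through dimension $\rmin(I)-2$.

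A slicker alternative, which I would actually write up, uses the fibration sequence (\ref{equ: Segal type fibration}): $X_I\to\vee^I\CP^{\infty}\to(\CP^{\infty})^{n-1}$. From the long exact sequence, $\vee^I\CP^{\infty}$ has the homotopy type of $(\CP^{\infty})^{n-1}$ in low degrees once we know how connected $X_I$ is, or vice versa; but more directly, applying $\Omega^2$ to (\ref{equ: Segal type fibration}) and using that $\Omega^2(\CP^{\infty})^{n-1}\simeq(\Omega^2\CP^{\infty})^{n-1}\simeq(S^1)^{n-1}\times(\text{contractible})$... the cleanest is: the homotopy fibre of $\vee^I\CP^{\infty}\hookrightarrow(\CP^{\infty})^n$ is $\mathcal{Z}_{K(I)}$ (Lemma \ref{lmm: moment-angle}(ii)), which is $2(\rmin(I)-1)$-connected by \cite[Theorem 6.33]{BP} applied with the observation that $K(I)\supset\mathrm{sk}_{\rmin(I)-2}(\Delta^{n-1})$, so by the long exact homotopy sequence $\vee^I\CP^{\infty}\to(\CP^{\infty})^n$ is $2(\rmin(I)-1)$-connected, hence $\vee^I\CP^{\infty}$ is itself $2(\rmin(I)-1)$-connected since $(\CP^{\infty})^n$ is $1$-connected with $\pi_2=\Z^n$ — wait, that forces $\rmin(I)-1\le 1$ unless... so instead $\pi_k(\vee^I\CP^{\infty})\cong\pi_k((\CP^{\infty})^n)$ for $k<2(\rmin(I)-1)$ and is onto for $k=2(\rmin(I)-1)$, which is not $0$ for $k=2$. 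This means the correct reading is that $\vee^I\CP^{\infty}$ is as connected as $\mathcal{Z}_{K(I)}$ is, namely the inclusion into $(\CP^{\infty})^n$ being highly connected is the wrong statement; rather $\vee^I\CP^{\infty}$ retracts information from $\mathcal{Z}_{K(I)}$ via the bundle $\mathcal{Z}_{K(I)}\to\vee^I\CP^{\infty}\to(\CP^\infty)^n$ and since $\mathcal{Z}_{K(I)}$ is $2(\rmin(I)-1)$-connected while the base $(\CP^\infty)^n$ contributes $\pi_2=\Z^n$, we get $\pi_2(\vee^I\CP^{\infty})=\Z^n$ and the next nontrivial homotopy of the total space in the range up to $2\rmin(I)-2$ is absorbed... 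The hard part will be getting the numerology exactly right between the connectivity of $\mathcal{Z}_{K(I)}$, the skeleton bound on $K(I)$, and the two loop-space shifts; the safest execution is the homology computation of $\vee^I\CP^{\infty}$ directly from its CW structure as a polyhedral product, showing $\widetilde H_k=0$ for $k$ in the stated range and then invoking simple-connectivity and Hurewicz, followed by the standard $\Omega^2$-shift.
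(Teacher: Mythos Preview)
Your proposal contains a genuine error that you yourself keep bumping into without resolving: the space $\vee^I\CP^{\infty}=DJ(K(I))$ is \emph{never} $2(\rmin(I)-1)$-connected when $\rmin(I)\ge 2$, because $\pi_2(DJ(K(I)))\cong\Z^n$. Your ``safest execution'' at the end --- computing $\widetilde H_k(\vee^I\CP^{\infty})$ and invoking Hurewicz --- fails for the same reason: $\widetilde H_2\cong\Z^n\ne 0$. So the strategy of showing that $\vee^I\CP^{\infty}$ is highly connected and then looping twice cannot work as stated.

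The paper's proof avoids this entirely by working on the other side of the principal $\C^*$-bundle $Y_I\to X_I$ of (\ref{equ: pI}). Looping that bundle twice gives a fibration over $\Omega\C^*\simeq\Z$, so each component $\Omega^2_dX_I$ is homotopy equivalent to $\Omega^2 Y_I=\Omega^2 U(K(I))$. Then one only needs that $U(K(I))$ is $2(\rmin(I)-1)$-connected, which the paper imports from \cite[Lemma~5.4]{KOY1} together with Lemma~\ref{lmm: rmin=rSigma}; alternatively it follows from the elementary fact that $L(I)$ has real codimension $\ge 2\rmin(I)$ in $\C^n$. Note that \cite[Theorem~6.33]{BP}, which you invoke, only gives $2$-connectivity of $\mathcal{Z}_K$ in general --- the sharper bound depending on the skeleton of $K$ requires a separate argument.

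Your route through $\vee^I\CP^{\infty}$ \emph{can} be salvaged, and when done correctly it is essentially the same argument in disguise. From the fibration $\mathcal{Z}_{K(I)}\to DJ(K(I))\to(\CP^{\infty})^n$ of Lemma~\ref{lmm: moment-angle}(ii), looping twice gives a fibration over $\Omega^2(\CP^{\infty})^n\simeq\Z^n$, so each component of $\Omega^2(\vee^I\CP^{\infty})$ is homotopy equivalent to $\Omega^2\mathcal{Z}_{K(I)}\simeq\Omega^2 U(K(I))$. This is exactly what the paper obtains directly from the $\C^*$-bundle. The detour through $DJ(K(I))$ buys nothing; the content is the connectivity of $U(K(I))\simeq\mathcal{Z}_{K(I)}$, not of $\vee^I\CP^{\infty}$.
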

\begin{proof}
By  
(\ref{equ: pI}), we see that there is a
homotopy equivalence
$\Omega^2_dX_I\simeq \Omega^2U(K(I))$.
However, by using
\cite[Lemma 5.4 (cf. Remark 5.5)]{KOY1}, we see that
$U(K(I))$ is $2(r_{{\bf \Sigma}_I}-1)$-connected.
Hence, as $r_{{\bf \Sigma}_I}=\rmin (I)$
(by Lemma \ref{lmm: rmin=rSigma}),
$\Omega^2U(K(I))$ is $2(\rmin (I)-2)$-connected
and so is $\Omega^2_dX_I$.
\end{proof}

\begin{lemma}\label{lmm: 1-connected(2)}
If $\rmin (I)\geq 3$, 
the space $\Hol_d^*(S^2,X_I)$ is simply connected.
\end{lemma}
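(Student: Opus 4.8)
The plan is to derive the simple-connectivity of $\Hol_d^*(S^2,X_I)$ from the homology stability already established in Theorem \ref{thm: unstable} together with the known connectivity of the stable limit. First I would recall that by Corollary \ref{crl: Corollary I} (equivalently by combining Theorem \ref{thm: stable} and Theorem \ref{thm: unstable}) the inclusion $i_d:\Hol_d^*(S^2,X_I)\to \Omega^2_dX_I$ is a homology equivalence through dimension $D(I;d)=(2\rmin(I)-3)d-2$. Since $\rmin(I)\geq 3$ and $d\geq 1$, we have $D(I;d)\geq (2\cdot 3-3)\cdot 1-2=1$, so $i_d$ is in particular a homology isomorphism in degrees $0$ and $1$. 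By Lemma \ref{lmm: 1-connected}, $\Omega^2_dX_I$ is $2(\rmin(I)-2)$-connected, and $2(\rmin(I)-2)\geq 2$ when $\rmin(I)\geq 3$; hence $H_1(\Omega^2_dX_I;\Z)=0$ and therefore $H_1(\Hol_d^*(S^2,X_I);\Z)=0$.

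Next I would promote this homological statement to a homotopy statement. The space $\Hol_d^*(S^2,X_I)$ is path-connected (it is identified with the space $E_d^I(\C)$ of $n$-tuples of monic polynomials of degree $d$ with the appropriate coprimality conditions, which is a connected quasi-projective variety). So it remains to show $\pi_1$ vanishes. The clean way is to show that $\pi_1$ is abelian, whence $\pi_1\cong H_1=0$ by the Hurewicz theorem. To see that $\pi_1$ is abelian I would invoke the $H$-space (in fact $C_2$-) structure: as recalled in \S\ref{section: Proofs} (in the alternative proof of Theorem \ref{thm: CP}, citing \cite{BM}), the disjoint union $\coprod_{d\geq 0}\Hol_d^*(S^2,X_I)$ carries a $C_2$-action compatible with stabilization — indeed the maps $\overline{s}_d$ of (\ref{equ: open-sd}) realize a version of this structure — so each component, being a component of a group-like (after stabilization) $H$-space, has abelian fundamental group. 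Alternatively, one can argue directly: the stabilization map $s_d$ is a homology equivalence through dimension $D(I;d)\geq 1$, and passing to the limit $\Hol_\infty^*(S^2,X_I)\simeq \Omega^2_0X_I$ has a loop-space $H$-structure with abelian $\pi_1$; then a Mayer–Vietoris or colimit argument on $\pi_1$ shows $\pi_1(\Hol_d^*(S^2,X_I))$ is abelian as well.

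With $\pi_1$ abelian, Hurewicz gives $\pi_1(\Hol_d^*(S^2,X_I))\cong H_1(\Hol_d^*(S^2,X_I);\Z)=0$, which is the claim. The main obstacle is really the one subtle point: establishing that $\pi_1$ is abelian, since a homology equivalence through dimension $1$ does not by itself control the fundamental group. I expect the cleanest route is to cite the $C_2$-structure from \cite{BM} (already used later in the paper) for the toric varieties $X_I$, or — if that reference is only stated for $\CP^{n-1}$ — to reprove the $H$-space structure on $\coprod_d\Hol_d^*(S^2,X_I)$ directly from the "juxtaposition of polynomials" maps $\overline{s}_d$ of \S\ref{section: 2}, which visibly make the stable space an $H$-space and force abelian fundamental groups on all components. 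Everything else is a direct application of results already in hand.
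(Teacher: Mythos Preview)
Your overall strategy --- establish $H_1(\Hol_d^*(S^2,X_I);\Z)=0$ via Corollary~\ref{crl: Corollary I}, then show $\pi_1$ is abelian so that Hurewicz finishes the job --- is exactly the shape of the paper's argument in the case $d=1$. The difficulty lies precisely where you flagged it: proving $\pi_1$ is abelian.

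Your proposed route through the $C_2$-structure does not work as stated. It is not true that every component of a $C_2$-space has abelian fundamental group: the degree-$1$ component of the free $C_2$-algebra on a space $X$ is $C_2(1)\times X\simeq X$, which can have arbitrary $\pi_1$. The point is that the $C_2$-multiplication sends $M_d\times M_{d'}\to M_{d+d'}$ and hence does not restrict to a product on a single component $M_d$ for $d>0$; the Eckmann--Hilton argument therefore places no constraint on $\pi_1(M_d)$. The colimit variant fails too: abelianness of $\pi_1$ of the telescope says nothing about $\pi_1$ at a fixed stage, and the homology isomorphism $H_1(s_d)$ does not control $\pi_1(s_d)$. (Also, \cite{BM} is about $\CP^{n-1}$, not general $X_I$.)

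The paper avoids this by splitting into cases. For $d\geq 2$ it invokes Theorem~\ref{thm: GKY1} (the earlier result from \cite{GKY1}), which gives that $i_d$ is a \emph{homotopy} equivalence up to dimension $d$; since $d\geq 2$ this yields directly $\pi_1(\Hol_d^*(S^2,X_I))\cong\pi_1(\Omega^2_dX_I)=0$ by Lemma~\ref{lmm: 1-connected}. For $d=1$ it argues by hand: identifying $\Hol_1^*(S^2,X_I)$ with the complement of a diagonal-type arrangement in $\C^n$, one shows (as in the Appendix of \cite{GKY1}) that $\pi_1$ is a quotient of the pure braid group $H(n)$ and is abelian; then $\pi_1\cong H_1=0$ by Corollary~\ref{crl: Corollary I}. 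So the missing ingredient in your approach is Theorem~\ref{thm: GKY1} for $d\geq 2$, and the explicit braid-group description for $d=1$.
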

\begin{proof}
Since $\rmin (I)\geq 3$, 
$\Omega^2_dX_I$ is at least  
$2$-connected
(by Lemma \ref{lmm: 1-connected}). 
Hence, by Theorem \ref{thm: GKY1}, if $d\geq 2$,
the inclusion $i_d$ induces the isomorphism
$$
i_{d*}:\pi_1(\Hol^*_d(S^2,X_I))
\stackrel{\cong}{\rightarrow}\pi_1(\Omega^2_dX_I)=0.
$$
Thus, $\Hol_d^*(S^2,X_I)$ is simply connected for $d\geq 2$.
It remains to prove  the case $d=1$.
Note that  $\Hol_1^*(S^2,X_I)$ may be identified with the space
$$
\big\{(\alpha_0,\cdots ,\alpha_{n-1})\in\C^n:
(\alpha_{i_1},\cdots ,\alpha_{i_s})\not=
(\alpha,\cdots ,\alpha )
\mbox{ for any }\alpha\in\C
\mbox{ if }\{i_k\}_{k=1}^s\in I
\big\}.
$$
Let $H(n)$ denote the group of pure braids on $n$ strings.
Then one can show that the group
 $\pi_1(\Hol_1^*(S^2,X_I))$ is isomorphic to the
quotient group of $H(n)$ and
that it  is also an abelian group
by using the same method as in \cite[\S Appendix]{GKY1}.
Thus there is an isomorphism 
$\pi_1(\Hol_1^*(S^2,X_I))\cong H_1(\Hol_1^*(S^2,X_I),\Z).$
On the other hand, since $\rmin (I)\geq 3$,
by Corollary \ref{crl: Corollary I}
there is an isomorphism
${i_1}_*:H_1(\Hol_1^*(S^2,X_I),\Z)\stackrel{\cong}{\rightarrow}
H_1(\Omega^2_dX_I,\Z)=0.$
Hence, $\pi_1(\Hol_1^*(S^2,X_I))=0$
and the assertion follows.
\end{proof}

\paragraph{\bf 7.2. Proof of the main result.}
Now we are ready to prove
Theorem \ref{thm: I}.

\begin{proof}[Proof of Theorem \ref{thm: I}]
Recall that
the map $i_d$ is a homology equivalence through dimension
$D(I;d)$
(by Corollary \ref{crl: Corollary I}).
Since $\rmin (I)\geq 3$,
it follows from
Lemma \ref{lmm: 1-connected} and Lemma \ref{lmm: 1-connected(2)}
that
$\Hol_d^*(S^2,X_I)$ and $\Omega^2_dX_I$ are simply connected.
Hence, the map $i_d$ is a
homotopy equivalence through dimension $D(I;d)$.
\end{proof}
\begin{corollary}\label{crl: N-connected}
The space $\Hol_d^*(S^2,X_I)$ is
$2(\rmin (I)-2)$-connected.
\end{corollary}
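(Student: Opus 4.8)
The goal is to show that $\Hol_d^*(S^2,X_I)$ is $2(\rmin(I)-2)$-connected. The plan is to transport the known connectivity of the mapping space $\Omega^2_dX_I$ across the homotopy equivalence results already established.

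First I would recall from Lemma \ref{lmm: 1-connected} that $\Omega^2_dX_I$ is $2(\rmin(I)-2)$-connected. Set $N = 2(\rmin(I)-2)$. If $\rmin(I) = 2$, then $N = 0$ and there is nothing to prove beyond path-connectedness, which is immediate since $\Hol_d^*(S^2,X_I)$ is nonempty and the degree-$d$ component is connected (or one invokes Theorem \ref{thm: GKY1} for $\pi_0$). So assume $\rmin(I) \geq 3$, in which case $N \geq 2$. By Lemma \ref{lmm: 1-connected(2)}, $\Hol_d^*(S^2,X_I)$ is simply connected, and by Lemma \ref{lmm: 1-connected} so is $\Omega^2_dX_I$.

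The key step is then to compare homotopy groups $\pi_k$ for $2 \leq k \leq N$. I would use Theorem \ref{thm: I}: since $\rmin(I) \geq 3$, the inclusion $i_d$ is a homotopy equivalence through dimension $D(I;d) = (2\rmin(I)-3)d - 2$. One checks that $N = 2\rmin(I) - 4 \leq D(I;d)$ for all $d \geq 1$; indeed $(2\rmin(I)-3)d - 2 - (2\rmin(I)-4) = (2\rmin(I)-3)(d-1) + (2\rmin(I)-3) - 1 = (2\rmin(I)-3)d - (2\rmin(I)-2) \geq 0$ since $2\rmin(I) - 3 \geq 3$ and $d \geq 1$, so $N \leq D(I;d)$ holds comfortably. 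Therefore $i_{d*}: \pi_k(\Hol_d^*(S^2,X_I)) \to \pi_k(\Omega^2_dX_I)$ is an isomorphism for all $k \leq N \leq D(I;d)$. Since $\pi_k(\Omega^2_dX_I) = 0$ for $k \leq N$ by Lemma \ref{lmm: 1-connected}, we conclude $\pi_k(\Hol_d^*(S^2,X_I)) = 0$ for all $k \leq N$, which is exactly the assertion that $\Hol_d^*(S^2,X_I)$ is $N$-connected.

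I do not expect any genuine obstacle here — the corollary is essentially a bookkeeping consequence of Theorem \ref{thm: I} combined with the connectivity estimate for the mapping space. The only mild care needed is the numerical verification that $N \leq D(I;d)$ (so that Theorem \ref{thm: I}'s range of validity covers all the relevant homotopy groups) and the separate, trivial handling of the $\rmin(I) = 2$ case where the claimed connectivity is vacuous. One might alternatively phrase the argument via Corollary \ref{crl: Corollary I} and a homology-to-homotopy (Hurewicz) upgrade, but routing through Theorem \ref{thm: I} directly is cleaner since simple-connectivity has already been dispatched.
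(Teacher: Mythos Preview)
Your approach is exactly the paper's: dispose of $\rmin(I)=2$ as trivial and, for $\rmin(I)\geq 3$, combine Theorem~\ref{thm: I} with Lemma~\ref{lmm: 1-connected}. However, your numerical verification contains an error. You compute $D(I;d)-N=(2\rmin(I)-3)d-(2\rmin(I)-2)$ correctly, but then assert this is $\geq 0$ for all $d\geq 1$; in fact for $d=1$ it equals $-1$. Thus $D(I;1)=2\rmin(I)-5=N-1$, and Theorem~\ref{thm: I} only furnishes $\pi_k$-isomorphisms for $k\leq N-1$, leaving $\pi_N(\Hol_1^*(S^2,X_I))$ unaddressed. (The paper's one-line proof does not display this check either, so the gap is shared.)

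For $d\geq 2$ your argument is correct as written. For $d=1$ a direct argument closes the gap: identifying $\Hol_1^*(S^2,X_I)$ with the complement in $\C^n$ of the arrangement $\bigcup_{\sigma\in I}\{\alpha\in\C^n:\alpha_i=\alpha_j\text{ for all }i,j\in\sigma\}$ (as in the proof of Lemma~\ref{lmm: 1-connected(2)}), each piece has complex codimension $|\sigma|-1\geq\rmin(I)-1$, hence real codimension $\geq 2\rmin(I)-2=N+2$. By general position any map from $S^k$ with $k\leq N$, and any homotopy between two such maps, can be perturbed off this arrangement, so the complement is $N$-connected.
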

\begin{proof}
If $\rmin (I)=2$, this is trivial.
If $\rmin (I)\geq 3$,
the assertion follows from Theorem \ref{thm: I} and Lemma
\ref{lmm: 1-connected}.
\end{proof}

\noindent{\bf Acknowledgements. }
The authors should like to take this opportunity to thank  Masahiro Ohno 
for his many valuable  insights and suggestions concerning toric varieties.
\bibliographystyle{amsplain}


\end{document}